\title[Algebraic $K$-theory over the infinite dihedral group: an algebraic approach]{Algebraic $K$-theory over the infinite dihedral group:\\ an algebraic approach}
\author[J\,F Davis]{James F Davis}
\address{Department of Mathematics, Indiana University, Bloomington, IN 47405 U.S.A.}
\email{jfdavis@indiana.edu}
\author[Q Khan]{Qayum Khan}
\address{Department of Mathematics, University of Notre Dame, Notre Dame, IN 46556 U.S.A.}
\email{qkhan@nd.edu}
\author[A Ranicki]{Andrew Ranicki}
\address{School of Mathematics, University of Edinburgh, Edinburgh EH9 3JZ, SCOTLAND, U.K.}
\email{a.ranicki@ed.ac.uk}
\newtheorem{thm}{Theorem}[section]
\newtheorem{lem}{Lemma}[section]
\newtheorem{cor}{Corollary}[section]
\newtheorem{prop}{Proposition}[section]
 \let\c@lem=\c@thm
 \let\c@cor=\c@thm
 \let\c@prop=\c@thm
\theoremstyle{definition}
\newtheorem{conj}{Conjecture}[section]
\newtheorem{dfn}{Definition}[section]
\newtheorem{rem}{Remark}[section]
\newtheorem{exm}{Example}[section]
\newtheorem{stt}{Statement}[section]
\newtheorem*{tp}{Transitivity Principle}
 \let\c@conj=\c@thm
 \let\c@dfn=\c@thm
 \let\c@rem=\c@thm
 \let\c@exm=\c@thm
 \let\c@stt=\c@thm
\newcommand{\di}{\displaystyle}
\newcommand{\N}{\mathbb{N}}
\newcommand{\cF}{\mathcal{F}}
\newcommand{\cG}{\mathcal{G}}
\newcommand{\cM}{\mathcal{M}}
\newcommand{\fB}{\mathscr{B}}
\newcommand{\fC}{\mathscr{C}}
\newcommand{\fE}{\mathscr{E}}
\newcommand{\fK}{\mathscr{K}}
\newcommand{\fM}{\mathscr{M}}
\newcommand{\bE}{\mathbf{E}}
\newcommand{\bK}{\mathbf{K}}
\DeclareMathAlphabet{\matheurm}{U}{eur}{m}{n}
\newcommand{\all}{\matheurm{all}}
\newcommand{\fac}{\matheurm{fac}}
\newcommand{\fbc}{\matheurm{fbc}}
\newcommand{\fin}{\matheurm{fin}}
\newcommand{\Spectra}{\matheurm{Spectra}}
\newcommand{\sub}{\matheurm{sub}}
\newcommand{\vc}{\matheurm{vc}}
\newcommand{\longra}{\longrightarrow}
\newcommand{\xo}{\otimes}
\newcommand{\x}{\times}
\newcommand{\ol}[1]{\overline{#1}}
\newcommand{\wt}[1]{\widetilde{#1}}
\newcommand{\SmMatrix}[1]{\left(\begin{smallmatrix}#1\end{smallmatrix}\right)}
\newcommand{\g}{\Gamma}
\newcommand{\eps}{\varepsilon}
\newcommand{\inv}{^{-1}}
\def\map{\mathop{\mathrm{map}}\nolimits}
\def\Nil{\mathop{\mathrm{Nil}}\nolimits}
\def\NIL{\mathop{\mathrm{NIL}}\nolimits}
\def\Or{\mathop{\mathrm{Or}}\nolimits}
\def\PROJ{\mathop{\mathrm{PROJ}}\nolimits}
\def\Wh{\mathop{\mathrm{Wh}}\nolimits}
\DeclareMathOperator*{\colim}{colim}
\begin{document}
\begin{abstract}

Two types of Nil-groups arise in the codimension 1 splitting obstruction theory for homotopy equivalences of finite CW-complexes: the Farrell--Bass Nil-groups in the  non-separating case when the fundamental group is an HNN extension  and the Waldhausen Nil-groups in the separating case when the fundamental group is 
an amalgamated free product. We obtain a general Nil-Nil theorem in algebraic $K$-theory relating the two types of Nil-groups.
 
The infinite dihedral group is  a free product and has an index 2 subgroup which is an HNN extension, 
so both cases arise if the fundamental group surjects onto the infinite dihedral group.   The Nil-Nil theorem  implies that the two types of the reduced $\wt{\Nil}$-groups arising from such a fundamental group  are isomorphic.
There is also  a topological application: in the finite-index case of an amalgamated free product, a homotopy equivalence of  finite CW-complexes is semi-split along a separating subcomplex.


\end{abstract}

\maketitle

\section*{Introduction}


The infinite dihedral group is both a free product and an extension
of the infinite cyclic group $\Z$ by the cyclic group  $\Z_2$
of order 2
\[
D_{\infty}~=~\Z_2*\Z_2~=~\Z\rtimes \Z_2
\]
with $\Z_2$ acting on $\Z$ by $-1$. A group $G$ is said to be \textbf{over $D_\infty$} if it is equipped with an epimorphism $p:\,G \to
D_{\infty}$. We study the algebraic $K$-theory of $R[G]$, for any
ring $R$ and any group $G$ over $D_{\infty}$. Such a group $G$
inherits from $D_{\infty}$ an injective amalgamated free product
structure $G=G_1*_HG_2$ with $H$ an index 2 subgroup of $G_1$ and
$G_2$. Furthermore, there is a canonical index 2 subgroup $\ol{G}
\subset G$ with an injective HNN structure
$\ol{G}=H\rtimes_{\alpha}\Z$ for an automorphism $\alpha:\,H\to
H$. The various groups fit into a commutative braid of short exact
sequences:
\[
\xymatrix@C-25pt{ &&\Z~\ar@{>->}[dr] && \\
& \ol{G}=H\rtimes_{\alpha}\Z\ar@{->>}[ur] \ar@{>->}[dr]^-{\di{\theta}}
&& D_{\infty}=\Z_2*\Z_2~~ \ar@{->>}[dr]^-{\di{\pi}}\\
H=G_1 \cap G_2 \ar@{>->}[ur] \ar@{>->}@/_2pc/[rr]&&
G=G_1*_HG_2\ar@{->>}[ur]^-{\di{p}}\ar@{->>}@/_2pc/[rr]^-{\di{\pi \circ p}} &&
~~\Z_2 }
\]

The algebraic $K$-theory decomposition theorems of Waldhausen for
injective amalgamated free products and HNN extensions give
\begin{equation}\label{eqn:Kamalg}
K_*(R[G])~=~K_*(R[H] \to R[G_1] \times R[G_2]) \oplus
\wt{\Nil}_{*-1}(R[H];R[G_1-H],R[G_2-H])
\end{equation}
and
\begin{equation}\label{eqn:Ktwist}
K_*(R[\ol{G}])~=~K_*(1-\alpha:\,R[H] \to R[H]) \oplus
\wt{\Nil}_{*-1}(R[H],\alpha)\oplus\wt{\Nil}_{*-1}(R[H],\alpha\inv)~.
\end{equation}
We establish isomorphisms
\[
\wt{\Nil}_*(R[H];R[G_1-H],R[G_2-H]) ~\cong~
\wt{\Nil}_*(R[H],\alpha)~\cong~\wt{\Nil}_*(R[H],\alpha\inv).
\]
\indent A homotopy equivalence $f:\,M \to X$ of
finite CW-complexes is \textbf{split along} a subcomplex $Y \subset X$ if it is a
cellular map and the restriction $f\vert:N=f\inv(Y) \to Y$ is also
a homotopy equivalence.  The $\wt{\Nil}_*$-groups arise as the
obstruction groups to splitting homotopy equivalences of finite
CW-complexes for codimension 1 $Y \subset X$ with injective
$\pi_1(Y) \to \pi_1(X)$, so that $\pi_1(X)$ is either an HNN
extension or an amalgamated free product (Farrell--Hsiang,
Waldhausen) --- see \fullref{codim1} for a brief review of the
codimension 1 splitting obstruction theory in the separating case
of an amalgamated free product.  In this paper we introduce the
considerably weaker notion of a homotopy equivalence in the
separating case being \textbf{semi-split} (\fullref{semisplit}). By way of geometric application we prove in
\fullref{Thm_TopSemisplit} that there is no obstruction to
topological semi-splitting in the finite-index case.

\subsection{Algebraic semi-splitting}

The following is a special case of our
main algebraic result (\ref{Thm_HigherDKR}, \ref{Thm_LowerDKR}) which shows that there is no obstruction to algebraic semi-splitting.

\begin{thm}\label{first}
Let $G$ be a group over $D_\infty$, with
$$H~=~G_1 \cap G_2~\subset~\ol{G}~=~H\rtimes_{\alpha}\Z \subset
G~=~G_1*_HG_2~.$$

\begin{enumerate}
\item
For any ring $R$ and $n \in \Z$ the
corresponding reduced $\Nil$-groups are isomorphic:
\[\wt{\Nil}_n(R[H];R[G_1- F],R[G_2- H])
~\cong~\wt{\Nil}_n(R[H],\alpha)~\cong~\wt{\Nil}_n(R[H],\alpha\inv)~.
\]

\item\label{first:partII}
The inclusion $\theta:\,R[\ol{G}]\to R[G]$ determines induction
and transfer maps
\[
\theta_!~:~K_n(R[\ol{G}]) \to K_n(R[G])~,~\theta^!~:~K_n(R[G]) \to
K_n(R[\ol{G}])~.
\]
For all integers $n \leqslant 1$, the $\wt{\Nil}_n(R[H],\alpha)$-$
\wt{\Nil}_n(R[H];R[G_1- H],R[G_2- H])$-components of the maps $\theta_!$ and $\theta^!$
in the decompositions \eqref{eqn:Ktwist} and \eqref{eqn:Kamalg}  are isomorphisms.
\end{enumerate}
\end{thm}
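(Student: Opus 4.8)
The plan is to compare the two Nil-groups by realizing both as summands of $K$-theory of rings attached to the same data $(R[H],\alpha)$, and then to exploit the index-2 inclusion $\theta\colon R[\ol G]\to R[G]$ to move between them. First I would set up the algebraic models carefully. On the amalgamated side, Waldhausen's splitting gives $\wt\Nil_*(R[H];R[G_1-H],R[G_2-H])$ as a summand of $K_*$ of a category of "Nil-modules" built from the $R[H]$-bimodules $R[G_1-H]$ and $R[G_2-H]$. On the HNN side, $\wt\Nil_*(R[H],\alpha)$ is the reduced Nil-group of the twisted polynomial extension $R_\alpha[H][t]$, equivalently the Nil-category of $(R[H],\alpha)$-modules with a nilpotent endomorphism compatible with the twisting. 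The key structural point, coming from the braid, is that $R[G_1-H]$ and $R[G_2-H]$ are each free of rank one as left (and right) $R[H]$-modules, and that as an $(R[H],R[H])$-bimodule the relevant tensor product $R[G_1-H]\otimes_{R[H]}R[G_2-H]$ is isomorphic to $R[H]$ with the bimodule structure twisted by $\alpha$ (coming from the generator of the $\Z$ inside $\ol G$). This is what makes the two Nil-categories equivalent on the nose after one bookkeeping step.

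Next I would carry out the comparison of categories. The Nil-category for the amalgam involves pairs of modules $(P_1,P_2)$ with maps $P_1\to R[G_2-H]\otimes P_2$ and $P_2\to R[G_1-H]\otimes P_1$ whose composites are nilpotent. I would define a functor to the twisted Nil-category of $(R[H],\alpha)$ by, essentially, composing the two structure maps to land back on $P_1$ (using the bimodule isomorphism above), obtaining a single nilpotent twisted endomorphism of $P_1$; the reverse functor splits the twisted endomorphism through the "intermediate" module $R[G_1-H]\otimes P_1$. I expect these to be mutually inverse equivalences up to natural isomorphism after passing to reduced (i.e., quotient by the image of the base-change from $K_*(R[H])$) groups, which kills the discrepancy between $P_1$ and the auxiliary module. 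Running the same argument with the roles of $G_1$ and $G_2$ reversed — equivalently, replacing the generator of $\Z$ by its inverse — gives the isomorphism with $\wt\Nil_*(R[H],\alpha\inv)$. This establishes part (1) for all $n\in\Z$ at the level of the localizing/cofiber sequences in nonconnective $K$-theory.

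For part (2), I would first recall that $\theta\colon R[\ol G]\to R[G]$ is the inclusion of the group ring of an index-2 subgroup, so induction $\theta_!$ and transfer $\theta^!$ are defined in the usual way (restriction of scalars along $\theta$ is finite and free, giving $\theta^!$; $-\otimes_{R[\ol G]}R[G]$ gives $\theta_!$). The strategy is to trace these maps through the Waldhausen decompositions \eqref{eqn:Kamalg} and \eqref{eqn:Ktwist}. Under the equivalences of categories from part (1), both $\theta_!$ and $\theta^!$ should restrict on the Nil-summands to maps induced by explicit (twisted) bimodule transformations; the composite $\theta^!\theta_!$ is multiplication by the transfer-then-induct operator, which on the Nil-part unwinds to an isomorphism because the two twisted Nil-groups for $\alpha$ and $\alpha\inv$ that appear in \eqref{eqn:Ktwist} are interchanged by the nontrivial deck transformation of $\ol G$ over $G$, and the composite $\theta_!\theta^!$ is the corresponding "$1+$(conjugation)" operator, which is an isomorphism after the identification in part (1). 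Concretely I would show the Nil-to-Nil component of $\theta_!$ is split injective with the Nil-to-Nil component of $\theta^!$ a splitting, and then that both are surjective, by a direct computation with the generating nilpotent endomorphisms. I would restrict to $n\leqslant 1$ only where needed, namely to invoke the identification of the low-dimensional $K$-groups with the Bass–Heller–Swan/Waldhausen pieces where these bimodule manipulations are literally definable; the argument for $n\leqslant 1$ avoids the subtleties of assembling the full nonconnective spectrum-level statement.

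The main obstacle I anticipate is the second step of part (1): checking that the two functors between the amalgamated Nil-category and the twisted Nil-category are genuinely inverse equivalences (not merely inducing isomorphisms on $K_0$), including keeping precise track of the left/right $R[H]$-module structures and the twisting automorphism $\alpha$ throughout the tensor-product identifications — a sign error or a transposed bimodule structure here collapses the whole comparison. A secondary difficulty is verifying that the induction and transfer maps for $\theta$ are compatible with the Waldhausen decompositions in the strong sense needed for part (2), i.e., that they do not mix the $K_*(R[H]\to R[G_1]\times R[G_2])$ summand with the Nil-summand; this should follow from naturality of Waldhausen's decomposition with respect to the relevant change-of-rings, but it requires care because $\theta$ is not a map of amalgamated products in the obvious way.
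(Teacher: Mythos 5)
Your overall route — modeling both Nil-groups via bimodule categories, noting $R[G_k-H]$ is free of rank one over $R[H]$ with $R[G_1-H]\otimes_{R[H]}R[G_2-H]\cong {}_\alpha R[H]$, defining the "compose the two structure maps" functor $j$ and the "split through the intermediate module" functor $i$, and then tracing $\theta_!,\theta^!$ through Waldhausen's decompositions — is the same as the paper's (the paper reduces part (1) to its Nil--Nil Theorem~\ref{maink}, and part (2) to Lemma~\ref{lem:induction} and Proposition~\ref{transfer}). But there is a concrete gap in your treatment of part (1). You claim $i$ and $j$ are ``mutually inverse equivalences up to natural isomorphism after passing to reduced groups, which kills the discrepancy between $P_1$ and the auxiliary module.'' That is not what happens: $j\circ i = 1$ on the nose, but $i\circ j$ is \emph{not} naturally isomorphic to the identity, nor does passing to reduced groups automatically make it so. Indeed $(i\circ j)(P_1,P_2,\rho_1,\rho_2)=(P_1,\fB_2 P_1,\rho_2\rho_1,1)$, and the map $\rho_2\colon P_2\to\fB_2 P_1$ need not be an isomorphism, so there is no natural transformation $1\Rightarrow i\circ j$ to work with. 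The actual content of the paper's Theorem~\ref{Thm_HigherDKR} is an additivity argument: one constructs an intermediate functor $x\mapsto x'=(P_1,\fB_2P_1\oplus P_2,\binom{0}{\rho_1},(1\ \rho_2))$ and two short exact sequences of exact functors
\[
0\to 1\oplus G\to F'\oplus G\to G'\to 0,\qquad 0\to G\to F'\to F''\to 0,
\]
with $F''=i\circ j$ and $G,G'$ landing in the image of $\PROJ(R[H])\times\PROJ(R[H])$; Quillen's Additivity Theorem then forces $(i\circ j)_*$ to differ from the identity by a class supported on the split-off $K_*(R[H])\oplus K_*(R[H])$ summand, so $(i\circ j)_*=1$ on reduced Nil-groups. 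Without this step the proof of part (1) is incomplete; reduced Nil-groups are quotients, and proving a map is the identity on a quotient requires exhibiting the difference in the subgroup being killed, which is exactly what the additivity argument does. (A minor point: your intermediate module should be $R[G_2-H]\otimes P_1$, not $R[G_1-H]\otimes P_1$, given the conventions $\rho_2\colon P_2\to\fB_2P_1$.)

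For part (2), your strategy of analyzing $\theta^!\theta_!$ and $\theta_!\theta^!$ as ``$1+\mathrm{conjugation}$'' operators is plausible in spirit, but it glosses over the real difficulty: the conjugation action of $G/\ol G$ on $K_n(R[\ol G])$ interchanges the two Nil-summands in \eqref{eqn:Ktwist} only up to a nontrivial \emph{scaling} isomorphism, because the automorphism induced by $t_1$ relates $\alpha^{-1}$ to $\alpha'=\alpha_1\alpha_2$ (not $\alpha$) by an inner automorphism. Making the composite-map argument work requires exactly the scaling isomorphisms $\beta_u^\pm$ (Statements~\ref{scalingG1}--\ref{scalingG3}) and the transposition $\tau_A$, and the identification of the Nil-component of $\theta_!$ with $\sigma_A\circ i_*$ and of $\theta^!$ with the $j_*$-component (the key equalities in Statement~\ref{Statement_meat} and Proposition~\ref{transfer}). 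Your worry that $\theta$ ``is not a map of amalgamated products in the obvious way'' is well placed; this bookkeeping is where the bulk of the work lies, and it is not discharged by the high-level ``$1+\mathrm{conjugation}$'' description.
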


\begin{proof}
Part~(i) is a special case of \fullref{maink}.

Part~(ii) follows from \fullref{maink}, \fullref{lem:induction}, and \fullref{transfer}.
\end{proof}

The $n=0$ case will be discussed in more detail in \fullref{exp} and \fullref{First}.

\begin{rem} We do not seriously doubt that a more assiduous application
of higher $K$-theory would extend \fullref{first}(\ref{first:partII}) to all $n \in \Z$ (see also \cite{DQR}).
\end{rem}

As an application of \fullref{first}, we shall prove the following theorem. It shows that the Farrell--Jones Isomorphism Conjecture in algebraic $K$-theory can be reduced (up to dimension one) to the family of finite-by-cyclic groups, so that virtually cyclic groups of infinite dihedral type need not be considered.

\begin{thm}\label{thm:reduction}
Let $\g$ be any group, and let $R$ be any ring.  Then, for all integers $n < 1$, the following induced map
of equivariant homology groups, with coefficients in the algebraic
$K$-theory functor $\bK_R$, is an isomorphism:
\[
H^\g_n(E_{\fbc}\g;\bK_R) \longra H^\g_n(E_{\vc}\g;\bK_R) ~.
\]
Furthermore, this map is an epimorphism for $n=1$.
\end{thm}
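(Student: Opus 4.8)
The plan is to derive \fullref{thm:reduction} from \fullref{first} together with the Transitivity Principle and Waldhausen's decompositions \eqref{eqn:Kamalg}, \eqref{eqn:Ktwist}.

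\textbf{Reduction to one virtually cyclic group.} First I would apply the Transitivity Principle to the nested families $\fbc\subseteq\vc$ of subgroups of $\g$ and to the equivariant homology theory $H^?_*(-;\bK_R)$. Since every virtually cyclic group belongs to $\vc$, the space $E_{\vc}V$ is equivariantly contractible for each virtually cyclic $V$, so the Transitivity Principle reduces the theorem to the following: for every virtually cyclic subgroup $V\leqslant\g$, the assembly map $H^V_n(E_{\fbc}V;\bK_R)\to H^V_n(\mathrm{pt};\bK_R)=K_n(R[V])$ is an isomorphism for $n<1$ and an epimorphism for $n=1$; equivalently the relative group $H^V_n(\mathrm{pt},E_{\fbc}V;\bK_R)$ vanishes for all $n\leqslant1$. (Here $E_{\fbc}V$ is the classifying space for the family of finite-by-cyclic subgroups of $V$.) An infinite virtually cyclic group is either finite-by-cyclic --- in which case $V\in\fbc$, so $E_{\fbc}V\simeq\mathrm{pt}$ and there is nothing to prove --- or it maps onto $D_\infty$ with finite kernel. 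Hence we may assume $V$ is a group over $D_\infty$ as in \fullref{first} with $H=G_1\cap G_2$ \emph{finite}, $V=G_1*_HG_2$, and $\ol V=H\rtimes_\alpha\Z$ of index $2$.

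\textbf{A model for $E_{\fbc}V$ and its homology.} For such $V$ the subgroups in $\fbc$ are exactly the finite subgroups (each conjugate into $G_1$ or $G_2$, by Bass--Serre theory) together with the infinite subgroups of $\ol V$, so I would use the join model $E_{\fbc}V\simeq_V(p^*E\Z_2)\ast T$, where $T=E_{\fin}V$ is the Bass--Serre tree of $G_1*_HG_2$ (a line, as $[G_i:H]=2$) and $p^*E\Z_2$ is the pullback, along $p\colon V\to V/\ol V=\Z_2$, of a contractible free $\Z_2$-CW complex; a direct fixed-point computation verifies this. Writing the join as the homotopy pushout of $p^*E\Z_2\leftarrow p^*E\Z_2\times T\to T$ and applying $H^V_*(-;\bK_R)$ produces a Mayer--Vietoris sequence whose inputs are: $H^V_*(T;\bK_R)=K_*(R[H]\to R[G_1]\times R[G_2])$; $H^V_*(p^*E\Z_2\times T;\bK_R)$, which is assembled from copies of $K_*(R[H])$ and carries no Nil contributions (all of its cells have stabilizers inside the finite group $H$); and $H^V_*(p^*E\Z_2;\bK_R)$, which is the output of the homotopy-orbit spectral sequence $H_p(\Z_2;K_q(R[\ol V]))\Longrightarrow H^V_{p+q}(p^*E\Z_2;\bK_R)$, where $\Z_2=V/\ol V$ acts on $K_*(R[\ol V])$ by conjugation by a coset representative.

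\textbf{Comparison with $K_*(R[V])$.} By Waldhausen, $K_*(R[V])$ decomposes as the linear group $K_*(R[H]\to R[G_1]\times R[G_2])$ together with $\wt{\Nil}_{*-1}(R[H];R[G_1-H],R[G_2-H])$, and $K_*(R[\ol V])$ as a linear group together with $\wt{\Nil}_{*-1}(R[H],\alpha)\oplus\wt{\Nil}_{*-1}(R[H],\alpha\inv)$. Conjugation by a coset representative converts the presentation $\ol V=H\rtimes_\alpha\Z$ into $H\rtimes_{\alpha\inv}\Z$, so the $\Z_2$-action interchanges the two Farrell--Bass Nil summands; their direct sum is therefore an induced $\Z[\Z_2]$-module, and it contributes only its $\Z_2$-coinvariants $\wt{\Nil}_{*-1}(R[H],\alpha)$, with no higher homotopy-orbit terms, to $H^V_*(p^*E\Z_2;\bK_R)$. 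Feeding this into the Mayer--Vietoris sequence and mapping everything to $K_*(R[V])$: the various linear contributions assemble to the linear summand of $K_*(R[V])$, while the surviving part of the comparison on Nil summands is precisely the $\wt{\Nil}_{*-1}(R[H],\alpha)$--$\wt{\Nil}_{*-1}(R[H];R[G_1-H],R[G_2-H])$-component of the induction and transfer maps $\theta_!,\theta^!$ of \fullref{first}(\ref{first:partII}), which that part of \fullref{first} declares an isomorphism in the relevant range of degrees. A diagram chase then gives $H^V_n(\mathrm{pt},E_{\fbc}V;\bK_R)=0$ for $n\leqslant1$, completing the reduction and hence the proof.

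\textbf{Main obstacle.} The crux is the comparison step: one must show that coning off the HNN subgroup $\ol V$, which carries the \emph{two} Farrell--Bass Nil groups of its two HNN presentations, produces only the \emph{single} Waldhausen Nil group of the amalgam $G_1*_HG_2$, and that the resulting identification is \emph{precisely} the isomorphism of \fullref{first}, not merely an abstract isomorphism between source and target. The conjugation $\Z_2$-action interchanging $\alpha$ and $\alpha\inv$ is exactly what removes the apparent doubling, and \fullref{first}(\ref{first:partII}) is the tool needed to pin down the map. Because that part of \fullref{first} is established only in degrees $n\leqslant1$, the five-lemma bookkeeping in the Mayer--Vietoris sequence --- which requires the Nil-component isomorphism one degree higher in order to upgrade surjectivity to bijectivity at the top degree --- yields an isomorphism only for $n<1$ and merely an epimorphism at $n=1$.
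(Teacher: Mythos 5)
Your proposal takes the same strategic skeleton as the paper — Transitivity Principle to reduce to an infinite virtually cyclic $V$ surjecting onto $D_\infty$, classification of virtually cyclic groups (Lemma~\ref{lem_vctypes}), the observation that conjugation by a coset representative swaps $\wt{\Nil}_{*-1}(R[H],\alpha)$ and $\wt{\Nil}_{*-1}(R[H],\alpha\inv)$, hence contributes only coinvariants, and Theorem~\ref{first}(\ref{first:partII}) to pin down that coinvariant map as the Nil-Nil isomorphism — but it realizes the middle step differently. Where you build the explicit join model $E_{\fbc}V \simeq_V (p^*E\Z_2) * T$ (correctly: $\sub\,\ol{V} \cup \fin = \fbc$ here, since $G_1,G_2$ are finite and every infinite finite-by-cyclic subgroup of $V$ avoids the nontrivial coset of $\ol{V}$) and runs a Mayer--Vietoris sequence plus a homotopy-orbit spectral sequence, the paper instead works with the chain of families $\fac \subset \fac \cup \sub\,\ol{V} \subset \all$, the long exact sequences of the corresponding triples, and the identifications supplied by \cite[Lemmas 4.1, 4.4, 4.6]{DQR}. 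In effect, the paper's Lemma~\ref{lem:DKR_induction} is the same coinvariant computation you are doing by hand with the join and the homotopy-orbit spectral sequence.

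The one place your argument is looser than the paper's is the final ``diagram chase.'' By computing $H^V_*(E_{\fbc}V;\bK_R)$ directly rather than the relative group $H^V_*(E_{\all}V, E_{\fac}V;\bK_R)$, you carry along all the linear pieces ($K_*(R[H])$, $K_*(R[G_i])$, the linear summand of $K_*(R[\ol{V}])$, and the nonzero higher $\Z_2$-homology of that summand) and must match them against the $T$- and $(p^*E\Z_2 \times T)$-terms of the Mayer--Vietoris sequence before the Nil part is isolated. This is doable but it is exactly the bookkeeping the paper avoids by first splitting off $H^V_*(E_{\fac}V;\bK_R)$ via Lemma~\ref{DW}(1) and working relative to $E_{\fac}V$: the relative group is \emph{exactly} the Nil part by Lemma~\ref{DW}(2), the Atiyah--Hirzebruch spectral sequence collapses because the coefficient module is $\Z[C_2]$-induced, and the five-lemma step becomes the statement that $\beta$ in the proof of Theorem~\ref{isom_conj} is an isomorphism. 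You correctly identify both the crux (the coinvariant map must be \emph{the} Nil-Nil map of Theorem~\ref{first}, not merely an abstract isomorphism of abstractly isomorphic groups — this is Proposition~\ref{prop:induction} and Lemma~\ref{lem:DKR_fbc} in the paper) and the reason for the degree restriction, so with the extra bookkeeping made explicit this is a sound alternative proof.
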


In fact, this is a special case of a more general fibered version (\ref{isom_conj}).
\fullref{thm:reduction} has been proved for all degrees $n$ in \cite{DQR}; however our proof here uses only algebraic topology, avoiding the use of controlled topology.

The original reduced $\Nil$-groups $\wt{\Nil}_*(R)
=\wt{\Nil}_*(R,\mathrm{id})$ feature in the decompositions of Bass~\cite{Bass} and Quillen~\cite{Grayson}:
\begin{eqnarray*}
K_*(R[t]) &~=~& K_*(R) \oplus \wt{\Nil}_{*-1}(R)~,\\
K_*(R[\Z]) &~=~& K_*(R) \oplus K_{*-1}(R) \oplus \wt{\Nil}_{*-1}(R)\oplus \wt{\Nil}_{*-1}(R)~.
\end{eqnarray*}
In \fullref{Applications} we shall compute several examples which require \fullref{first}:
\begin{eqnarray*}
K_*(R[\Z_2 * \Z_2]) &~=~& \frac{K_*(R[\Z_2]) \oplus K_*(R[\Z_2])}{K_*(R)} \oplus \wt{\Nil}_{*-1}(R)\\
K_*(R[\Z_2 * \Z_3]) &~=~& \frac{K_*(R[\Z_2]) \oplus K_*(R[\Z_3])}{K_*(R)}  \oplus \wt{\Nil}_{*-1}(R)^{\infty}\\
\Wh(G_0 \x \Z_2 *_{G_0} G_0 \x \Z_2) &~=~& \frac{\Wh(G_0 \x \Z_2) \oplus \Wh(G_0 \x \Z_2) }{\Wh(G_0) } \oplus \wt{\Nil}_0(\Z[G_0])
\end{eqnarray*}
where $G_0 = \Z_2 \x \Z_2 \x \Z$.  The point here is that
$\wt{\Nil}_0(\Z[G_0])$ is an infinite torsion abelian group.
This provides the first example (\ref{Exm_NonzeroWaldNil})
of a non-zero $\wt{\Nil}$-group in the amalgamated product case and hence
the first example of a non-zero obstruction to splitting a homotopy equivalence in the separating case $(A)$.

\subsection{The Nil-Nil Theorem}\label{exp}

We establish isomorphisms between two types of
codimension 1 splitting obstruction nilpotent class groups, for any ring $R$. The
first type, for separated splitting, arises in the decompositions of
the algebraic $K$-theory of the $R$-coefficient group ring $R[G]$ of
a group $G$ over $D_\infty$, with an epimorphism $p:\,G\to D_{\infty}$ onto the
infinite dihedral group $D_{\infty}$. The second type, for
non-separated splitting, arises from the $\alpha$-twisted polynomial
ring $R[H]_{\alpha}[t]$, with $H=\mathrm{ker}(p)$ and $\alpha:\,F \to F$
an automorphism such that
\[
\ol{G}~=~\mathrm{ker}(\pi\circ p:\,G \to \Z_2)~=~H\rtimes_{\alpha}\Z
\]
where $\pi:\,D_{\infty} \to \Z_2$ is the unique epimorphism with
infinite cyclic kernel. Note:
\begin{itemize}
\item[(A)] $D_{\infty}=\Z_2*\Z_2$ is the free product of two
    cyclic groups of order 2, whose generators will be denoted
    $t_1,t_2$.
\item[(B)] $D_{\infty} = \langle ~t_1, t_2 ~|~ t_1^2 = 1 = t_2^2~ \rangle$ contains the infinite cyclic group $\Z
    =\langle t \rangle$ as a subgroup of index 2 with
    $t=t_1t_2$. In fact there is a short exact sequence with a
    split epimorphism
$$\xymatrix{\{1\} \ar[r] &  \Z \ar[r] &
D_{\infty} \ar[r]^-{\di{\pi}} & \Z_2 \ar[r]& \{1\}~.}$$
\end{itemize}
More generally, if $G$ is a group over $D_\infty$, with an epimorphism $p:\,G \to
D_{\infty}$, then:
\begin{itemize}
\item[(A)] $G=G_1*_HG_2$ is a free product with amalgamation
    of two groups
    \[
    G_1~=~\mathrm{ker}(p_1:\,G \to \Z_2)~,~G_2~=~\mathrm{ker}(p_2:\,G \to \Z_2) \subset G
    \]
    amalgamated over their common subgroup $H=\mathrm{ker}(p)=G_1
\cap G_2$ of index 2 in both $G_1$ and $G_2$.
\item[(B)] $G$ has a subgroup $\ol{G} = \ker(\pi \circ p:\, G
    \to \Z_2)$ of index 2 which is an HNN extension $\ol{G}
    =H\rtimes_{\alpha} \Z$ where $\alpha:\,H \to H$ is
    conjugation by an element $t \in \ol{G}$ with $p(t)=t_1t_2 \in D_{\infty}$.
\end{itemize}

\subsubsection*{The $K$-theory of type (A)}

For any ring $S$ and $S$-bimodules $\fB_1,\fB_2$, we write the $S$-bimodule $\fB_1\otimes_S\fB_2$ as $\fB_1\fB_2$, and we suppress left-tensor products of maps with the identities $\mathrm{id}_{\fB_1}$ or $\mathrm{id}_{\fB_2}$.  The exact category
$\NIL(S;\fB_1,\fB_2)$ has objects being quadruples
$(P_1,P_2,\rho_1,\rho_2)$ consisting of finitely generated  (= finitely generated) projective $S$-modules $P_1,P_2$ and $S$-module morphisms
\[
\rho_1~:~P_1 \longrightarrow \fB_1 P_2~,~\rho_2~:~P_2 \longrightarrow \fB_2P_1
\]
such that $\rho_2\rho_1:\,P_1 \to \fB_1 \fB_2P_1$ is \textbf{nilpotent} in the sense that
\[
(\rho_2 \circ \rho_1)^k~=0~:P_1 \longrightarrow (\fB_1 \fB_2)(\fB_1 \fB_2) \cdots (\fB_1 \fB_2)P_1
\]
for some $k \geqslant 0$.
The morphisms are pairs $(f_1:\, P_1 \to P'_1,f_2:\, P_2 \to P'_2)$ such that $f_2 \circ \rho_1 = \rho'_1 \circ f_1$ and $f_1 \circ \rho_2 = \rho'_2 \circ f_2$.  Recall the Waldhausen $\Nil$-groups $\Nil_*(S;\fB_1,\fB_2) := K_*(\NIL(S;\fB_1,\fB_2))$, and the reduced $\Nil$-groups $\wt{\Nil}_*$ satisfy
\[
\Nil_*(S;\fB_1,\fB_2)~=~K_*(S)\oplus K_*(S) \oplus
\wt{\Nil}_*(S;\fB_1,\fB_2)~.
\]
An object $(P_1,P_2,\rho_1,\rho_2)$ in $\NIL(S;\fB_1,\fB_2)$ is \textbf{semi-split} if the $S$-module isomorphism $\rho_2:P_2 \to \fB_2P_1$ is an isomorphism.

Let $R$ be a ring which is an amalgamated free product
$$R~=~R_1*_SR_2$$
with $R_k=S \oplus \fB_k$ for $S$-bimodules $\fB_k$ which are free $S$-modules, $k=1,2$. The algebraic $K$-groups were shown in \cite{Waldhausen_1969,Waldhausen_1973,Waldhausen_Rings} to fit into a long exact sequence
$$\begin{array}{l}
\dots \to K_n(S)\oplus \wt{\Nil}_n(S;\fB_1,\fB_2) \to K_n(R_1)\oplus K_n(R_2)\to K_n(R)\\[1ex]
\hskip150pt
\to K_{n-1}(S) \oplus \wt{\Nil}_{n-1}(S;\fB_1,\fB_2) \to \dots
\end{array}$$
with $K_n(R) \to \wt{\Nil}_{n-1}(S;\fB_1,\fB_2)$ a split surjection.

 For any ring $R$ a based finitely generated  free $R$-module chain complex $C$ has a torsion
$\tau(C) \in K_1(R)$. The torsion of a chain equivalence $f:C \to D$ of based
finitely generated  free $R$-module chain complexes is the torsion of the algebraic mapping cone
$$\tau(f)~=~\tau(\fC(f)) \in K_1(R)~.$$
By definition, the chain equivalence is \textbf{simple} if $\tau(f)=0 \in K_1(R)$.

For $R=R_1*_SR_2$ the algebraic analogue of codimension 1 manifold
transversality shows that every based  finitely generated  free $R$-module
chain complex $C$ admits a Mayer--Vietoris presentation
$$\fC~:~0 \to R\otimes_SD \to (R\otimes_{R_1}C_1) \oplus (R\otimes_{R_2}C_2) \to C \to 0$$
with $C_k$ a based finitely generated  free $R_k$-module chain
complex, $D$ a based finitely generated  free $S$-module chain
complex with $R_k$-module chain maps $R_k\otimes_SD \to C_k$, and
$\tau(\fC)=0 \in K_1(R)$. This was first proved in
\cite{Waldhausen_1969,Waldhausen_1973}; see also \cite[Remark
8.7]{Ranicki_Novikov} and \cite{Ranicki_Trans}. A contractible $C$
\textbf{splits} if it is simple chain equivalent to a chain complex
(also denoted by $C$) with a Mayer--Vietoris presentation $\fC$
with $D$ contractible, in which case $C_1,C_2$ are also
contractible and the  torsion $\tau(C) \in K_1(R)$ is such that
$$\tau(C)~=~\tau(R\otimes_{R_1}C_1)+\tau(R\otimes_{R_2}C_2)-
\tau(R\otimes_SD) \in \mathrm{im}(K_1(R_1)\oplus K_1(R_2) \to K_1(R))~.$$
By the algebraic obstruction theory of \cite{Waldhausen_1969} $C$ splits if and only if
$$\tau(C) \in \mathrm{im}(K_1(R_1)\oplus K_1(R_2) \to K_1(R))~=~\mathrm{ker}(K_1(R) \to K_0(S)\oplus \wt{\Nil}_0(S;\fB_1,\fB_2))~.$$

For any ring $R$ the group ring $R[G]$ of an amalgamated free product of groups $G=G_1*_HG_2$ is an amalgamated free product of rings
$$R[G]~=~R[G_1]*_{R[H]}R[G_2]~.$$
If $H \to G_1$, $H \to G_2$ are injective then the $R[H]$-bimodules
$R[G_1-H]$, $R[G_2-H]$ are free, and Waldhausen~\cite{Waldhausen_Rings} decomposed the algebraic $K$-theory of $R[G]$ as
\[
K_*(R[G])~=~K_*(R[H] \to R[G_1]\times R[G_2]) \oplus \wt{\Nil}_{*-1}(R[F];R[G_1-H],R[G_2- H])~.
\]
In particular, there is defined a split monomorphism
\[
\sigma_A~:~\wt{\Nil}_{*-1}(R[H];R[G_1- H],R[G_2- H])
\longrightarrow K_*(R[G]) ~,
\]
which for $*=1$ is given by
\begin{align*}
\sigma_A ~:~ &\wt{\Nil}_0(R[H];R[G_1- H],R[G_2- H])
\longrightarrow K_1(R[G])~;\\
& [P_1,P_2,\rho_1,\rho_2] \longmapsto \left[ R[G]\otimes_{R[H]}(P_1 \oplus P_2), \begin{pmatrix} 1 & \rho_2 \\ \rho_1 & 1\end{pmatrix} \right]~.
\end{align*}

\subsubsection*{The $K$-theory of type (B)}

Given a ring $R$ and an $R$-bimodule $\fB$, consider the tensor algebra $T_R(\fB)$ of $\fB$ over $R$:
\[
T_R(\fB)~:=~R \;\oplus\; \fB \;\oplus\; \fB\fB \;\oplus\; \cdots.
\]
The $\Nil$-groups $\Nil_*(R;\fB)$ are defined to be the algebraic
$K$-groups $K_*(\NIL(R;\fB))$ of the exact category $\NIL(R;\fB)$
with objects pairs $(P,\rho)$ with $P$ a finitely generated
projective $R$-module and $\rho:\,P \to \fB P$ an $R$-module
morphism, \textbf{nilpotent} in the sense that for some $k \geqslant
0$, we have
\[
\rho^k = 0 ~:~ P \longrightarrow \fB  P \longrightarrow \cdots \longrightarrow \fB^k  P.
\]
The reduced $\Nil$-groups $\wt{\Nil}_*$ are such that
\[\Nil_*(R;\fB)~=~K_*(R) \oplus \wt{\Nil}_*(R;\fB)~.\]
Waldhausen~\cite{Waldhausen_Rings} proved that if $\fB$ is finitely generated  projective as a
left $R$-module and free as a right $R$-module, then
\[
K_*(T_R(\fB))~=~K_*(R) \oplus \wt{\Nil}_{*-1}(R;\fB).
\]
There is a split monomorphism
\[
\sigma_B~:~\wt{\Nil}_{*-1}(R;\fB) \longrightarrow K_*(T_R(\fB))
\]
which for $*=1$ is given by
\[
\sigma_B~:~\wt{\Nil}_0(R;\fB) \longrightarrow K_1(T_R(\fB))~;~
[P,\rho] \longmapsto [T_R(\fB)P,1-\rho]~.
\]
In particular, for $\fB=R$, we have:
\begin{align*}
&\Nil_*(R;R)~=~\Nil_*(R)~,~\wt{\Nil}_*(R;R)~=~\wt{\Nil}_*(R)\\
&T_R(\fB)~=~R[t]~,~K_*(R[t])~=~K_*(R) \oplus \wt{\Nil}_{*-1}(R)~.
\end{align*}

\subsubsection*{Relating the $K$-theory of types (A) and (B)}

Recall that a small category $I$ is \textbf{filtered} if:
\begin{itemize}
\item
for any pair of objects $\alpha, \alpha'$ in $I$, there exist an
object $\beta$ and morphisms $\alpha \to \beta$ and $\alpha' \to
\beta$ in $I$, and

\item
for any pair of morphisms $u, v:\, \alpha \to \alpha'$
in $I$, there exists an object $\beta$ and morphism $w:\, \alpha'
\to \beta$ such that $w \circ u = w \circ v$.
\end{itemize}
Note that any directed poset $I$ is a filtered category.  A \textbf{filtered colimit} is a colimit over a filtered category.

\begin{thm}[The Nil-Nil Theorem]\label{maink}
Let $R$ be a ring. Let $\fB_1,\fB_2$ be $R$-bimodules.  Suppose that
 $\fB_2 = \colim_{\alpha \in I}
\fB_2^\alpha$ is a filtered colimit limit of $R$-bimodules such that each
$\fB_2^\alpha$ is  a finitely generated   projective left $R$-module.
Then, for all $n \in \Z$, the $\Nil$-groups of the triple
$(R;\fB_1,\fB_2)$ are related to the $\Nil$-groups of the pair
$(R;\fB_1 \fB_2)$ by isomorphisms
\begin{eqnarray*}
\Nil_n(R;\fB_1,\fB_2) &\cong &\Nil_n(R;\fB_1 \fB_2) \oplus  K_n(R)~,\\
\wt{\Nil}_n(R;\fB_1,\fB_2) &\cong& \wt{\Nil}_n(R;\fB_1 \fB_2)~.
\end{eqnarray*}
In particular, for $n=0$ and  $\fB_2$  a finitely generated  projective left $R$-module, there are defined inverse isomorphisms
\begin{align*}
&i_*~:~\Nil_0(R;\fB_1 \fB_2)\oplus K_0(R)
\xrightarrow{~\cong~} \Nil_0(R;\fB_1,\fB_2)~;\\
&([P_1,\rho_{12}:\,P_1 \to \fB_1 \fB_2P_1],[P_2])
\longmapsto [P_1, \fB_2  P_1\oplus P_2,\begin{pmatrix}
\rho_{12}\\ 0\end{pmatrix},(1~0)]~,\\
&j_*~:~\Nil_0(R;\fB_1,\fB_2) \xrightarrow{~\cong~}
\Nil_0(R;\fB_1   \fB_2)\oplus K_0(R)~;\\
&[P_1,P_2,\rho_1:\,P_1 \to \fB_1 P_2,\rho_2:
P_2 \to \fB_2P_1]\longmapsto ([P_1,\rho_2\circ \rho_1],
[P_2] - [\fB_2P_1])~.
\end{align*}
The reduced versions are the inverse isomorphisms
\begin{align*}
&i_*~:~\wt{\Nil}_0(R;\fB_1 \fB_2) \xrightarrow{~\cong~} \wt{\Nil}_0(R;\fB_1,\fB_2)~;~
[P_1,\rho_{12}] \longmapsto [P_1, \fB_2  P_1,\rho_{12},1]~,\\
&j_*~:~\wt{\Nil}_0(R;\fB_1,\fB_2) \xrightarrow{~\cong~}
\wt{\Nil}_0(R;\fB_1   \fB_2)~;~
[P_1,P_2,\rho_1,\rho_2]\longmapsto [P_1,\rho_2\circ \rho_1]
\end{align*}
with $i_*(P_1,\rho_{12})=(P_1, \fB_2  P_1,\rho_{12},1)$ semi-split.
\end{thm}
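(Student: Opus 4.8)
The plan is to construct explicit exact functors between the exact categories $\NIL(R;\fB_1,\fB_2)$ and $\NIL(R;\fB_1\fB_2) \times \PROJ(R)$ that are mutually inverse up to natural isomorphism, and then invoke the additivity theorem for $K$-theory (Quillen) to conclude that the induced maps on all $K$-groups are isomorphisms; passing to the reduced groups then kills the $K_n(R)$ summand. On objects, the functor $j$ sends $(P_1,P_2,\rho_1,\rho_2)$ to the pair $\big((P_1,\rho_2\circ\rho_1),\, [P_2]-[\fB_2 P_1]\big)$; the nilpotence of $\rho_2\circ\rho_1$ is immediate from that of the composite in $\NIL(R;\fB_1,\fB_2)$, and one checks this is exact since $\fB_2$ is (locally) projective as a left $R$-module so $P_1\mapsto \fB_2 P_1$ is exact. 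In the reverse direction $i$ sends $(P_1,\rho_{12})$ to $(P_1,\fB_2 P_1,\rho_{12},\id)$, which is semi-split by construction and lies in $\NIL(R;\fB_1,\fB_2)$ because $(\id\circ\rho_{12})^k = \rho_{12}^k = 0$. The composite $j\circ i$ is visibly the identity (on the nose, in the reduced case). For $i\circ j$ one must produce a natural transformation from $(P_1,P_2,\rho_1,\rho_2)$ to $(P_1,\fB_2 P_1,\rho_2\circ\rho_1,\id)$; the obvious candidate for the second component is $\rho_2:P_2\to\fB_2 P_1$, which is compatible with the structure maps, but is not an isomorphism in general — so $i\circ j$ is not naturally isomorphic to the identity, only to it up to the class of a short exact sequence, and this is exactly where the additivity theorem (rather than a bare equivalence of categories) is needed.

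The key technical input, and the main obstacle, is handling the colimit hypothesis: $\fB_2$ is only a \emph{filtered colimit} $\colim_{\alpha\in I}\fB_2^\alpha$ of bimodules that are finitely generated projective as left $R$-modules, and $\fB_2$ itself need not be finitely generated. The difficulty is that for a general object $(P_1,P_2,\rho_1,\rho_2)$ the map $\rho_2:P_1\to\fB_2 P_1$ has image in a finitely generated submodule, hence factors through some $\fB_2^\alpha P_1$ after enlarging $\alpha$, but one must do this \emph{functorially} and \emph{compatibly} with the nilpotence condition and with morphisms in the category. The clean way to organize this is to first prove the theorem when $\fB_2$ is itself finitely generated projective as a left $R$-module — where $i,j$ as above are honest exact functors and additivity applies directly — and then to show both sides commute with filtered colimits of the bimodule variable $\fB_2$. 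For the left side one observes $\NIL(R;\fB_1,\fB_2) = \colim_\alpha \NIL(R;\fB_1,\fB_2^\alpha)$ as exact categories (every object and morphism involves only finitely much data, so descends to a finite stage), and $K$-theory commutes with filtered colimits of exact categories; for the right side, $\fB_1\fB_2 = \colim_\alpha \fB_1\fB_2^\alpha$ since $\fB_1\otimes_R -$ preserves colimits, and again $\NIL(R;\fB_1\fB_2)=\colim_\alpha\NIL(R;\fB_1\fB_2^\alpha)$. Since the isomorphisms constructed at finite stages are natural in $\alpha$, they pass to the colimit.

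The remaining steps are then routine bookkeeping: verify that $i$ and $j$ are exact (short exact sequences in $\NIL$ are those that are short exact on the underlying projective modules, and $\fB_2^\alpha\otimes_R-$, $\fB_1\otimes_R-$ preserve these because the bimodules are left-projective); verify the matrix formulas for $i_*,j_*$ on $K_0$ by tracing an object $(P,\rho)$ and its image through the definitions, using that $[\fB_2 P_1]$ is well-defined in $K_0(R)$ since $\fB_2$ is finitely generated projective on the left in the $n=0$ statement; and confirm the splitting $\Nil_n(R;\fB_1,\fB_2)=K_n(R)\oplus K_n(R)\oplus\wt\Nil_n$ is respected, so that the unreduced isomorphism with the single extra $K_n(R)$ descends to the stated reduced isomorphism. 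Finally one records that $i_*(P_1,\rho_{12})=(P_1,\fB_2 P_1,\rho_{12},\id)$ is semi-split, which is immediate from the definition of semi-split (the map $\rho_2=\id$ is an isomorphism) — this is the point that feeds into the topological application on semi-splitting.
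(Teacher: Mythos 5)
Your outline captures the right high-level ideas (exact functors, additivity, filtered colimits), but the central technical step is not correct as stated, and one whole range of $n$ is unaddressed.

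First, the crux: you correctly observe that $(1,\rho_2)\colon (P_1,P_2,\rho_1,\rho_2)\to (P_1,\fB_2 P_1,\rho_2\circ\rho_1,1)$ is a natural transformation from the identity to $i\circ j$, and that it is not an isomorphism. But it is also, in general, neither an admissible monomorphism nor an admissible epimorphism in $\NIL(R;\fB_1,\fB_2)$, since $\rho_2$ need not be injective or surjective. Quillen's additivity theorem requires a short exact sequence of exact functors, so you cannot invoke it on this transformation directly; saying ``this is exactly where the additivity theorem is needed'' names the tool without supplying the exact sequences it needs. The paper's proof fills this gap by introducing an intermediate object
$x'=\bigl(P_1,\ \fB_2 P_1\oplus P_2,\ \bigl(\begin{smallmatrix}0\\ \rho_1\end{smallmatrix}\bigr),\ (1\ \ \rho_2)\bigr)$,
which is essentially the mapping cylinder of $\rho_2$: it receives an admissible mono $f\colon x\hookrightarrow x'$ and maps by an admissible epi $f'\colon x'\twoheadrightarrow x''$, and there are the two short exact sequences
$0\to x\oplus a\to x'\oplus a\to a'\to 0$ and $0\to a\to x'\to x''\to 0$
(with $a=(0,P_2,0,0)$ and $a'=(0,\fB_2 P_1,0,0)$). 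Applying additivity to these and subtracting gives $Ki\circ Kj\simeq 1+(KG'-KG)$ with $G,G'$ landing in $\PROJ(R)\times\PROJ(R)$, hence $i_*\circ j_*=1$ on the reduced groups. Without $x'$, the argument does not go through.

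Second, a framing problem: you propose $j$ as an exact functor into $\NIL(R;\fB_1\fB_2)\times\PROJ(R)$ sending $(P_1,P_2,\rho_1,\rho_2)$ to $\bigl((P_1,\rho_2\circ\rho_1),\,[P_2]-[\fB_2 P_1]\bigr)$, but $[P_2]-[\fB_2 P_1]$ is a $K_0$-class, not an object of $\PROJ(R)$, so this is not a functor. The paper's $j$ is the exact functor $\NIL(R;\fB_1,\fB_2)\to\NIL(R;\fB_1\fB_2)$, $(P_1,P_2,\rho_1,\rho_2)\mapsto(P_1,\rho_2\circ\rho_1)$; the extra $K_n(R)$ summand is accounted for only after passing to $K$-groups and comparing reduced with unreduced, not at the level of exact categories.

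Third, the theorem asserts isomorphisms for all $n\in\Z$, but Quillen additivity and the filtered-colimit commutation argument only produce the result for $n\geqslant 0$. The negative range requires a separate argument. The paper handles $n<0$ via the Gersten--Wagoner cone and suspension rings: one shows (\fullref{Lem_Transpose}) that $\Sigma^n(\fB_1\fB_2)\cong\Sigma^n\fB_1\otimes_{\Sigma^n R}\Sigma^n\fB_2$, defines $\Nil_{-n}(R;\cdots):=\Nil_0(\Sigma^n R;\cdots)$, and then applies the $n=0$ case of \fullref{Thm_HigherDKR} to $\Sigma^n R$. Your proposal does not touch this.

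The colimit part of your argument matches the paper: filtered colimits of exact categories and of bimodules, together with $\fB_1\otimes_R-$ preserving colimits, reduce to the finitely generated projective case. That piece is fine. The rest needs the mapping-cylinder object and the suspension-ring delooping to be a complete proof.
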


\begin{proof}
This follows immediately from \fullref{Thm_HigherDKR} and \fullref{Thm_LowerDKR}.
\end{proof}

\begin{rem}
\fullref{maink} was already known to Pierre Vogel in 1990---see \cite{Vogel}.
\end{rem}

\section{Higher $\Nil$-groups}\label{nilsection}

In this section, we shall prove \fullref{maink} for
non-negative degrees.

Quillen~\cite{Quillen} defined the $K$-theory space $K\fE := \Omega
BQ(\fE)$ of an exact category $\fE$.  The space $BQ(\fE)$ is the
geometric realization of the simplicial set $N_\bullet Q(\fE)$,
which is the nerve of a certain category $Q(\fE)$ associated to
$\fE$. The algebraic $K$-groups of $\fE$ are defined for $* \in \Z$
$$K_*(\fE)~:=~\pi_*(K\fE)$$
using a nonconnective delooping for $* \leqslant -1$.
In particular, the algebraic $K$-groups of a ring $R$ are the
algebraic $K$-groups
$$K_*(R)~:=~K_*(\PROJ(R))$$
of the exact category $\PROJ(R)$ of finitely generated  projective $R$-modules.
The $\NIL$-categories defined in the Introduction all
have the structure of exact categories.

\begin{thm}\label{Thm_HigherDKR}Let $\fB_1$ and  $\fB_2$ be bimodules over a ring $R$.  Let $j$ be the exact functor
\[
j:\, \NIL(R;\fB_1,\fB_2) \longra \NIL(R;\fB_1  \fB_2)~;~ (P_1,P_2,\rho_1,\rho_2) \longmapsto (P_1,  \rho_2 \circ \rho_1) ~.
\]
\begin{enumerate}

\item  If $\fB_2$ is finitely generated  projective as a left $R$-module, then there is an exact functor
$$
i:\, \NIL(R;\fB_1  \fB_2) \longra \NIL(R;\fB_1,\fB_2) ~;~ (P,\rho) \longmapsto (P,\fB_2 P,\rho, 1)
$$
such that $i(P,\rho)=(P,\fB_2P,\rho,1)$ is semi-split, $j \circ i = 1$,  and  $i_*$ and $j_*$ induce inverse isomorphisms on the reduced Nil-groups
$$
\wt \Nil_*(R; \fB_1 \fB_2) ~\cong~  \wt \Nil_*(R; \fB_1, \fB_2)~.
$$

\item  If $\fB_2=\colim_{\alpha \in I}
\fB_2^\alpha$ is a filtered colimit of bimodules each of which is finitely generated  projective as a left $R$-module, then there is a unique exact functor $i$ so that the following diagram commutes for all $\alpha \in I$
$$
\begin{diagram}
\node{\NIL(R;\fB_1  \fB_2)} \arrow{e,t}{\di{i}} \node{\NIL(R;\fB_1,  \fB_2)} \\
\node{\NIL(R;\fB_1  \fB_2^\alpha)} \arrow{e,t}{\di{i^\alpha}} \arrow{n} \node{\NIL(R;\fB_1,  \fB_2^\alpha)~.} \arrow{n}
\end{diagram}
$$
Then $j \circ i = 1$ and  $i_*$ and $j_*$ induce inverse isomorphisms on the reduced Nil-groups
$$
\wt \Nil_*(R; \fB_1 \fB_2) ~\cong~  \wt \Nil_*(R; \fB_1, \fB_2)~.
$$

\end{enumerate}

\end{thm}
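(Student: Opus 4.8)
The goal is to prove Theorem~\ref{Thm_HigherDKR}, which identifies the reduced Nil-groups of a triple $(R;\fB_1,\fB_2)$ with those of the pair $(R;\fB_1\fB_2)$ via the explicit functors $i$ and $j$, in nonnegative degrees (the negative degrees being handled later by Theorem~\ref{Thm_LowerDKR}). The strategy is to reduce part~(ii) to part~(i) by a colimit argument, and to prove part~(i) by producing a natural exact sequence of exact categories that splits off a copy of $\PROJ(R)$, so that on $K$-theory one gets $\Nil_*(R;\fB_1,\fB_2)\cong \Nil_*(R;\fB_1\fB_2)\oplus K_*(R)$ compatibly with $i_*$ and $j_*$.

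\emph{Part (i): the core case.} Assume $\fB_2$ is finitely generated projective as a left $R$-module. First I would check the purely formal assertions: $i$ is a well-defined exact functor (the object $(P,\fB_2 P,\rho,1)$ lies in $\NIL(R;\fB_1,\fB_2)$ because $\fB_2 P$ is finitely generated projective, and the composite $1\circ\rho=\rho$ is the given nilpotent endomorphism up to the identification $\fB_1\fB_2\fB_2 P\cong(\fB_1\fB_2)(\fB_2 P)$), that $j$ is exact, and that $j\circ i=\id$ on the nose. Hence $j_*\circ i_*=\id$ on $K$-theory, so $i_*$ is a split injection and $j_*$ a split surjection. The real content is that the complementary summand $\ker(j_*)$ is exactly $K_*(R)$ — more precisely, that on reduced Nil-groups $i_*$ and $j_*$ are mutually inverse. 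The cleanest way to see this is to define a third functor $k:\NIL(R;\fB_1,\fB_2)\to\PROJ(R)$, $(P_1,P_2,\rho_1,\rho_2)\mapsto \coker(\rho_2)$ or rather the virtual module $[P_2]-[\fB_2 P_1]$, and exhibit a natural filtration or short exact sequence of functors
\[
0 \longra i\circ j \longra \id_{\NIL(R;\fB_1,\fB_2)} \longra (\text{functor factoring through }\PROJ(R)) \longra 0
\]
realizing any object $(P_1,P_2,\rho_1,\rho_2)$, after suitable stabilization, as built from $i(P_1,\rho_2\rho_1)=(P_1,\fB_2 P_1,\rho_1,1)$ and from $(0,Q,0,0)$ with $Q$ projective. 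Concretely, the map $\rho_2:P_2\to\fB_2 P_1$ together with a chosen splitting (here is where $\fB_2 P_1$ being projective, or the colimit hypothesis, is used to lift idempotents and split the relevant sequence) gives a direct-sum decomposition up to isomorphism in the exact category, and the Additivity Theorem of Quillen then yields $\Nil_*(R;\fB_1,\fB_2)\cong \Nil_*(R;\fB_1\fB_2)\oplus K_*(\PROJ(R))$ with the stated formulas for $i_*$ and $j_*$. On the reduced groups the $K_*(R)$ summands cancel and we get the claimed inverse isomorphisms; the semi-split assertion is immediate from the definition of $i$ since the second structure map of $i(P,\rho)$ is $1$.

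\emph{Part (ii): the colimit case.} Given $\fB_2=\colim_{\alpha\in I}\fB_2^\alpha$ with each $\fB_2^\alpha$ finitely generated projective as a left $R$-module, uniqueness and existence of the exact functor $i$ compatible with all the $i^\alpha$ is a diagram-chase: an object $(P,\rho)$ of $\NIL(R;\fB_1\fB_2)$ has $P$ finitely generated, so $\rho:P\to\fB_1\fB_2 P=\colim_\alpha \fB_1\fB_2^\alpha P$ factors (non-uniquely, but uniquely up to enlarging $\alpha$) through some $\fB_1\fB_2^\alpha P$, and applying $i^\alpha$ then pushing forward to $\fB_2$ gives $i(P,\rho)$; independence of the choice of $\alpha$ uses the second filtered-colimit axiom. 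The key homological input is that Nil-groups commute with filtered colimits of the bimodule in the relevant variable — $\NIL(R;\fB_1,\fB_2)=\colim_\alpha\NIL(R;\fB_1,\fB_2^\alpha)$ and similarly for the pair — since $K$-theory of exact categories commutes with filtered colimits (each object and each relation involves only finitely generated data, hence lives at some finite stage). Then $i_*=\colim_\alpha i^\alpha_*$ and $j_*=\colim_\alpha j^\alpha_*$, and part~(i) applied at each $\alpha$ gives inverse isomorphisms on reduced Nil-groups that pass to the colimit.

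\emph{The main obstacle.} The formal identities ($j\circ i=\id$, exactness, the colimit bookkeeping) are routine; the crux is establishing that $i_*$ and $j_*$ are genuinely \emph{inverse} on the reduced groups, i.e.\ correctly identifying the complementary $K_*(R)$-summand and showing the map $(P_1,P_2,\rho_1,\rho_2)\mapsto ([P_1,\rho_2\rho_1],[P_2]-[\fB_2 P_1])$ is well-defined on $K$-theory and inverse to $i_*$. This is where one must deploy the Additivity Theorem carefully — finding the right natural short exact sequence of exact functors on $\NIL(R;\fB_1,\fB_2)$ whose middle term is the identity and whose outer terms are $i\circ j$ and a $\PROJ(R)$-valued functor — and where the hypothesis that $\fB_2$ is (a colimit of) finitely generated projective left $R$-modules is essential, since it is exactly what makes $\fB_2 P_1$ a legitimate object-component and makes the relevant sequences of projectives split. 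I expect the bulk of the work in the full proof to be the explicit verification that this sequence of functors is exact and that Additivity applies on the nose.
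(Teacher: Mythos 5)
Your overall shape is right — produce $i$, show $j\circ i=1$, then use Additivity to show $i\circ j$ is the identity modulo something landing in $\PROJ(R)$, and reduce part (ii) to part (i) by filtered colimits. The colimit reduction in part (ii) matches the paper's proof essentially verbatim (it relies on Quillen's observation that $K$-theory of exact categories commutes with filtered colimits, and on the fact that the data of a nil-object involve finitely generated modules and finitely many maps, so they live at a finite stage). The formal checks ($i$ exact, $j$ exact, $j\circ i=1$, semi-splitness) are also correct.

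However, there is a genuine gap at the crux of part (i). The short exact sequence of functors you propose,
\[
0 \longrightarrow i\circ j \longrightarrow \id_{\NIL(R;\fB_1,\fB_2)} \longrightarrow (\text{functor factoring through } \PROJ(R)) \longrightarrow 0,
\]
does not exist. Spelling it out: a natural monomorphism $(P_1,\fB_2 P_1,\rho_2\circ\rho_1,1)\hookrightarrow(P_1,P_2,\rho_1,\rho_2)$ with identity in the first slot would force its second component $f_2\colon\fB_2 P_1\to P_2$ to satisfy $\rho_2\circ f_2 = 1$, i.e.\ $\rho_2$ would have to be split surjective; but $\rho_2$ is an arbitrary (possibly zero) map in the category $\NIL$, so no such natural transformation exists. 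The natural map the other way, $(1,\rho_2)\colon x\to (i\circ j)(x)$, likewise fails to be an epimorphism in general. Related symptoms: the value $\coker(\rho_2)$ you mention is not a projective $R$-module for arbitrary $\rho_2$, so cannot serve as the object part of a $\PROJ(R)$-valued functor; ``a chosen splitting'' of $\rho_2$ does not exist (and even if $\rho_2$ were split, a choice of splitting would not be functorial); and the virtual class $[P_2]-[\fB_2 P_1]$ is not the value of any exact functor to $\PROJ(R)$ — one needs a genuine difference of two functors formed at the level of $K$-theory spectra.

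What the paper does instead is introduce an intermediate ``mapping cylinder'' functor $F'$ with
\[
F'(P_1,P_2,\rho_1,\rho_2) ~=~ \left(P_1,\ \fB_2 P_1\oplus P_2,\ \begin{pmatrix}0\\\rho_1\end{pmatrix},\ \begin{pmatrix}1&\rho_2\end{pmatrix}\right),
\]
whose second structure map is split surjective by construction (this is where the finite generation/projectivity of $\fB_2 P_1$ is used, to know $\fB_2 P_1\oplus P_2$ stays in the category). There are then two exact sequences of exact functors
\[
0 \to 1\oplus G \to F'\oplus G \to G' \to 0, \qquad 0 \to G \to F' \to F'' \to 0,
\]
with $F''=i\circ j$, $G(x)=(0,P_2,0,0)$, $G'(x)=(0,\fB_2 P_1,0,0)$, and both $G,G'$ factoring through $\PROJ(R)\times\PROJ(R)$. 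Applying Additivity to both sequences gives $KF'\simeq 1+KG'$ and $KF'\simeq KG+KF''$, and then the loop-space structure of the $K$-theory space allows the subtraction $K(i\circ j)=KF''\simeq 1 + (KG'-KG)$. Since $G$ and $G'$ kill the reduced Nil group, this yields $i_*\circ j_*=1$ on $\widetilde{\Nil}_*$. This two-step comparison through the auxiliary $F'$ — rather than a single SES with $i\circ j$ as subfunctor or quotient functor of the identity — is the key idea your proposal is missing, and it is precisely what makes the argument close.
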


\begin{proof}
\noindent (1)  Note that there are split injections of exact categories
\begin{gather*}
\PROJ(R) \times \PROJ(R) \to \NIL(R; \fB_1, \fB_2)~;~(P_1,P_2) \longmapsto (P_1,P_2,0,0)~,\\
\PROJ(R)  \to \NIL(R; \fB_1 \fB_2)~;~(P) \longmapsto (P,0)~,
\end{gather*}
which underly the definition of the reduced Nil groups.
Since both $i$ and $j$ take the image of the split injection to the image of the other split injection, they induce maps $i_*$ and $j_*$ on the reduced Nil groups.  Since $j \circ i = 1$, it follows that $j_* \circ i_* = 1$.

In preparation for the proof that $i_* \circ j_* = 1$, consider the following objects of  $\NIL(R;\fB_1,\fB_2)$
\begin{eqnarray*}
x &:=& (P_1, P_2, \rho_1, \rho_2)\\
x' &:=& (P_1, \fB_2 P_1 \oplus P_2, \begin{pmatrix}0\\ \rho_1\end{pmatrix},
\begin{pmatrix}1 & \rho_2\end{pmatrix})\\
x'' &:=& (P_1, \fB_2 P_1, \rho_2 \circ \rho_1, 1)\\
a &:=& (0,P_2,0,0)\\
a' &:=& (0,\fB_2 P_1,0,0)
\end{eqnarray*}
with $x''$ semi-split. Note that $(i \circ j)(x) = x''$.
Define morphisms
\begin{eqnarray*}
f &:=& (1, \begin{pmatrix}0\\ 1\end{pmatrix}) :x \longra x'\\
f' &:=& (1, \begin{pmatrix}1 & \rho_2\end{pmatrix}) :x' \longra x''\\
g &:=& (0, \begin{pmatrix}-\rho_2 \\ 1\end{pmatrix}): a \longra x'\\
g' &:=& (0, \begin{pmatrix}1 & 0\end{pmatrix}) :x' \longra a'\\
h &:=& (0, \rho_2) :a \longra a'.
\end{eqnarray*}

There are exact sequences
\begin{gather*}
\begin{CD}
0 @>>> x \oplus a @>{\begin{pmatrix}f & g\\ 0 & 1\end{pmatrix}}>> x' \oplus a
@>{\begin{pmatrix}g' & h\end{pmatrix}}>> a' @>>> 0
\end{CD}\\
\begin{CD}
0 @>>> a @>{\di{g}}>> x' @>{\di{f'}}>> x'' @>>> 0.
\end{CD}
\end{gather*}
Define exact functors $F', F'', G, G'~:~ \NIL(R;\fB_1,\fB_2) \to \NIL(R;\fB_1,\fB_2)$ by
\[
F'(x) = x', \quad F''(x) = x'', \quad G(x) = a, \quad G'(x) = a'.
\]
Thus we have two exact sequences of exact functors
\begin{gather*}
\begin{CD}
0 @>>> 1 \oplus G @>>> F' \oplus G @>>> G' @>>> 0
\end{CD}\\
\begin{CD}
0 @>>> G @>>> F' @>>> F'' @>>> 0.
\end{CD}
\end{gather*}
Recall $j \circ i = 1$, and note $i \circ j = F''$.  By Quillen's
Additivity Theorem~\cite[page 98, Corollary 1]{Quillen}, we obtain
homotopies $KF' \simeq 1 + KG'$ and $KF' \simeq KG + KF''$. Then
$$Ki \circ Kj~=~KF'' \simeq 1 + (KG'-KG)~,$$
where the subtraction uses the loop space structure.
Observe that both $G$ and $G'$ send $\NIL(R;\fB_1,\fB_2)$ to the image of
$\PROJ(R) \times \PROJ(R)$.  Thus $i_* \circ j_* = 1$ as desired.

\noindent (2)
It is straightforward to show that tensor product commutes with
colimits over a category. Moreover, for any object $x =
(P_1,P_2,\rho_1:\, P_1 \to \fB_1 P_2, \rho_2:\, P_2 \to \fB_2 P_1)$,
since $P_2$ is finitely generated , there exists $\alpha \in I$ such
that $\rho_2$ factors through a map $P_2 \to \fB_2^\alpha P_1$, and
similarly for short exact sequences of nil-objects. We thus obtain induced
isomorphisms of exact categories:
\begin{eqnarray*}
\colim_{\alpha \in I} \NIL(R;\fB_1  \fB_2^\alpha) &\longrightarrow& \NIL(R;\fB_1  \fB_2)\\
\colim_{\alpha \in I} \NIL(R;\fB_1,\fB_2^\alpha) &\longrightarrow& \NIL(R;\fB_1,\fB_2).
\end{eqnarray*}
This justifies the existence and uniqueness of the functor $i$.

By Quillen's colimit observation \cite[Section 2, Equation (9), page
20]{Quillen}, we obtain induced weak homotopy equivalences of
$K$-theory spaces:
\begin{eqnarray*}
\colim_{\alpha \in I} K\NIL(R;\fB_1  \fB_2^\alpha) &\longrightarrow& K\NIL(R;\fB_1  \fB_2)\\
\colim_{\alpha \in I}
K\NIL(R;\fB_1,\fB_2^\alpha) &\longrightarrow& K\NIL(R;\fB_1,\fB_2).
\end{eqnarray*}
The remaining assertions of part (2) then follow from part (1).
\end{proof}

\begin{rem}
The proof of \fullref{Thm_HigherDKR} is best understood in
terms of finite chain complexes  $x=(P_1,P_2,\rho_1,\rho_2)$ in
the category $\NIL(R;\fB_1,\fB_2)$, assuming that $\fB_2$ is
a finitely generated  projective left $R$-module. Any such $x$ represents a
class
\[
[x]~=~\sum\limits^{\infty}_{r=0}(-1)^r[(P_1)_r,(P_2)_r,\rho_1,\rho_2]
\in \Nil_0(R;\fB_1,\fB_2)~.
\]
The key observation is that $x$ determines a finite chain complex
$x'=(P'_1,P'_2,\rho'_1,\rho'_2)$ in $\NIL(R;\fB_1,\fB_2)$
which is semi-split in the sense that  $\rho'_2:\,P'_2\to \fB_2P'_1$ is a chain equivalence, and such that
\begin{equation}\label{eqn:projEulerclass}
[x]~=~[x'] \in \wt{\Nil}_0(R;\fB_1,\fB_2)~.
\end{equation}
Specifically, let $P'_1=P_1$, $P'_2=\fM(\rho_2)$, the algebraic
mapping cylinder of the chain map $\rho_2:\,P_2 \to
\fB_2P_1$, and let
\[\begin{array}{rcccccl}
\rho'_1 &=&
\begin{pmatrix} 0 \\ 0 \\ \rho_1 \end{pmatrix} &:&
P'_1=P_1 &\longrightarrow& \fB_1 P'_2=\fM(1_{\fB_1} \otimes \rho_2)~,\\[2ex]
\rho'_2 &=& \begin{pmatrix} 1 & 0 & \rho_2\end{pmatrix} &:& P'_2~=~\fM(\rho_2) &\longrightarrow& \fB_2P_1~,
\end{array}\]
so that $P'_2/P_2=\fC(\rho_2)$ is the algebraic mapping cone of
$\rho_2$. Moreover, the proof of \eqref{eqn:projEulerclass} is sufficiently functorial
to establish not only that the following maps of the reduced
nilpotent class groups are inverse isomorphisms:
\[\begin{array}{l}
i~:~\wt{\Nil}_0(R;\fB_1 \fB_2) \longrightarrow
\wt{\Nil}_0(R;\fB_1,\fB_2) ~;~(P,\rho) \longmapsto (P,\fB_2P,\rho,1)~,\\[1ex]
j~:~\wt{\Nil}_0(R;\fB_1,\fB_2) \longrightarrow \wt{\Nil}_0(R;\fB_1 \fB_2)~;~
[x] \longmapsto [x']~,
\end{array}
\]
but also that there exist isomorphisms of $\wt{\Nil}_n$ for
all higher dimensions $n > 0$, as shown above.  In order to prove
equation \eqref{eqn:projEulerclass}, note that $x$ fits into the sequence
\begin{equation}\label{eqn:mappingcone}
\xymatrix@C+10pt{0 \ar[r] & x \ar[r]^-{\di{(1,u)}}& x'\ar[r]^-{\di{(0,v)}}&y
\ar[r]& 0}
\end{equation}
with
\[\begin{array}{rcl}
y &=&(0,\fC(\rho_2),0,0)~,\\[2ex]
u &=& \begin{pmatrix} 0 \\ 0 \\ 1 \end{pmatrix}~:~P_2
\to P'_2~=~\fM(\rho_2)~,\\[3ex]
v &=& \begin{pmatrix} 1 & 0 & 0 \\ 0 & 1 & 0 \end{pmatrix}~:~
P'_2 = \fM(\rho_2) \to  \fC(\rho_2)
\end{array}\]
and
\[[y]~=~\sum\limits^{\infty}_{r=0}(-)^r[0,(\fB_2P_1)_{r-1}\oplus(P_2)_r,0,0]~=~0
\in \wt{\Nil}_0(R;\fB_1,\fB_2)~.\] The projection
$\fM(\rho_2) \to \fB_2P_1$ defines a chain equivalence \[x'
~\simeq~ (P_1,\fB_2P_1,\rho_2\circ \rho_1,1)~=~ij(x)\] so
that
\[[x]~=~[x']- [y]~=~[P_1,\fB_2P_1,\rho_2\circ \rho_1,1]~
=~ij[x] \in \wt{\Nil}_0(R;\fB_1,\fB_2)~.\]
Now suppose that $x$ is a 0-dimensional chain complex in
$\NIL(R;\fB_1,\fB_2)$, that is, an object as in the proof
of \fullref{Thm_HigherDKR}. Let $x',x'',a,a',f,f',g,g',h$ be
as defined there. The exact sequence of \eqref{eqn:mappingcone} can be written as
the short exact sequence of chain complexes
\[\xymatrix{ & & a \ar[d]^-{\di{g}} \ar@{=}[r] & a \ar[d]^-{\di{-h}} & \\
0 \ar[r] & x \ar[r]^-{\di{f}} & x' \ar[r]^-{\di{g'}} & a' \ar[r] &
0~.}\]
The first  exact sequence of the proof of \fullref{Thm_HigherDKR} is now immediate:
\[\xymatrix@C+15pt{0 \ar[r] &x \oplus a \ar[r]^-{\begin{pmatrix} f  & g \\
0 & 1 \end{pmatrix}} & x'\oplus a \ar[r]^-{\begin{pmatrix} g' & h
\end{pmatrix}} & a' \ar[r] &0~.}\]
The second  exact sequence is self-evident:
\[\xymatrix{0 \ar[r] &a \ar[r]^-{\di{g}} & x' \ar[r]^-{\di{f'}} & x'' \ar[r]
&0~.}\]
\end{rem}

\section{Lower $\Nil$-groups}\label{Sec_Lower}

\subsection{Cone and suspension rings}

Let us recall some additional structures on the tensor product of
modules.

Originating from ideas of Karoubi--Villamayor~\cite{KaroubiVillamayor}, the following concept was studied
independently by S\,M Gersten~\cite{Gersten} and J\,B Wagoner~\cite{Wagoner} in the construction of the non-connective
$K$-theory spectrum of a ring.

\begin{dfn}[Gersten, Wagoner]
The \textbf{cone ring} $\Lambda\Z$ is the subring of $\omega\times\omega$-matrices over $\Z$ such that each row and column have only a
finite number of non-zero entries. The \textbf{suspension ring} $\Sigma\Z$ is the quotient ring of $\Lambda\Z$ by the
two-sided ideal of matrices with only a finite number of non-zero
entries. For each $n \in \N$, define the rings
\[
\Sigma^n\Z ~:=~ \underbrace{\Sigma\Z \xo_\Z \cdots \xo_\Z \Sigma\Z}_{n \text{
copies}} \quad\text{with}\quad \Sigma^0\Z = \Z.
\]
For a ring $R$ and for $n \in \N$, define the ring
$\Sigma^n R := \Sigma^n\Z \xo_\Z R$.
\end{dfn}

Roughly speaking, the suspension should be regarded
as the ring of ``bounded modulo compact operators.''  Gersten and
Wagoner showed that $K_i(\Sigma^n R)$ is naturally isomorphic to
$K_{i-n}(R)$ for all $i, n \in \Z$, in the sense of Quillen when
the subscript is positive, in the sense of Grothendieck when the
subscript is zero, and in the sense of Bass when the subscript is
negative.

For an $R$-bimodule $\fB$, define the $\Sigma^n
R$-bimodule $\Sigma^n \fB := \Sigma^n\Z \xo_\Z \fB$.

\begin{lem}\label{Lem_Transpose}
Let $R$ be a ring.  Let $\fB_1, \fB_2$ be $R$-bimodules. Then, for
each $n \in \N$, there is a natural isomorphism of $\Sigma^n
R$-bimodules:
\[
t_n:\, \Sigma^n(\fB_1  \fB_2) \longra \Sigma^n \fB_1 \xo_{\Sigma^n R} \Sigma^n \fB_2~;~
s \xo (b_1 \xo b_2) \longmapsto (s \xo b_1) \xo (1_{\Sigma^n \Z} \xo b_2).
\]
\end{lem}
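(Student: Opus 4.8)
The plan is to construct $t_n$ explicitly as the composite of standard base-change and associativity isomorphisms for tensor products, and then to verify that the resulting map agrees with the formula in the statement. First I would observe that since $\Sigma^n R = \Sigma^n\Z \xo_\Z R$, there is for any $R$-bimodule $\fB$ a canonical identification $\Sigma^n\fB = \Sigma^n\Z \xo_\Z \fB \cong \Sigma^n R \xo_R \fB$ as left $\Sigma^n R$-modules (and symmetrically on the right), because $\Sigma^n\Z \xo_\Z \fB \cong (\Sigma^n\Z \xo_\Z R)\xo_R \fB$ by associativity of $\xo$ and the unit isomorphism $R\xo_R\fB\cong\fB$. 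Using this, I would write
\[
\Sigma^n\fB_1 \xo_{\Sigma^n R}\Sigma^n\fB_2
\;\cong\; \Sigma^n\fB_1 \xo_{\Sigma^n R}\bigl(\Sigma^n R\xo_R\fB_2\bigr)
\;\cong\; \Sigma^n\fB_1\xo_R\fB_2
\;\cong\; \bigl(\Sigma^n\Z\xo_\Z\fB_1\bigr)\xo_R\fB_2,
\]
and then reassociate the last expression as $\Sigma^n\Z\xo_\Z(\fB_1\xo_R\fB_2) = \Sigma^n(\fB_1\fB_2)$. Reversing this chain produces a map $\Sigma^n(\fB_1\fB_2)\to\Sigma^n\fB_1\xo_{\Sigma^n R}\Sigma^n\fB_2$, and tracing an elementary tensor $s\xo(b_1\xo b_2)$ through each step yields exactly $(s\xo b_1)\xo(1_{\Sigma^n\Z}\xo b_2)$, which shows the displayed formula is well-defined and gives an isomorphism.

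The second thing to check is that $t_n$ is a map of $\Sigma^n R$-bimodules, i.e.\ that it respects both the left and right $\Sigma^n R$-actions. The left action is immediate from the construction, since every isomorphism in the chain above is left $\Sigma^n R$-linear. The right action requires noting how $\Sigma^n R$ acts on $\Sigma^n\fB_2$ and hence on the target: an element $s'\xo r'\in\Sigma^n R$ acts on $(s\xo b_1)\xo(1\xo b_2)$ by $(s\xo b_1)\xo(s'\xo b_2 r')$, and one checks this matches the right action on $s\xo(b_1\xo b_2)$, namely $ss'\xo(b_1\xo b_2 r')$, after moving $s'$ across the middle tensor symbol using $\Sigma^n R$-bilinearity. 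Naturality in $\fB_1$ and $\fB_2$ is then formal: a pair of bimodule maps $\fB_i\to\fB_i'$ induces maps on both sides compatible with $t_n$ because all the isomorphisms used are natural in their module arguments.

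The only genuine subtlety — and the step I would treat most carefully — is the commutativity of the various tensor factors over $\Z$ versus over $R$: the cone and suspension rings $\Lambda\Z,\Sigma\Z$ are noncommutative, so one must be scrupulous that the "scalar" $s\in\Sigma^n\Z$ only ever slides past the $\Z$-tensor symbols (where $\Sigma^n\Z$ is central by construction of $\Sigma^n R = \Sigma^n\Z\xo_\Z R$) and never past an $R$-tensor symbol. Concretely, the identification $\Sigma^n\fB\cong\Sigma^n R\xo_R\fB$ is the place where $\Sigma^n\Z$-scalars and $R$-scalars interact, and I would verify that this isomorphism is simultaneously left-$\Sigma^n\Z$-linear and an $R$-bimodule map on the $\fB$ variable, so that the composite $t_n$ inherits the claimed $\Sigma^n R$-bimodule structure. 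Once this bookkeeping is in place the lemma follows; it is essentially a formal consequence of associativity and unit constraints for $\xo$, packaged so as to be applied in the next section to reduce Nil-groups of $(\Sigma^n R;\Sigma^n\fB_1,\Sigma^n\fB_2)$ to those of $(\Sigma^n R;\Sigma^n(\fB_1\fB_2))$.
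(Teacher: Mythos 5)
Your argument is correct and is essentially the proof in the paper, which compresses your chain of base-change and associativity isomorphisms into a single ``transposition of the middle two factors''
\[
(\Sigma^n\Z\xo_\Z\fB_1)\xo_{(\Sigma^n\Z\xo_\Z R)}(\Sigma^n\Z\xo_\Z\fB_2) \;\cong\; (\Sigma^n\Z\xo_{\Sigma^n\Z}\Sigma^n\Z)\xo_\Z(\fB_1\fB_2)\,.
\]
One small correction in your last paragraph: $\Sigma^n\Z$ is \emph{not} central in $\Sigma^n R=\Sigma^n\Z\xo_\Z R$, since $\Sigma^n\Z$ is itself a noncommutative ring and $(s\xo 1)(s'\xo r)=ss'\xo r\neq s's\xo r=(s'\xo r)(s\xo 1)$ in general. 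What the argument actually needs --- and what is true --- is that the images of $\Sigma^n\Z$ and of $R$ in $\Sigma^n R$ centralize \emph{each other}, so that on $\Sigma^n\fB=\Sigma^n\Z\xo_\Z\fB$ the $\Sigma^n\Z$-action (on the first factor) and the $R$-action (on the second) are independent. That independence is exactly why the transposition is well defined despite the noncommutativity; with this wording fixed, your precaution about which tensor symbols the scalar $s$ may slide past is the right one.
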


\begin{proof}
By transposition of the middle two factors, note that
\[
\Sigma^n \fB_1 \xo_{\Sigma^n R} \Sigma^n \fB_2 = (\Sigma^n \Z \xo_\Z \fB_1)
\xo_{(\Sigma^n\Z \xo_\Z R)} (\Sigma^n \Z \xo_\Z \fB_2)
\]
is isomorphic to
\[
(\Sigma^n\Z \xo_{\Sigma^n\Z} \Sigma^n\Z) \xo_{\Z}
(\fB_1  \fB_2) = \Sigma^n\Z \xo_\Z (\fB_1  \fB_2) = \Sigma^n(\fB_1  \fB_2).
\proved\]
\end{proof}

\subsection{Definition of lower $\Nil$-groups}

\begin{dfn}\label{Defn_NilTensorAlgebra}
Let $R$ be a ring. Let $\fB$ be an $R$-bimodule.  For all $n \in
\N$, define
\begin{eqnarray*}
\Nil_{-n}(R;\fB) &:=& \Nil_0(\Sigma^nR;\Sigma^n \fB)\\
\wt{\Nil}_{-n}(R;\fB) &:=& \wt \Nil_0(\Sigma^nR;\Sigma^n \fB).
\end{eqnarray*}
\end{dfn}

\begin{dfn}\label{Defn_NilAmalgam}
Let $R$ be a ring. Let $\fB_1, \fB_2$ be $R$-bimodules.  For all
$n \in \N$, define
\begin{eqnarray*}
\Nil_{-n}(R;\fB_1,\fB_2) &:=& \Nil_0(\Sigma^nR;\Sigma^n\fB_1,\Sigma^n\fB_2)\\
\wt{\Nil}_{-n}(R;\fB_1,\fB_2) &:=&  \wt \Nil_0(\Sigma^nR;\Sigma^n\fB_1,\Sigma^n\fB_2).
\end{eqnarray*}
\end{dfn}

The next two theorems follow from the definitions and
\cite[Theorems 1,3]{Waldhausen_Rings}.

\begin{thm}[Waldhausen]\label{Thm_WaldhausenTensorRing}
Let $R$ be a ring and $\fB$ be an $R$-bimodule. Consider the tensor
ring
\[
 T_R(\fB) := R \oplus \fB \oplus \fB^2 \oplus \fB^3\oplus \cdots~.
\]
Suppose $\fB$ is finitely generated  projective as a left $R$-module
and free as a right $R$-module. Then, for all $n \in \N$, there is a
split monomorphism
\[
\sigma_B~:~\wt{\Nil}_{-n}(R;\fB) \longrightarrow K_{1-n}( T_R(\fB))
\]
given for $n=0$ by the map
\[
\sigma_B~:~\Nil_0(R;\fB) \longrightarrow K_1( T_R(\fB))~;~
[P,\rho] \longmapsto \left[~ T_R(\fB) P, 1-\widehat{\rho} ~\right]~,
\]
where $\widehat{\rho}$ is defined using $\rho$ and
multiplication in $ T_R(\fB)$.

Furthermore, there is a natural decomposition
\[
K_{1-n}( T_R(\fB)) = K_{1-n}(R) \oplus \wt{\Nil}_{-n}(R;\fB).
\]
\end{thm}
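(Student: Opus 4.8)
The plan is to deduce the statement for all $n\in\N$ from the case $n=0$, which is Waldhausen's, by a suspension argument built on the Gersten--Wagoner computation of $K_*(\Sigma^nR)$. For $n=0$ there is nothing to prove beyond citing \cite[Theorems 1, 3]{Waldhausen_Rings}: under the stated hypotheses on $\fB$ one has the natural splitting $K_*(T_R(\fB))=K_*(R)\oplus\wt{\Nil}_{*-1}(R;\fB)$, and evaluating at $*=1$ yields $K_1(T_R(\fB))=K_1(R)\oplus\wt{\Nil}_0(R;\fB)$, the map $\sigma_B$ onto the $\wt{\Nil}$-summand being given by the explicit formula $[P,\rho]\mapsto[T_R(\fB)P,1-\widehat\rho]$ recalled in the Introduction.

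For $n>0$ the idea is to apply the case $n=0$ to the pair $(\Sigma^nR,\Sigma^n\fB)$, so one first checks that the hypotheses persist under suspension. Since $\Sigma^nR=\Sigma^n\Z\xo_\Z R$ and $\Sigma^n\fB=\Sigma^n\Z\xo_\Z\fB$, the bimodule $\Sigma^n\fB$ is obtained from $\fB$ by the simultaneous left and right base change along $\Z\to\Sigma^n\Z$; base change carries finitely generated projective modules to finitely generated projective modules and free modules to free modules, so $\Sigma^n\fB$ is finitely generated projective as a left $\Sigma^nR$-module and free as a right $\Sigma^nR$-module, exactly as required by the $n=0$ case applied over $\Sigma^nR$.

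The one substantive point is the identification of tensor rings
\[
T_{\Sigma^nR}(\Sigma^n\fB)~\cong~\Sigma^n\bigl(T_R(\fB)\bigr)~=~\Sigma^n\Z\xo_\Z T_R(\fB)~.
\]
Additively both sides are the direct sum over $k\geqslant 0$ of the $k$-fold tensor powers of the bimodule in question, and iterating the natural bimodule isomorphism $t_n$ of \fullref{Lem_Transpose} identifies $(\Sigma^n\fB)^{\xo_{\Sigma^nR}k}$ with $\Sigma^n(\fB^{\xo_R k})$ for every $k$. What must be verified is that these iterated transpositions are compatible with concatenation of tensors, i.e.\ with the multiplications of the two tensor algebras; this is a direct diagram chase, since $t_n$ merely slides the central $\Sigma^n\Z$-factor past the $\fB$-factors, and I expect it to be the main — though still routine — obstacle in the write-up.

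Granting this identification, applying the (already established) case $n=0$ to $(\Sigma^nR,\Sigma^n\fB)$ gives a natural splitting
\[
K_1\bigl(T_{\Sigma^nR}(\Sigma^n\fB)\bigr)~=~K_1(\Sigma^nR)\oplus\wt{\Nil}_0(\Sigma^nR;\Sigma^n\fB)
\]
with $\sigma_B$ a split monomorphism onto the second summand. Now rewrite each term: the Gersten--Wagoner isomorphism $K_i(\Sigma^nS)\cong K_{i-n}(S)$, natural in the ring $S$, turns the left-hand side into $K_1(\Sigma^n(T_R(\fB)))\cong K_{1-n}(T_R(\fB))$ and the first summand into $K_{1-n}(R)$, while by \fullref{Defn_NilTensorAlgebra} the second summand is by definition $\wt{\Nil}_{-n}(R;\fB)$. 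This produces the asserted natural decomposition $K_{1-n}(T_R(\fB))=K_{1-n}(R)\oplus\wt{\Nil}_{-n}(R;\fB)$ together with the split monomorphism $\sigma_B\colon\wt{\Nil}_{-n}(R;\fB)\to K_{1-n}(T_R(\fB))$, and naturality of all the isomorphisms invoked gives the naturality claim.
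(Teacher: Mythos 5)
Your proposal is correct and follows essentially the same route as the paper: the paper also cites Waldhausen for the $n=0$ case and then deduces the general case from the chain of equations $K_{1-n}(T_R(\fB))=K_1(\Sigma^n T_R(\fB))=K_1(T_{\Sigma^nR}(\Sigma^n\fB))=K_1(\Sigma^nR)\oplus\wt{\Nil}_0(\Sigma^nR;\Sigma^n\fB)=K_{1-n}(R)\oplus\wt{\Nil}_{-n}(R;\fB)$. You spell out a bit more explicitly the two points the paper leaves implicit --- that the hypotheses on $\fB$ persist under suspension, and that the iterated transposition isomorphisms of \fullref{Lem_Transpose} respect the multiplicative structure of the tensor algebras --- but the argument is the same.
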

For example, the last assertion of the theorem follows from the
equations:
\begin{align*}
K_{1-n}(T_R(\fB)) & = K_1(\Sigma^n T_R(\fB)) \\
& = K_1(T_{\Sigma^nR}(\Sigma^n\fB)) \\
& = K_1(\Sigma^n R) \oplus \wt{\Nil}_0(\Sigma^nR; \Sigma^n \fB) \\
& = K_{1-n}(R) \oplus \wt{\Nil}_{-n}(R; \fB).
\end{align*}

\begin{thm}[Waldhausen]\label{Waldhausen}
Let $R, A_1, A_2$ be rings. Let $R \to A_i$ be ring monomorphisms
such that $A_i = R \oplus \fB_i$ for $R$-bimodules $\fB_i$.
Consider the pushout of rings
\begin{multline*}
A = A_1 *_R A_2 = R \oplus (\fB_1 \oplus \fB_2) \oplus
(\fB_1 \fB_2 \oplus \fB_2   \fB_1) \\ \oplus (\fB_1
\fB_2 \fB_1 \oplus \fB_2 \fB_1 \fB_2)
\oplus \cdots.
\end{multline*}
Suppose each $\fB_i$ is free as a right $R$-module. Then, for all
$n \in \N$, there is a split monomorphism
\[
\sigma_A~:~\wt{\Nil}_{-n}(R;\fB_1,\fB_2) \longrightarrow K_{1-n}(A),
\]
given for $n=0$ by the map
\[
{\Nil}_0(R;\fB_1,\fB_2) \longrightarrow K_1(A)~;~
[P_1,P_2,\rho_1,\rho_2] \longmapsto \left[ (A  P_1)\oplus (A  P_2), \begin{pmatrix} 1 &
\widehat{\rho}_2\\ \widehat{\rho}_1 & 1\end{pmatrix} \right]~,
\]
where $\widehat{\rho}_i$ is  defined using $\rho_i$ and
multiplication in $A_i$ for $i = 1,2$.

Furthermore, there is a natural Mayer--Vietoris type exact
sequence
\[\begin{CD}
\cdots @>{\partial}>> K_{1-n}(R) @>>> K_{1-n}(A_1) \oplus K_{1-n}(A_2) @>>>\\
\dfrac{K_{1-n}(A)}{\wt{\Nil}_{-n}(R;\fB_1,\fB_2)}
@>{\partial}>> K_{-n}(R) @>>> \cdots
\end{CD}\]
\end{thm}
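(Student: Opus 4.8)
The plan is to deduce the statement in degree $1-n$ from Waldhausen's original theorems \cite[Theorems 1 and 3]{Waldhausen_Rings} in degrees $1$ and $0$, applied not to $R \to A_i$ but to the $n$-fold suspension rings $\Sigma^n R \to \Sigma^n A_i$. Recall from \fullref{Defn_NilAmalgam} that $\wt{\Nil}_{-n}(R;\fB_1,\fB_2)$ is by definition $\wt{\Nil}_0(\Sigma^n R;\Sigma^n\fB_1,\Sigma^n\fB_2)$, and that the Gersten--Wagoner construction provides natural isomorphisms $K_i(\Sigma^n S) \cong K_{i-n}(S)$ for every ring $S$ and all $i,n$. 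So once we show that suspension converts the amalgam datum $(R;A_1,A_2)$ into an amalgam datum of the same shape over $\Sigma^n R$, the $n=0$ assertions of Waldhausen transcribe, with a shift of degree by $n$, into the assertions of the theorem, and the displayed formula for $\sigma_A$ in the case $n=0$ is literally Waldhausen's.

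The first point to check is that $\Sigma^n A_i = \Sigma^n R \oplus \Sigma^n\fB_i$ as $\Sigma^n R$-bimodules --- immediate since $\Sigma^n(-) = \Sigma^n\Z \xo_\Z (-)$ is additive --- and that $\Sigma^n\fB_i$ is free as a right $\Sigma^n R$-module: writing $\fB_i \cong \bigoplus_J R$ as right $R$-modules gives $\Sigma^n\fB_i \cong \bigoplus_J \Sigma^n R$ because tensoring with $\Sigma^n\Z$ commutes with direct sums. The crucial point is the identification of rings
\[
\Sigma^n(A_1 *_R A_2) ~\cong~ \Sigma^n A_1 *_{\Sigma^n R} \Sigma^n A_2~.
\]
Here one uses the explicit bimodule normal form $A = R \oplus (\fB_1\oplus\fB_2) \oplus (\fB_1\fB_2 \oplus \fB_2\fB_1) \oplus \cdots$ recorded in the statement (valid because the $\fB_i$ are free, hence flat, as right $R$-modules), applies $\Sigma^n\Z \xo_\Z (-)$ summand by summand, and invokes \fullref{Lem_Transpose} iteratively to identify each $\Sigma^n(\fB_{i_1}\fB_{i_2}\cdots\fB_{i_k})$ with $\Sigma^n\fB_{i_1} \xo_{\Sigma^n R} \cdots \xo_{\Sigma^n R} \Sigma^n\fB_{i_k}$; naturality of the isomorphism $t_n$ guarantees that the multiplication rule (concatenation and collapse of reduced words) is carried over, so that $\Sigma^n A$ acquires precisely the structure of the amalgamated free product $\Sigma^n A_1 *_{\Sigma^n R} \Sigma^n A_2$, which has its own such normal form because $\Sigma^n\fB_i$ is free over $\Sigma^n R$ on the right.

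With these identifications, Waldhausen's decomposition \cite{Waldhausen_Rings} applied to $\Sigma^n R \to \Sigma^n A_i$ produces a split monomorphism $\sigma_A:\wt{\Nil}_0(\Sigma^n R;\Sigma^n\fB_1,\Sigma^n\fB_2) \to K_1(\Sigma^n A)$ and the Mayer--Vietoris sequence for $K_*(\Sigma^n A)$; reading these through \fullref{Defn_NilAmalgam} and the natural Gersten--Wagoner isomorphisms $K_1(\Sigma^n A)\cong K_{1-n}(A)$, $K_1(\Sigma^n A_i)\cong K_{1-n}(A_i)$, $K_1(\Sigma^n R)\cong K_{1-n}(R)$, $K_0(\Sigma^n R)\cong K_{-n}(R)$, and so on (iterating one more suspension where negative degrees of the suspension rings occur), yields exactly the split monomorphism $\sigma_A:\wt{\Nil}_{-n}(R;\fB_1,\fB_2) \to K_{1-n}(A)$ and the stated exact sequence; naturality of the Gersten--Wagoner maps is what makes the connecting homomorphisms of the shifted sequence agree with Waldhausen's. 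I expect the main obstacle to be the middle step: the identity $\Sigma^n(A_1 *_R A_2) \cong \Sigma^n A_1 *_{\Sigma^n R}\Sigma^n A_2$ is not a formal consequence of general facts about pushouts of rings (the functor $\Sigma^n\Z\xo_\Z(-)$ is not a coproduct-type left adjoint on rings), and making it rigorous requires carefully tracking both the bimodule decomposition and the multiplication through \fullref{Lem_Transpose}, using the right-freeness hypothesis on the $\fB_i$ to ensure the normal forms exist on both sides.
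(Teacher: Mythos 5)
Your argument follows exactly the route the paper intends: the theorem is stated in the paper with the remark that it "follows from the definitions and [Theorems 1, 3] of Waldhausen," where the definitions are \fullref{Defn_NilAmalgam}, and the paper illustrates the same mechanism (suspend, apply Waldhausen's degree-one statement, undo via Gersten--Wagoner) for the companion tensor-ring theorem. You correctly pinpoint the one nontrivial ingredient, the ring isomorphism $\Sigma^n(A_1 *_R A_2)\cong \Sigma^n A_1 *_{\Sigma^n R}\Sigma^n A_2$ established via the bimodule normal form together with \fullref{Lem_Transpose}, which is precisely the role that lemma plays in this section.
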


\subsection{The isomorphism for lower $\Nil$-groups}

\begin{thm}\label{Thm_LowerDKR}
Let $R$ be a ring.  Let $\fB_1, \fB_2$ be $R$-bimodules.  Suppose
that $\fB_2=\colim_{\alpha \in I}
\fB_2^\alpha$ is a filtered  colimit of $R$-bimodules $\fB_2^\alpha$,
each of which is a finitely generated  projective left $R$-module. Then, for all $n
\in \N$, there is an induced isomorphism:
\[
\Nil_{-n}(R;\fB_1  \fB_2) \oplus K_{-n}(R) \longra \Nil_{-n}(R;\fB_1,\fB_2).
\]
\end{thm}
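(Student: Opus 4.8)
The plan is to deduce the lower case from the degree-zero Nil--Nil theorem (\fullref{Thm_HigherDKR} with $*=0$) by passing to the suspension ring. Put $S=\Sigma^n R$ and let $\fC_i=\Sigma^n\fB_i$ be the associated $S$-bimodules. By \fullref{Defn_NilAmalgam} one has $\Nil_{-n}(R;\fB_1,\fB_2)=\Nil_0(S;\fC_1,\fC_2)$; by \fullref{Defn_NilTensorAlgebra}, together with the natural isomorphism of $S$-bimodules $\Sigma^n(\fB_1\fB_2)\cong\fC_1\xo_S\fC_2$ supplied by \fullref{Lem_Transpose}, one has $\Nil_{-n}(R;\fB_1\fB_2)\cong\Nil_0(S;\fC_1\fC_2)$; and the Gersten--Wagoner isomorphism recalled above gives $K_{-n}(R)\cong K_0(S)$. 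So it suffices to produce an isomorphism
\[
\Nil_0(S;\fC_1\fC_2)\oplus K_0(S)~\xrightarrow{~\cong~}~\Nil_0(S;\fC_1,\fC_2)
\]
and then read it back through these identifications.

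To apply \fullref{Thm_HigherDKR}(2) to the triple $(S;\fC_1,\fC_2)$, I first check its hypothesis: that $\fC_2=\Sigma^n\fB_2$ is a filtered colimit, indexed by the \emph{same} category $I$, of $S$-bimodules that are finitely generated projective as left $S$-modules. Take $\fC_2^\alpha=\Sigma^n\fB_2^\alpha=\Sigma^n\Z\xo_\Z\fB_2^\alpha$. Since $\Sigma^n\Z\xo_\Z(-)$ is a left adjoint it preserves the filtered colimit $\fB_2=\colim_\alpha\fB_2^\alpha$, so $\fC_2=\colim_\alpha\fC_2^\alpha$ as $S$-bimodules. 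And, transposing the two outer factors, $\fC_2^\alpha$ is isomorphic as a left $S$-module to $S\xo_R\fB_2^\alpha$, which is finitely generated projective over $S$ because $\fB_2^\alpha$ is finitely generated projective over $R$ and this property is preserved by the base change $R\to S$. Note that, exactly as in \fullref{Thm_HigherDKR}, no condition is placed on $\fB_1$.

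With the hypothesis verified, \fullref{Thm_HigherDKR}(2) in degree $0$ gives mutually inverse isomorphisms $\wt{\Nil}_0(S;\fC_1\fC_2)\cong\wt{\Nil}_0(S;\fC_1,\fC_2)$, induced by the exact functors $i$ and $j$ there. Combining this with the decompositions $\Nil_0(S;\fC_1\fC_2)=K_0(S)\oplus\wt{\Nil}_0(S;\fC_1\fC_2)$ and $\Nil_0(S;\fC_1,\fC_2)=K_0(S)\oplus K_0(S)\oplus\wt{\Nil}_0(S;\fC_1,\fC_2)$ recalled in the Introduction --- which come from the split exact inclusions $\PROJ(S)\to\NIL(S;\fC_1\fC_2)$ and $\PROJ(S)\x\PROJ(S)\to\NIL(S;\fC_1,\fC_2)$ --- upgrades the reduced isomorphism to the displayed unreduced one; it restricts to $i_*$ on the $\Nil_0(S;\fC_1\fC_2)$-summand and to the inclusion of one of the two $K_0(S)$-factors on the $K_0(S)$-summand. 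Substituting $S=\Sigma^n R$, $\fC_i=\Sigma^n\fB_i$, and feeding in the identifications of the first paragraph --- in particular $\fC_1\fC_2\cong\Sigma^n(\fB_1\fB_2)$ via \fullref{Lem_Transpose} and $K_0(S)\cong K_{-n}(R)$ --- yields the asserted isomorphism $\Nil_{-n}(R;\fB_1\fB_2)\oplus K_{-n}(R)\longra\Nil_{-n}(R;\fB_1,\fB_2)$.

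The argument is almost entirely a matter of unwinding the definitions of the lower $\Nil$-groups; the only step that calls for a little care is the identification $\Sigma^n\Z\xo_\Z\fB_2^\alpha\cong S\xo_R\fB_2^\alpha$ of \emph{left} $S$-modules, which is what lets us recognize $\fC_2^\alpha$ as finitely generated projective over $S$, together with the bookkeeping needed to confirm that the composite of all the identifications above really is a map of the asserted form. Both are routine given the naturality already built into \fullref{Lem_Transpose} and \fullref{Thm_HigherDKR}, so I do not anticipate any genuine obstacle --- the content of the theorem lies entirely in \fullref{Thm_HigherDKR}.
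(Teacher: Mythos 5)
Your proof is correct and follows essentially the same route as the paper's: pass to the suspension ring $\Sigma^n R$ via Definitions \ref{Defn_NilTensorAlgebra} and \ref{Defn_NilAmalgam}, invoke Lemma \ref{Lem_Transpose} to recognize $\Sigma^n(\fB_1\fB_2)$ as $\Sigma^n\fB_1\otimes_{\Sigma^n R}\Sigma^n\fB_2$, and then apply Theorem \ref{Thm_HigherDKR} in degree $0$. You are in fact somewhat more careful than the paper's one-line proof, which leaves implicit both the verification that the filtered-colimit hypothesis persists after applying $\Sigma^n\Z\otimes_\Z(-)$ and the bookkeeping needed to pass from the reduced isomorphism of Theorem \ref{Thm_HigherDKR} to the unreduced form stated here.
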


\begin{proof}
Let $n \in \N$. By \fullref{Lem_Transpose} and \fullref{Thm_HigherDKR}, there are induced isomorphisms:
\begin{multline*}
\Nil_{-n}(R;\fB_1  \fB_2) \oplus K_{-n}(R) =
\Nil_0(\Sigma^nR;\Sigma^n(\fB_1   \fB_2)) \oplus K_0\Sigma^n(R)\\
\longrightarrow
\Nil_0(\Sigma^n R; \Sigma^n \fB_1 \xo_{\Sigma^n R} \Sigma^n
\fB_2) \oplus K_0\Sigma^n(R)\\
\longra
\Nil_0(\Sigma^nR;\Sigma^n\fB_1,\Sigma^n\fB_2) =
\Nil_{-n}(R;\fB_1,\fB_2).
\proved\end{multline*}
\end{proof}

\section{Applications}\label{Applications}

We indicate some applications of our main theorem (\ref{maink}).  In \fullref{First} we prove \fullref{first}(ii),
which describes the restrictions of the maps
$$\theta_!~:~K_*(R[\ol{G}]) \to
K_*(R[G])~,~\theta^!~:~K_*(R[G]) \to K_*(R[\ol{G}])$$ to the
$\wt{\Nil}$-terms, with $\theta:\,\ol{G} \to G$ the
inclusion of the canonical index 2 subgroup $\ol{G}$ for any
group $G$ over $D_\infty$. In \fullref{Second} we give the first known
example of a non-zero $\Nil$-group occurring in the $K$-theory of an
integral group ring of an amalgamated free product.  In
\fullref{Third} we sharpen the Farrell--Jones Conjecture in
$K$-theory, replacing the family of virtually cyclic groups by the
smaller family of finite-by-cyclic groups.  In \fullref{Subsec_PSL2Z}
we compute the $K_*(R[\g])$ for the modular group $\g = PSL_2(\Z)$.

\subsection{Algebraic $K$-theory over $D_\infty$}\label{First}

The overall goal here is to show that the abstract
isomorphisms $i_*$ and $j_*$ coincide with the restrictions of the
induction and transfer maps $\theta_!$ and $\theta^!$ in the group
ring setting.

\subsubsection{Twisting}
We start by recalling the algebraic $K$-theory of twisted polynomial
rings.

\begin{stt}
Consider any (unital, associative) ring $R$ and any ring
automorphism $\alpha:\,R \to R$. Let $t$ be an indeterminate over $R$
such that
\[rt~=~t\alpha(r)~~(r \in R)~.\]
For any $R$-module $P$, let $tP := \{tx ~|~ x \in P\}$ be the set
with left $R$-module structure
\[tx + t y~=~t (x+y)~,~r(tx)~=~t(\alpha(r)x) \in tP~.\]
Further endow the left $R$-module $tR$ with the $R$-bimodule
structure
\[R \times tR \times R \longrightarrow tR~;~(q,tr,s) \longmapsto t
\alpha(q)rs~.\]
The $\Nil$-category of $R$ with respect to $\alpha$ is the exact
category defined by
\[\NIL(R,\alpha)~:=~\NIL(R;tR).\]
The objects $(P,\rho)$ consist of any finitely generated  projective $R$-module $P$
and any nilpotent morphism $\rho:\,P \to tP=tRP$.  The
$\Nil$-groups are written
\[\Nil_*(R,\alpha)~:=~\Nil_*(R;tR)~,~\wt{\Nil}_*(R,\alpha)~:=~
\wt{\Nil}_*(R;tR)~,\] so that
\[\Nil_*(R,\alpha)~=~K_*(R) \oplus \wt{\Nil}_*(R,\alpha)~.\]
\end{stt}

\begin{stt}
The tensor algebra on $tR$ is the $\alpha$-twisted polynomial
extension of $R$
\[T_R(tR)~=~R_{\alpha}[t]~=~\sum\limits^{\infty}_{k=0}t^kR~.\]
Given an $R$-module $P$ there is induced an $R_{\alpha}[t]$-module
$$R_{\alpha}[t]\otimes_RP~=~P_{\alpha}[t]$$
whose elements are finite linear combinations
$\sum\limits^{\infty}_{j=0}t^jx_j$ ($x_j \in P$). Given $R$-modules
$P,Q$ and an $R$-module morphism $\rho:\,P \to tQ$, define its
extension as the $R_{\alpha}[t]$-module morphism
\[
\widehat{\rho}~=~t\rho~:~P_{\alpha}[t] \longrightarrow Q_{\alpha}[t]~;~
\sum\limits^{\infty}_{j=0}t^j x_j
\longmapsto \sum\limits^{\infty}_{j=0}t^j \rho(x_j)~.
\]
\end{stt}

\begin{stt}
Bass~\cite{Bass}, Farrell--Hsiang~\cite{FarrellHsiang}, and
Quillen~\cite{Grayson} give decompositions:
\begin{eqnarray*}
K_n(R_{\alpha}[t]) &=& K_n(R) \oplus \wt{\Nil}_{n-1}(R,\alpha)~,\\
K_n(R_{\alpha\inv}[t\inv]) &=& K_n(R) \oplus \wt{\Nil}_{n-1}(R,\alpha\inv)~,\\
K_n(R_{\alpha}[t,t\inv]) &=& K_n(1-\alpha:\,R \to R)\oplus
\wt{\Nil}_{n-1}(R,\alpha)\oplus \wt{\Nil}_{n-1}(R,\alpha\inv)~.
\end{eqnarray*}
In particular for $n = 1$, by \fullref{Thm_WaldhausenTensorRing}, there are defined split
monomorphisms:
\begin{eqnarray*}
\sigma^+_B &:& \wt{\Nil}_0(R,\alpha) \longrightarrow K_1(R_{\alpha}[t])~;~
 [P,\rho] \longmapsto \left[ P_{\alpha}[t], 1-t\rho \right]~,\\[1ex]
\sigma^-_B &:& \wt{\Nil}_0(R,\alpha\inv) \longrightarrow K_1(R_{\alpha\inv}[t\inv])~;~
[P,\rho] \longmapsto \left[ P_{\alpha\inv}[t\inv], 1-t\inv\rho \right]~,\\[1ex]
\sigma_B &=& \begin{pmatrix}\psi^+\sigma^+_B & \psi^-\sigma^-_B\end{pmatrix}~:~\wt{\Nil}_0(R,\alpha)\oplus
\wt{\Nil}_0(R,\alpha\inv) \longrightarrow K_1(R_{\alpha}[t,t\inv])~;\\
& &  ([P_1,\rho_1],[P_2,\rho_2]) \longmapsto \left[ (P_1\oplus P_2)_{\alpha}[t,t\inv], \begin{pmatrix}1-t\rho_1 & 0\\ 0 & 1-t\inv\rho_2 \end{pmatrix} \right]~.
\end{eqnarray*}
These extend to all integers $n \leqslant 1$ by the
suspension isomorphisms of \fullref{Sec_Lower}.
\end{stt}

\subsubsection{Scaling}
Next, consider the effect an inner automorphism on $\alpha$.

\begin{stt}
Suppose $\alpha,\alpha':\,R \to R$ are automorphisms satisfying
$$\alpha'(r)~=~u\alpha(r)u\inv\in R~~(r \in R)$$
for some unit $u \in R$, and that $t'$ is an indeterminate over $R$
satisfying
$$rt'~=~t'\alpha'(r)~~(r \in R)~.$$
Denote the canonical inclusions
\[
\begin{array}{ll}
\psi^+~:~R_{\alpha}[t] \longrightarrow R_{\alpha}[t,t\inv] &
\psi^-~:~R_{\alpha\inv}[t\inv] \longrightarrow R_{\alpha}[t,t\inv]\\[1ex]
\psi'^+~:~R_{\alpha'}[t'] \longrightarrow R_{\alpha'}[t',t'{}\inv] &
\psi'^-~:~R_{\alpha'{}\inv}[t'{}\inv] \longrightarrow R_{\alpha'}[t',t'{}\inv]~.
\end{array}
\]
\end{stt}

\begin{stt}\label{scaling1}
The various polynomial rings are related by \textbf{scaling
isomorphisms}
$$\begin{array}{l}
\beta^+_u~:~R_{\alpha}[t] \longrightarrow R_{\alpha'}[t']~;~
t \longmapsto t'u~,\\[1ex]
\beta^-_u~:~R_{\alpha\inv}[t\inv] \longrightarrow R_{\alpha'{}\inv}[t'{}\inv]~;~
t\inv \longmapsto u\inv t'{}\inv~,\\[1ex]
\beta_u~:~R_{\alpha}[t,t\inv] \longrightarrow R_{\alpha'}[t',t'{}\inv]~;~
t \longmapsto t'u
\end{array}$$
satisfying the equations
\begin{eqnarray*}
\beta_u \circ \psi^+ &=& \psi'^+ \circ \beta_u^+ ~:~ R_\alpha[t] \longrightarrow R_{\alpha'}[t',t'{}\inv]\\[1ex]
\beta_u \circ \psi^- &=& \psi'^- \circ \beta_u^- ~:~ R_{\alpha\inv}[t\inv] \longrightarrow R_{\alpha'}[t',t'{}\inv] ~.
\end{eqnarray*}
\end{stt}

\begin{stt}\label{scaling2}
There are corresponding scaling isomorphisms of exact categories
\begin{eqnarray*}
\beta^+_u &:& \NIL(R,\alpha) \longrightarrow \NIL(R,\alpha')~;~
(P,\rho) \longmapsto (P,t'u t\inv \rho:P\to t'P)\\[1ex]
\beta^-_u &:& \NIL(R,\alpha\inv) \longrightarrow \NIL(R,\alpha'{}\inv) ~;~
(P,\rho) \longmapsto (P,t'{}\inv u t \rho:P'\to t'{}\inv P')~,
\end{eqnarray*}
where we mean
\begin{eqnarray*}
(t'u t\inv \rho)(x) &:=& t' (uy) ~\text{with}~ \rho(x) = ty,\\
(t'{}\inv u t\rho)(x) &:=& t'{}\inv (uy) ~\text{with}~ \rho(x) = t\inv y ~.
\end{eqnarray*}
\end{stt}

\begin{stt}\label{scaling3}
For all $n \leqslant 1$, the various scaling isomorphisms are related by
equations
\begin{eqnarray*}
(\beta_u^+)_* \circ \sigma_B^+ &=& \sigma_B'^+ \circ \beta_u^+ ~:~
 \wt{\Nil}_{n-1}(R,\alpha) \longrightarrow K_n(R_{\alpha'}[t'])\\[1ex]
(\beta_u^-)_* \circ \sigma_B^- &=& \sigma_B'^- \circ \beta_u^- ~:~
 \wt{\Nil}_{n-1}(R,\alpha\inv) \longrightarrow K_n(R_{\alpha'{}\inv}[t'{}\inv])\\[1ex]
(\beta_u)_* \circ \sigma_B &=& \sigma_B' \circ \left(\begin{smallmatrix}\beta_u^+ & 0\\ 0 & \beta_u^-\end{smallmatrix}\right) ~:~
 \wt{\Nil}_{n-1}(R,\alpha)\oplus \wt{\Nil}_{n-1}(R,\alpha\inv) \longrightarrow K_n(R_{\alpha'}[t',t'{}\inv]) ~.
\end{eqnarray*}
\end{stt}

\subsubsection{Group rings}
We now adapt these isomorphisms to the case of group rings $R[G]$ of
groups $G$ over the infinite dihedral group $D_\infty$. In order to
prove \fullref{lem:induction} and \fullref{transfer}, the overall
idea is to transform information about the product $t_2 t_1$ arising
from the transposition $\fB_2 \otimes \fB_1$ into information about
the product $t_2\inv t_1\inv$ arising in the second
$\wt{\Nil}$-summand of the twisted Bass decomposition. We
continue to discuss the ingredients in a sequence of statements.

\begin{stt}
Let $F$ be a group, and let $\alpha:\,F \to F$ be an automorphism.
Recall that the injective HNN extension $F\rtimes_{\alpha} \Z$ is
the set $F \times \Z$ with group multiplication
$$(x,n)(y,m)~:=~(\alpha^m(x)y,m+n) \in F\rtimes_{\alpha}\Z~.$$
Then, for any ring $R$, writing $t=(1_F,1)$ and $(x,n)=t^nx \in
F\rtimes_{\alpha}\Z$, we have
$$R[F\rtimes_{\alpha} \Z]~=~R[F]_{\alpha}[t,t\inv]~.$$
\end{stt}

\begin{stt}\label{Statement_OuterAutomorphisms}
Consider any group $G=G_1 *_F G_2$ over $D_\infty$, where
$$F~=~G_1 \cap G_2~\subset~\ol{G}
~=~F\rtimes_{\alpha}\Z~=~F\rtimes_{\alpha'}\Z ~\subset~
G~=~G_1*_FG_2~.$$ Fix elements $t_1 \in G_1-F$, $t_2\in G_2-F$, and
define elements
$$t~:=~t_1t_2 \in \ol{G},~t'~:=~t_2t_1 \in \ol{G}~,~u~:=~(t')\inv t\inv \in F~.$$
Define the automorphisms
\begin{eqnarray*}
\alpha_1 &:& F \longra F~;~x \longmapsto (t_1)\inv x t_1~,\\
\alpha_2 &:& F \longra F~;~x \longmapsto (t_2)\inv x t_2~,\\
\alpha ~:=~ \alpha_2\circ\alpha_1 &:& F \longra F~;~x \longmapsto t\inv xt~,\\
\alpha' ~:=~ \alpha_1\circ\alpha_2 &:& F \longra F~;~x \longmapsto t'{}\inv xt'
\end{eqnarray*}
such that
\[
xt~=~t \alpha(x)~,\quad xt'=~t'\alpha'(x)~,\quad
\alpha'(x)~=~u \alpha\inv(x)u\inv~~(x \in F)~.
\]
\textbf{In particular, note $\alpha'$ and $\alpha\inv$ (not
$\alpha$) are related by inner automorphism by $u$.}
\end{stt}

\begin{stt}\label{Statement_RingMaps}
Denote the canonical inclusions
\[
\begin{array}{ll}
\psi^+~:~R_{\alpha}[t] \longrightarrow R_{\alpha}[t,t\inv] &
\psi^-~:~R_{\alpha\inv}[t\inv] \longrightarrow R_{\alpha}[t,t\inv]\\[1ex]
\psi'^+~:~R_{\alpha'}[t'] \longrightarrow R_{\alpha'}[t',t'{}\inv] &
\psi'^-~:~R_{\alpha'{}\inv}[t'{}\inv] \longrightarrow R_{\alpha'}[t',t'{}\inv]~.
\end{array}
\]
The inclusion $R[F] \to R[G]$ extends to ring monomorphisms
\[
\begin{array}{ll}
\theta~:~R[F]_{\alpha}[t,t\inv] \longrightarrow R[G] &
\theta'~:~R[F]_{\alpha'}[t',t'{}\inv] \longrightarrow R[G]
\end{array}
\]
such that
\[
\mathrm{im}(\theta)~=~\mathrm{im}(\theta')~=~ R[\ol{G}]
\subset R[G]~=~R[G_1]*_{R[F]}R[G_2]~.
\]
Furthermore, the inclusion $R[F] \to R[G]$ extends to ring
monomorphisms
\[
\begin{array}{ll}
\phi = \theta \circ \psi^+~:~R[F]_{\alpha}[t] \longra R[G] & \phi' = \theta' \circ \psi'^+~:~R[F]_{\alpha'}[t'] \longra R[G]~.
\end{array}
\]
\end{stt}

\begin{stt}\label{scalingG1}
By \fullref{scaling1}, there are defined scaling isomorphisms
of rings
\begin{eqnarray*}
\beta^+_u &:& R[F]_{\alpha\inv}[t\inv]
\longrightarrow
R[F]_{\alpha'}[t'] ~;~t\inv \longmapsto t'u~,\\[1ex]
\beta^-_u &:& R[F]_{\alpha}[t] \longrightarrow
R[F]_{\alpha'{}\inv}[t'{}\inv] ~;~
t \longmapsto u\inv t'{}\inv~,\\[1ex]
\beta_u &:& R[F]_{\alpha}[t,t\inv]
\longrightarrow  R[F]_{\alpha'}[t',t'{}\inv]~;~
t \longmapsto u\inv t'{}\inv
 \end{eqnarray*}
which satisfy the equations
\begin{eqnarray*}
\beta_u \circ \psi^- &=& \psi'^+ \circ \beta_u^+ ~:~ R[F]_{\alpha\inv}[t\inv] \longrightarrow R[F]_{\alpha'}[t',t'{}\inv]\\[1ex]
\beta_u \circ \psi^+ &=& \psi'^- \circ \beta_u^- ~:~ R[F]_{\alpha}[t] \longrightarrow R[F]_{\alpha'}[t',t'{}\inv]\\[1ex]
\theta &=& \theta' \circ \beta_u ~:~ R[F]_{\alpha}[t,t\inv] \longrightarrow R[G]~.
\end{eqnarray*}
\end{stt}

\begin{stt}\label{scalingG2}
By \fullref{scaling2}, there are scaling isomorphisms of exact
categories
\begin{eqnarray*}
\beta^+_u &:& \NIL(R[F],\alpha\inv) \longrightarrow \NIL(R[F],\alpha')~;~
(P,\rho) \longmapsto (P,t'u t\rho)~,\\[1ex]
\beta^-_u &:& \NIL(R[F],\alpha) \longrightarrow \NIL(R[F],\alpha'{}\inv)~;~
(P,\rho) \longmapsto (P,t'{}\inv u t\inv\rho)~.
\end{eqnarray*}
\end{stt}

\begin{stt}\label{scalingG3}
By \fullref{scaling3}, for all $n \leqslant 1$, the various scaling
isomorphisms are related by:
\begin{eqnarray*}
(\beta_u^+)_* \circ \sigma_B^- &=& \sigma_B'^+ \circ \beta_u^+ ~:~ \wt{\Nil}_{*-1}(R[F],\alpha\inv) \longrightarrow K_*(R[F]_{\alpha'}[t'])\\[1ex]
(\beta_u^-)_* \circ \sigma_B^+ &=& \sigma_B'^- \circ \beta_u^- ~:~ \wt{\Nil}_{*-1}(R[F],\alpha) \longrightarrow K_*(R[F]_{\alpha'{}\inv}[t'{}\inv])\\[1ex]
(\beta_u)_* \circ \sigma_B &=& \sigma_B' \circ \left(\begin{smallmatrix} 0 & \beta^+_u \\
\beta^-_u & 0 \end{smallmatrix}\right) ~:~ \wt{\Nil}_{*-1}(R[F],\alpha)\oplus \wt{\Nil}_{*-1}(R[F],\alpha\inv) \longrightarrow K_*(R[F]_{\alpha'}[t',t'{}\inv]) ~.
\end{eqnarray*}
\end{stt}

\subsubsection{Transposition}
Next, we study the effect of transposition of the bimodules $\fB_1$
and $\fB_2$ in order to relate $\alpha$ and $\alpha'$. In
particular, there is no mention of $\alpha\inv$ in this section.

\begin{stt}
The $R[F]$-bimodules
\[
\fB_1~=~R[G_1-F]~=~t_1R[F]~,~\fB_2~=~R[G_2-F]~=~t_2R[F]
\]
are free left and right $R[F]$-modules of rank one. The
$R[F]$-bimodule isomorphisms
\begin{align*}
&\fB_1\otimes_{R[F]}\fB_2 \longra tR[F]~;~t_1x_1\otimes t_2x_2 \longmapsto
t \alpha_2(x_1)x_2\\
&\fB_2\otimes_{R[F]}\fB_1 \longra t'R[F]~;~t_2x_2\otimes t_1x_1 \longmapsto
t' \alpha_1(x_2)x_1
\end{align*}
shall be used to make the identifications
$$\begin{array}{ll}
\fB_1\otimes_{R[F]}\fB_2~=~tR[F]~,&\NIL(R[F];\fB_1\otimes_{R[F]}\fB_2)~=~\NIL(R[F],\alpha)~,\\[1ex]
\fB_2\otimes_{R[F]}\fB_1~=~t'R[F]~,&\NIL(R[F];\fB_2\otimes_{R[F]}\fB_1)~=~\NIL(R[F],\alpha')~.
\end{array}$$
\end{stt}

\begin{stt}
\fullref{maink} gives inverse isomorphisms
\begin{eqnarray*}
i_* &:& \wt{\Nil}_*(R[F],\alpha) \longrightarrow  \wt{\Nil}_*(R[F];\fB_1,\fB_2)~,\\
j_* &:& \wt{\Nil}_*(R[F];\fB_1,\fB_2) \longrightarrow
\wt{\Nil}_*(R[F],\alpha)
\end{eqnarray*}
which for $*=0$ are given by
\begin{eqnarray*}
i_* &:& \wt{\Nil}_0(R[F],\alpha) \longrightarrow \wt{\Nil}_0(R[F];\fB_1,\fB_2)~;~
[P,\rho] \longmapsto [P, t_2P,\rho,1]~,\\
j_* &:& \wt{\Nil}_0(R[F];\fB_1,\fB_2)
\longrightarrow \wt{\Nil}_0(R[F],\alpha)~;~
[P_1,P_2,\rho_1,\rho_2]\longmapsto [P_1,\rho_2\circ \rho_1]~.
\end{eqnarray*}
\end{stt}

\begin{stt}
Similarly, there are defined inverse isomorphisms
\begin{eqnarray*}
i'_* &:& \wt{\Nil}_*(R[F],\alpha') \longrightarrow \wt{\Nil}_*(R[F];\fB_2,\fB_1)~,\\
j'_* &:& \wt{\Nil}_*(R[F];\fB_2,\fB_1)
\longrightarrow \wt{\Nil}_*(R[F],\alpha')
\end{eqnarray*}
which for $*=0$ are given by
\begin{eqnarray*}
i'_* &:& \wt{\Nil}_0(R[F],\alpha') \longrightarrow \wt{\Nil}_0(R[F];\fB_2,\fB_1)~;~
[P',\rho'] \longmapsto [P',t_1P',\rho',1]~,\\
j'_* &:& \wt{\Nil}_0(R[F];\fB_2,\fB_1) \longrightarrow
\wt{\Nil}_0(R[F],\alpha')~;~
[P_2,P_1,\rho_2,\rho_1]\longmapsto [P_2,\rho_1\circ \rho_2]~.
\end{eqnarray*}
\end{stt}

\begin{stt}\label{Statement_Transposition}
The transposition isomorphism of exact categories
\[
\tau_A ~:~\NIL(R[F];\fB_1,\fB_2) \longrightarrow \NIL(R[F];\fB_2,\fB_1)~;~
(P_1,P_2,\rho_1,\rho_2) \longmapsto (P_2,P_1,\rho_2,\rho_1)
\]
induces isomorphisms
\begin{eqnarray*}
\tau_A &:& \Nil_*(R[F];\fB_1,\fB_2)~\cong~\Nil_*(R[F];\fB_2,\fB_1)~,\\[1ex]
\tau_A &:& \wt{\Nil}_*(R[F];\fB_1,\fB_2)~\cong~\wt{\Nil}_*(R[F];\fB_2,\fB_1)~.
\end{eqnarray*}
Note, by \fullref{maink}, the composites
\begin{eqnarray*}
\tau_B &:=& j'_* \circ \tau_A \circ i_*~:~\wt{\Nil}_*(R[F],\alpha) \longrightarrow
\wt{\Nil}_*(R[F],\alpha')~,\\[1ex]
\tau'_B &:=& j_* \circ \tau\inv_A \circ i'_*~:~\wt{\Nil}_*(R[F],\alpha') \longrightarrow
\wt{\Nil}_*(R[F],\alpha)
\end{eqnarray*}
are inverse isomorphisms, which for $*=0$ are given by
\begin{eqnarray*}
\tau_B &:& \wt{\Nil}_0(R[F],\alpha) \longrightarrow
\wt{\Nil}_0(R[F],\alpha')~;~[P,\rho] \longmapsto
[t_2P,t_2\rho]~,\\[1ex]
\tau'_B &:& \wt{\Nil}_0(R[F],\alpha') \longrightarrow
\wt{\Nil}_0(R[F],\alpha)~;~[P',\rho'] \longmapsto
[t_1P',t_1\rho']~.
\end{eqnarray*}
Furthermore, note that the various transpositions are related by the
equation
\[
\tau_A \circ i_* ~=~ i'_* \circ \tau_B ~:~ \wt{\Nil}_*(R[F],\alpha) \longrightarrow \wt{\Nil}_*(R[F];\fB_2,\fB_1)~.
\]
\end{stt}

\begin{stt}\label{Statement_SigmaA}
Recall from \fullref{Waldhausen} that there is a split
monomorphism
\[
\sigma_A~:~\wt{\Nil}_{n-1}(R[F];\fB_1,\fB_2) \longra K_n(R[G])
\]
such that the $n=1$ case is given by
\begin{multline*}
\sigma_A~:~\wt{\Nil}_0(R[F];\fB_1,\fB_2) \longra K_1(R[G])~;\\
[P_1,P_2,\rho_1,\rho_2] \longmapsto \left[ P_1[G] \oplus P_2[G],
\begin{pmatrix} 1 & t_2\rho_2 \\ t_1\rho_1 & 1\end{pmatrix}
\right]~.
\end{multline*}
Elementary row and column operations produce an equivalent
representative:
\[
\begin{pmatrix}
1 & - t_2 \rho_2\\
0 & 1
\end{pmatrix}
\begin{pmatrix}
1 & t_2 \rho_2\\
t_1 \rho_1 & 1
\end{pmatrix}
\begin{pmatrix}
1 & 0\\
-\rho_1 & 1
\end{pmatrix}
= \begin{pmatrix}
1 - t \rho_2 \rho_1 & 0\\
0 & 1
\end{pmatrix}~.
\]
Thus the $n=1$ case satisfies the equations (similarly
for the second equality):
\[
\sigma_A[P_1,P_2,\rho_1,\rho_2] ~=~ \left[ P_1[G], 1-t\rho_2\rho_1 \right]
~=~ \left[ P_2[G], 1-t'\rho_1\rho_2 \right]~.
\]
Therefore for all $n \leqslant 1$, the split monomorphism $\sigma_A'$,
associated to the amalgamated free product $G=G_2*_FG_1$, satisfies
the equation
\[
\sigma_A~=~\sigma_A' \circ \tau_A~:~
\wt{\Nil}_{n-1}(R[F];\fB_1,\fB_2) \longra K_n(R[G]) ~.
\]
\end{stt}

\subsubsection{Induction}
We analyze the effect of induction maps on $\wt{\Nil}$-summands.

\begin{stt}\label{Statement_meat}
Recall from \fullref{maink} the isomorphism
\begin{multline*}
i_*~:~\wt{\Nil}_{*-1}(R[F],\alpha)~=~
\wt{\Nil}_{*-1}(R[F];\fB_1\otimes_{R[F]}\fB_2)
\longra \wt{\Nil}_{*-1}(R[F];\fB_1,\fB_2)~;\\
[P,\rho] \longmapsto [P,t_2P,\rho,1]~.
\end{multline*}
Let $(P,\rho)$ be an object in the exact category
$\NIL(R[F],\alpha)$. By \fullref{Statement_SigmaA}, note
\[
\sigma_A i_* [P,\rho] ~=~ \sigma_A [P,t_2 P, \rho,1] ~=~ [P[G],1-t\rho] ~=~ \phi_!\sigma_B^+ [P,\rho]~.
\]
Thus, for all $n \leqslant 1$, we obtain the key equality
\[
\sigma_A \circ i_* ~=~ \phi_! \circ \sigma_B^+ ~:~ \wt{\Nil}_{n-1}(R[F],\alpha) \longrightarrow K_n(R[G]) ~.
\]
\end{stt}


\begin{lem}\label{lem:induction}
Let $n \leqslant 1$ be an integer.
The split monomorphisms $\sigma_A,\sigma'_A,\sigma_B^+,\sigma_B'^+$ are related by a commutative diagram
\[\xymatrix@R-5pt{
\wt{\Nil}_{n-1}(R[F],\alpha)
~\ar@{>->}[rrr]^-{\di{\sigma^+_B}} \ar[dddddd]_-{\di{\tau_B}}^-{\di{\cong}}
\ar[ddr]^-{\di{i_*}}_-{\di{\cong}} &&&
K_n(R[F]_{\alpha}[t]) \ar[dddl]_-{\di{\phi_!}} \ar[dd]^-{\di{\psi_!^+}} \\
&&&\\
&\wt{\Nil}_{n-1}(R[F];\fB_1,\fB_2)
~\ar@{>->}[dr]^-{\di{\sigma_A}}
\ar[dd]^-{\di{\cong}}_-{\di{\tau_A}}
&& K_n(R[F]_{\alpha}[t,t\inv])\ar[dl]_-{\di{\theta_!}}
\ar[dd]_-{\di{\cong}}^-{\di{(\beta_u)_!}}\\
& & K_n(R[G])  & \\
&\wt{\Nil}_{n-1}(R[F];\fB_2,\fB_1)
~\ar@{>->}[ur]^-{\di{\sigma'_A}}  &&K_n(R[F]_{\alpha'}[t',t'{}\inv])\ar[ul]_-{\di{\theta'_!}}
\\
&&&\\
\wt{\Nil}_{n-1}(R[F],\alpha')
~\ar@{>->}[rrr]^-{\di{\sigma_B'^+}} \ar[uur]^-{\di{i'_*}}_-{\di{\cong}} &&&
K_n(R[F]_{\alpha'}[t'])\ar[uuul]^-{\di{\phi_!'}} \ar[uu]_-{\di{\psi_!'^+}}}
\]
\end{lem}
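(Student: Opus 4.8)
The plan is to prove the lemma by cutting the displayed diagram into its seven elementary cells and recognising each one as an identity already established in \fullref{First}. Since every arrow marked $\cong$ (namely $\tau_B$, $i_*$, $\tau_A$, $i'_*$ and $(\beta_u)_!$) is an isomorphism, commutativity of the whole diagram is equivalent to commutativity of each individual cell, and it is harmless that several cells are triangles rather than squares. First I would note that all the arrows $\phi_!,\psi_!^+,\theta_!,(\beta_u)_!,\phi'_!,\psi'^+_!,\theta'_!$ are extension of scalars along ring homomorphisms, hence induce maps on $K_n$ for every $n\leqslant 1$ (using the suspension rings of \fullref{Sec_Lower} in the negative degrees), so the diagram makes sense; then I would verify the cells one at a time.

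The two cells on the far left carry the real content. The upper one asserts $\sigma_A\circ i_* = \phi_!\circ\sigma_B^+$, which is precisely the key equality of \fullref{Statement_meat}. The lower one asserts $\sigma'_A\circ i'_* = \phi'_!\circ\sigma_B'^+$; I would obtain this as the very same assertion applied to the group $G=G_2*_FG_1$ over $D_\infty$. Composing $p$ with the flip automorphism of $D_\infty=\Z_2*\Z_2$ interchanges $G_1\leftrightarrow G_2$ and $t_1\leftrightarrow t_2$, hence $t\leftrightarrow t'$, $\alpha\leftrightarrow\alpha'$, $\fB_1\leftrightarrow\fB_2$, and correspondingly $i_*\leftrightarrow i'_*$, $\sigma_B^+\leftrightarrow\sigma_B'^+$, $\phi\leftrightarrow\phi'$, while $F$ is unchanged; so the primed cell is a literal instance of \fullref{Statement_meat} under this symmetry and requires no fresh computation.

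For the remaining cells: the central quadrilateral $\sigma_A=\sigma'_A\circ\tau_A$ is the last equation of \fullref{Statement_SigmaA}, and the hexagonal cell $\tau_A\circ i_* = i'_*\circ\tau_B$ is the last equation of \fullref{Statement_Transposition}. The three cells along the right-hand side are pure functoriality of induction along a composite of ring maps: from the factorisations $\phi=\theta\circ\psi^+$ and $\phi'=\theta'\circ\psi'^+$ of \fullref{Statement_RingMaps} we get $\phi_!=\theta_!\circ\psi^+_!$ and $\phi'_!=\theta'_!\circ\psi'^+_!$, and from $\theta=\theta'\circ\beta_u$ of \fullref{scalingG1} we get $\theta_!=\theta'_!\circ(\beta_u)_!$. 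Pasting these seven identities together yields the lemma.

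I do not anticipate a genuine obstacle: the heavy lifting has already been done in the preparatory statements, and this proof is essentially bookkeeping. The one point I would spell out carefully is the symmetry argument used for the lower-left cell, namely checking that every ingredient of \fullref{Statement_meat} — the bimodules $\fB_1,\fB_2$, the chosen coset representatives $t_i\in G_i-F$, the automorphisms $\alpha,\alpha'$, the functors $i_*,i'_*$, the split monomorphisms $\sigma_B^\pm,\sigma_A$, and the ring maps $\phi,\phi'$ — is natural under interchange of the indices $1$ and $2$, so that the primed cell is not merely analogous to the unprimed one but an actual special case of it.
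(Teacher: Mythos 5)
Your proof is correct and follows essentially the same route as the paper's: both decompose the diagram into seven elementary cells and identify each with one of the preparatory identities from Statements 4.10 through 4.17, namely $\phi_!=\theta_!\circ\psi^+_!$ and $\phi'_!=\theta'_!\circ\psi'^+_!$ (ring factorisations), $\theta_!=\theta'_!\circ(\beta_u)_!$ (scaling), $\tau_A\circ i_*=i'_*\circ\tau_B$ (transposition), $\sigma_A=\sigma'_A\circ\tau_A$ (row/column operations), and the two versions of the key equality $\sigma_A\circ i_*=\phi_!\circ\sigma^+_B$. The only place you go beyond the paper is in spelling out why the primed key equality $\sigma'_A\circ i'_*=\phi'_!\circ\sigma'^+_B$ holds --- the paper tacitly reads both the unprimed and primed cases off Statement 4.17 by symmetry, while you make the relabelling $1\leftrightarrow 2$ under the flip automorphism of $D_\infty$ explicit; this is a minor but harmless expansion, not a different argument.
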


\begin{proof}
Commutativity of the various parts follow from the following implications:
\begin{itemize}
\item
\fullref{Statement_RingMaps} gives $\phi_! = \theta_!
\circ \psi_!^+$ and $\phi_!' = \theta_!' \circ \psi_!'^+$

\item
\fullref{scalingG1} gives $\theta_! = \theta_!' \circ
(\beta_u)_!$

\item
\fullref{Statement_Transposition} gives $\tau_A \circ
i_* = i_*' \circ \tau_B$

\item
\fullref{Statement_SigmaA} gives $\sigma_A = \sigma_A'
\circ \tau_A$

\item
\fullref{Statement_meat} gives $\sigma_A \circ i_* =
\phi_! \circ \sigma_B^+$ and $\sigma_A' \circ i_*' = \phi_!'
\circ \sigma_B'^+$.\qedhere
\end{itemize}
\end{proof}

Observe the action of $G/\ol{G}$ on $K_n(R[G])$ is inner, hence is trivial.  However, the action of $C_2 = G/\ol{G}$ on $K_n(R[\ol{G}])$ is outer, induced by, say $c_1:\, \ol{G} \to \ol{G} ~;~ y \mapsto t_1 y (t_1)\inv$. (Note $c_1$ may not have order two.)
This $C_2$-action on $K_n(R[\ol{G}])$ is non-trivial, as follows.

\begin{prop}\label{prop:induction}
Let $n \leq 1$ be an integer.
The induced map $\theta_!$ is such that there is a commutative diagram
\[\begin{diagram}
\node{\wt{\Nil}_{n-1}(R[F],\alpha)\oplus \wt{\Nil}_{n-1}(R[F],\alpha\inv)}
\arrow{s,b}{\begin{pmatrix} i_* & \tau_A\inv i'_* \beta_u^+ \end{pmatrix}}
\arrow{e,t}{\di{\sigma_B}}
\node{K_n(R[\ol{G}])}
\arrow{s,b}{\di{\theta_!}}\\
\node{\wt{\Nil}_{n-1}(R[F];\fB_1,\fB_2)}
\arrow{e,t}{\di{\sigma_A}}
\node{K_n(R[G])}
\end{diagram}\]
Furthermore, there is a $C_2$-action on the upper left hand corner which interchanges the two $\Nil$-summands, and all maps are  $C_2$-equivariant. Here, the action of $C_2 = G/\ol{G}$ on the upper right is given by $(c_1)_!$, and the $C_2$-action on each lower corner is trivial.
\end{prop}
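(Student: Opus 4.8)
The plan is to establish the commutative square by assembling the pieces already proved, then to exhibit the $C_2$-equivariance by tracking the conjugation action through each ingredient. First I would observe that $K_n(R[\ol{G}])$ carries the twisted Bass decomposition \eqref{eqn:Ktwist}, so that $\sigma_B$ on the upper row is the map $\begin{pmatrix}\psi^+_!\sigma^+_B & \psi^-_!\sigma^-_B\end{pmatrix}$ restricted to the two reduced $\wt{\Nil}$-summands. On the first summand $\wt{\Nil}_{n-1}(R[F],\alpha)$, commutativity of the square is precisely \fullref{Statement_meat} together with \fullref{Statement_RingMaps} (giving $\phi_! = \theta_! \circ \psi^+_!$): indeed $\theta_! \circ \psi^+_! \circ \sigma^+_B = \phi_! \circ \sigma^+_B = \sigma_A \circ i_*$. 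On the second summand $\wt{\Nil}_{n-1}(R[F],\alpha\inv)$, the key point is that $\alpha'$ and $\alpha\inv$ are related by the inner automorphism $u$ (emphasized in \fullref{Statement_OuterAutomorphisms}), so I would run the same argument through the primed side: \fullref{scalingG1} gives $\theta_! = \theta'_! \circ (\beta_u)_!$ and $\beta_u \circ \psi^- = \psi'^+ \circ \beta^+_u$, while \fullref{Statement_meat} applied to the primed data gives $\sigma'_A \circ i'_* = \phi'_! \circ \sigma'^+_B$, and \fullref{Statement_Transposition} gives $\sigma_A \circ \tau\inv_A = \sigma'_A$. Chasing these identities shows $\theta_! \circ \psi^-_! \circ \sigma^-_B = \sigma_A \circ \tau\inv_A \circ i'_* \circ \beta^+_u$, which is exactly the second column of the matrix $\begin{pmatrix} i_* & \tau_A\inv i'_* \beta_u^+ \end{pmatrix}$. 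This is essentially the content of \fullref{lem:induction}, repackaged, so the square commutes.

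Next I would address the $C_2$-structure. On the right-hand side, the action of $C_2 = G/\ol{G}$ on $K_n(R[G])$ is induced by conjugation by $t_1 \in G$; since $t_1 \in G$ this is an inner automorphism of $R[G]$, hence induces the identity on $K_n(R[G])$ --- this is the triviality claim for the lower right corner, and it is why the square being $C_2$-equivariant forces a nontrivial action upstairs. On $K_n(R[\ol{G}])$, conjugation by $t_1$ is the outer automorphism $c_1$; since $t_1 F t_1\inv = F$ and $t_1 t t_1\inv = t_1(t_1t_2)t_1\inv = t_2 t_1 = t'$, the automorphism $c_1$ carries the subring $R[F]_\alpha[t,t\inv]$ to $R[F]_{\alpha'}[t',t'{}\inv]$, i.e.\ it realizes (up to the scaling identification $\beta_u$) the swap between the $R_\alpha[t,t\inv]$-picture and the $R_{\alpha'}[t',t'{}\inv]$-picture. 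Under the twisted Bass decomposition this swap exchanges the $\wt{\Nil}_{n-1}(R[F],\alpha)$-summand with the $\wt{\Nil}_{n-1}(R[F],\alpha\inv)$-summand: conjugation by $t_1$ sends $t \mapsto t'$ and hence, modulo the inner automorphism $u$, sends the $\alpha$-twisted polynomial part to the $\alpha'{}\inv$-twisted, which by \fullref{Statement_OuterAutomorphisms} is the $\alpha\inv$-picture after scaling. So I would \emph{define} the $C_2$-action on $\wt{\Nil}_{n-1}(R[F],\alpha)\oplus\wt{\Nil}_{n-1}(R[F],\alpha\inv)$ to be the off-diagonal involution built from $(\beta^-_u)_* \circ \tau_B$ and its inverse (the natural maps appearing in \fullref{scalingG3} and \fullref{Statement_Transposition}), and then check that $\sigma_B$ intertwines it with $(c_1)_!$.

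Then I would verify $C_2$-equivariance of the left-hand vertical map $\begin{pmatrix} i_* & \tau_A\inv i'_* \beta_u^+ \end{pmatrix}$ and of $\sigma_A$. For $\sigma_A$: the source $\wt{\Nil}_{n-1}(R[F];\fB_1,\fB_2)$ should get the trivial $C_2$-action, and the claim is then that $\sigma_A$ lands in a $C_2$-fixed way into $K_n(R[G])$ --- but $K_n(R[G])$ has trivial action, so there is nothing to check beyond well-definedness. For the left vertical: one must check that precomposing the involution upstairs with $\begin{pmatrix} i_* & \tau_A\inv i'_* \beta_u^+ \end{pmatrix}$ gives the same map as $\begin{pmatrix} i_* & \tau_A\inv i'_* \beta_u^+ \end{pmatrix}$ itself (since the target action is trivial). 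Concretely this reduces to the identity $i_* \circ (\beta^-_u)_*\inv \circ \tau'_B = \tau\inv_A \circ i'_* \circ \beta^+_u$ (and its companion), which in turn unwinds to \fullref{Statement_Transposition}'s relation $\tau_A \circ i_* = i'_* \circ \tau_B$ together with the scaling compatibilities of \fullref{scalingG2} and \fullref{scalingG3}. I expect the main obstacle to be purely bookkeeping: pinning down the \emph{exact} off-diagonal involution upstairs (there is a genuine choice of which scaling isomorphism $\beta^\pm_u$ to insert, and getting the $t$ versus $t\inv$, $u$ versus $u\inv$ conventions consistent with \fullref{Statement_OuterAutomorphisms} and \fullref{scalingG1}) so that all three squares --- the defining one for the $C_2$-action, the commutativity square, and the equivariance of the verticals --- hold on the nose rather than merely up to a further inner automorphism. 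Once the conventions are fixed, each individual commutativity is one of the statements \fullref{scalingG1}--\fullref{Statement_meat}, and the proof is a diagram chase.
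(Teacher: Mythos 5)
Your outline is right in broad strokes: the commutative square assembles from \fullref{lem:induction} (equivalently, from \fullref{Statement_meat}, \fullref{Statement_RingMaps}, \fullref{scalingG1}, \fullref{Statement_Transposition}, \fullref{Statement_SigmaA}), the $C_2$-action on $K_n(R[G])$ is trivial since conjugation by $t_1 \in G$ is inner, and $(c_1)_!$ on $K_n(R[\ol{G}])$ should interchange the two $\wt\Nil$-summands.

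There is, however, a genuine gap in the equivariance step, and it is not just bookkeeping. You assert that once the involution upstairs is pinned down, ``each individual commutativity is one of the statements \fullref{scalingG1}--\fullref{Statement_meat}.'' That is false for \emph{any} choice of involution. The paper defines the off-diagonal map as $\eps := (\alpha_1\inv)_! \circ \beta_u^+ : \NIL(R[F],\alpha\inv) \to \NIL(R[F],\alpha)$, and then two verifications are required, neither of which appears in the earlier Statements:
\begin{gather*}
(c_1)_! \circ \psi^-\sigma_B^- = \psi^+\sigma_B^+ \circ \eps~, \\
\tau_{A*}\inv i'_* \beta_u^+ = i_* \circ \eps~.
\end{gather*}
The first reduces (via $(c_1)_! \circ \psi^- = \psi^+ \circ (c_1)^+_!$) to the identity $(c_1)^+_!\circ\sigma_B^- = \sigma_B^+\circ(\alpha_1\inv)_!\beta_u^+$, which the paper checks by an explicit $n=1$ formula; the second requires exhibiting an exact natural transformation $T : \tau_A\inv\circ i' \to i\circ(\alpha_1\inv)_!$ of functors $\NIL(R[F],\alpha') \to \NIL(R[F];t_1R[F],t_2R[F])$, given on objects by $T_{(P,\rho)} = (1,\rho): (t_1P,P,1,\rho) \to (t_1P,t'P,t_1\rho,1)$, together with the identification $R[F]\otimes_{c_1}P \cong t_1P$. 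If you instead define the involution via the transposition $\tau_B$ and scalings as you propose, then the left-vertical equivariance becomes tautological (it is exactly $\tau_A i_* = i'_*\tau_B$), but the $\sigma_B$-equivariance still needs a fresh computation equivalent to the first identity above --- the burden just shifts, it does not vanish. Your proposal also contains a composition error: $(\beta_u^-)_* : \NIL(R[F],\alpha) \to \NIL(R[F],\alpha'{}\inv)$ and $\tau_B : \wt\Nil_*(R[F],\alpha)\to\wt\Nil_*(R[F],\alpha')$ do not compose in either order, so ``$(\beta_u^-)_*\circ\tau_B$'' is not a map between the needed groups; the compatible choice is $(\beta_u^+)_*\inv\circ\tau_B$, or equivalently the paper's $\eps\inv$. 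To close the gap you would need to write down the involution precisely, and then actually carry out the one nontrivial check that $\sigma_B$ intertwines $(c_1)_!$ with it --- which is new content, not a consequence of what was proved earlier.
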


\begin{proof}
First, we check commutativity of the square on each $\Nil$-summand:
\begin{itemize}
\item
\fullref{lem:induction} gives $\sigma_A \circ i_* = \phi_! \circ \sigma_B^+ = \theta_! \circ
\psi_!^+ \circ \sigma_B^+ = \theta_! \circ
\sigma_B|\wt{\Nil}_{n-1}(R[F],\alpha)$

\item
\fullref{scalingG3} and \fullref{scalingG1} give
$\sigma_A \circ \tau_A\inv \circ i_*' \circ \beta_u^+ =
\sigma_A' \circ i_*' \circ \beta_u^+ = \phi_!' \circ \sigma_B'^+
\circ \beta_u^+ = \theta_! \circ (\beta_u)_!\inv \circ
\psi_!'^+ \circ (\beta_u^+)_! \circ \sigma_B^- = \theta_! \circ
\psi_!^- \circ \sigma_B^- = \theta_! \circ \sigma_B |
\wt{\Nil}_{n-1}(R[F],\alpha\inv)$.
\end{itemize}

Next, define the involution $\SmMatrix{0 & \eps_*\\ \eps_*\inv & 0}$ on $\wt{\Nil}_{n-1}(R[F],\alpha) \oplus \wt{\Nil}_{n-1}(R[F],\alpha\inv)$ by
\[
\eps := (\alpha_1\inv)_! \circ \beta_u^+ ~:~ \NIL(R[F],\alpha\inv) \longrightarrow \NIL(R[F],\alpha).
\]
Here, the automorphism $\alpha_1\inv: F \to F$ was defined in \ref{Statement_OuterAutomorphisms} by $x \mapsto t_1 x (t_1)\inv$ and is the restriction of $c_1$.
It remains to show $\sigma_B$ and $\SmMatrix{i_* & \tau_{A*}\inv i'_* \beta_u^+}$ are $C_2$-equivariant, that is:
\begin{eqnarray}
\label{eqn:C2right} (c_1)_! \circ \psi^-\sigma_B^- &=& \psi^+\sigma_B^+ \circ \eps\\
\label{eqn:C2down} \tau_{A*}\inv i'_* \beta_u^+ &=& i_* \circ \eps.
\end{eqnarray}

Observe that the induced ring automorphism $(c_1)_!: R[F]_\alpha[t,t\inv] \to R[F]_\alpha[t,t\inv]$ restricts to a ring isomorphism
\[
(c_1)^+_! ~:~ R[F]_{\alpha\inv}[t\inv] \longra R[F]_\alpha[t] ~;~ x \longmapsto \alpha_1\inv(x) ~;~ t\inv \longmapsto t \, \alpha_1\inv(u).
\]
Then $(c_1)_! \circ \psi^- = \psi^+ \circ (c_1)^+_!$. So (\ref{eqn:C2right}) follows from the commutative square
\[
(c_1)^+_! \circ \sigma_B^- ~=~ \sigma_B^+ \circ (\alpha_1\inv)_! \beta_u^+ ~:~ \wt{\Nil}_{n-1}(R[F],\alpha\inv) \longra K_n(R[F]_\alpha[t]),
\]
which can be verified by formulas for $n=1$ and extends to $n < 1$ by variation of $R$.

Observe that (\ref{eqn:C2down}) follows from the existence of an exact natural transformation
\[
T ~:~ \tau_A\inv \circ i' \to i \circ (\alpha_1\inv)_! ~:~ \NIL(R[F],\alpha') \longra \NIL(R[F]; t_1 R[F], t_2 R[F])
\]
defined on objects $(P, \rho: P \to t'P = t_2 t_1 P)$ by the rule
\[
T_{(P,\rho)} ~:=~ (1,\rho) ~:~ (t_1 P, P, 1, \rho) \longra (t_1 P, t'P, t_1\rho, 1);
\]
a key observation from \ref{Statement_OuterAutomorphisms} is the isomorphism $R[F] \xo_{c_1} P \to t_1 P ~;~ x \xo p \mapsto \alpha_1(x) p$.
\end{proof}

\subsubsection{Transfer}
We analyze the effect of transfer maps on
$\wt{\Nil}$-summands.

\begin{stt}
Given an $R[G]$-module $M$, let $M^!$ be the abelian group $M$ with
$R[\ol{G}]$-action the restriction of the $R[G]$-action.  The transfer functor
of exact categories
$$\theta^!~:~\PROJ(R[G]) \longra \PROJ(R[\ol{G}])~;~M \longmapsto M^!$$
induces the transfer maps in algebraic $K$-theory
$$\theta^!~:~K_*(R[G]) \longra K_*(R[\ol{G}])~.$$
The exact functors of \fullref{maink} combine to give an exact
functor
$$\begin{array}{l}
\begin{pmatrix}j\\ j'\end{pmatrix} ~:~\NIL(R[F];\fB_1,\fB_2) \longra \NIL(R[F],\alpha)
\times \NIL(R[F],\alpha')~;\\[1ex]
\hskip150pt
[P_1,P_2,\rho_1,\rho_2]  \longmapsto \big([P_1,\rho_2\circ \rho_1],[P_2,\rho_1\circ \rho_2]\big)
\end{array}$$
inducing a map between reduced $\Nil$-groups
\[
\begin{pmatrix}j_* \\ j'_*\end{pmatrix}~:~\wt{\Nil}_*(R[F];\fB_1,\fB_2) \longra \wt{\Nil}_*(R[F],\alpha)
\oplus \wt{\Nil}_*(R[F],\alpha')~.
\]
\end{stt}

\begin{prop}\label{transfer}
Let $n \leqslant 1$ be an integer. The transfer map $\theta^!$ restricts
to the isomorphism $j_*$ in a commutative diagram
\[
\xymatrix@C+10pt@R+10pt{
\wt{\Nil}_{n-1}(R[F];\fB_1,\fB_2) \ar[d]_-{\begin{pmatrix}
j_* \\ (\beta_u^+)\inv j'_* \end{pmatrix}}
~~\ar@{>->}[rr]^-{\di{\sigma_A}} & & K_n(R[G]) \ar[d]^-{\di{\theta^!}} \\
\wt{\Nil}_{n-1}(R[F],\alpha)\oplus
\wt{\Nil}_{n-1}(R[F],\alpha\inv)
~~\ar@{>->}[rr]^-{\begin{pmatrix}\psi^+\sigma_B^+ & \beta_u \psi^- \sigma_B^-\end{pmatrix}} & & K_n(R[\ol{G}])~.}
\]
\end{prop}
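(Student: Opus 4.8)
The plan is to compute the transfer $\theta^!$ explicitly on the generators $\sigma_A[P_1,P_2,\rho_1,\rho_2]$ of the $\wt{\Nil}$-summand, and to match the result with the two-component formula on the bottom row. By \fullref{Statement_SigmaA}, $\sigma_A[P_1,P_2,\rho_1,\rho_2]$ is represented (for $n=1$) by the automorphism $1-t\rho_2\rho_1$ of $P_1[G]=R[G]\otimes_{R[F]}P_1$, and equivalently by $1-t'\rho_1\rho_2$ of $P_2[G]$. The key ring-theoretic input is that $R[G]$ is free as a right $R[\ol G]$-module on the two cosets $\{1,t_1\}$ (equivalently $\{1,t_2\}$), so that as an $R[\ol G]$-bimodule $R[G]=R[\ol G]\oplus t_1R[\ol G]$, and therefore for any $R[F]$-module $P$ there is a natural $R[\ol G]$-isomorphism
\[
(R[G]\otimes_{R[F]}P)^!~\cong~\big(R[\ol G]\otimes_{R[F]}P\big)\;\oplus\;\big(R[\ol G]\otimes_{R[F]}t_1P\big).
\]
First I would set up this decomposition carefully, using the identification $R[\ol G]=R[F]_\alpha[t,t^{-1}]$ from \fullref{Statement_RingMaps} and the coset description of $G/\ol G$, and express the transferred automorphism $\theta^!(1-t\rho_2\rho_1)$ as a block $2\times 2$ matrix over $R[\ol G]$ with respect to this splitting.

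Second, I would carry out the block computation. Writing out how left multiplication by $t=t_1t_2\in\ol G$ acts on the two summands $R[\ol G]\otimes_{R[F]}P_1$ and $R[\ol G]\otimes_{R[F]}t_1P_1$ — here one uses $t_1\cdot t_1 = 1$, so $t$ sends the second summand into the first with a twist, and the first into the second — one finds that $\theta^!(1-t\rho_2\rho_1)$ becomes block upper- or lower-triangular after the evident elementary operations, with diagonal blocks $1-t\rho_2\rho_1$ on $P_1[\ol G]$ and something of the form $1-(\text{twisted})\,\rho_1\rho_2$ on $(t_1P_1)[\ol G]$. Interpreting the first diagonal block via $\sigma_B^+$ and the second via $\sigma_B^-$ after the scaling isomorphism $\beta_u$ (which is exactly the bookkeeping device of \fullref{scalingG1}--\fullref{scalingG3} designed to convert the product $t'=t_2t_1$ into $t^{-1}$-type data), one recognizes the bottom horizontal map $\begin{pmatrix}\psi^+\sigma_B^+ & \beta_u\psi^-\sigma_B^-\end{pmatrix}$ applied to $\big(j_*[P_1,P_2,\rho_1,\rho_2],\,(\beta_u^+)^{-1}j'_*[P_1,P_2,\rho_1,\rho_2]\big)$. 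The identification of the second block uses $j'_*[P_1,P_2,\rho_1,\rho_2]=[P_2,\rho_1\rho_2]$ together with the chain-equivalence $i'_*j'_*=\mathrm{id}$ of \fullref{maink}, so that $t_1P_1$ and $P_2$ represent the same class after passing to reduced $\Nil$; this is the analogue, on the transfer side, of the natural transformation $T$ used at the end of the proof of \fullref{prop:induction}.

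Third, since everything is stated for all $n\leqslant 1$, I would promote the $n=1$ computation to lower degrees by the suspension formalism of \fullref{Sec_Lower}: the transfer $\theta^!$, the maps $\sigma_A,\sigma_B^\pm$, and the scaling isomorphisms all commute with the suspension isomorphisms $K_n(R[-])\cong K_1(\Sigma^{1-n}R[-])$ and with replacement of $R$ by $\Sigma^{1-n}R$, so the commutative square for general $n\leqslant 1$ follows formally from the case $n=1$ applied to the ring $\Sigma^{1-n}R$.

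The main obstacle will be the second step: getting the block matrix for $\theta^!(1-t\rho_2\rho_1)$ exactly right, in particular keeping precise track of how the $F$-conjugations $\alpha_1,\alpha_2$ and the unit $u=(t')^{-1}t^{-1}$ enter when one rewrites left multiplication by $t_1t_2$ in terms of the $R[\ol G]$-module structure on $t_1P_1$, and then seeing that the off-diagonal block is genuinely killed by elementary operations (rather than merely up to a unit). This is precisely where the asymmetry between $\alpha$ and $\alpha^{-1}$ — emphasized in \fullref{Statement_OuterAutomorphisms}, where $\alpha'$ is inner-conjugate to $\alpha^{-1}$ and \emph{not} to $\alpha$ — is forced on us, and matching the resulting twist with the map $\beta_u$ in the bottom row is the heart of the argument. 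Once the $n=1$ square is verified on generators, commutativity on all of $\wt{\Nil}_{n-1}(R[F];\fB_1,\fB_2)$ follows since $\sigma_A$ is a split monomorphism onto the $\wt{\Nil}$-summand and the generators span, and the extension to $n\leqslant 1$ is routine.
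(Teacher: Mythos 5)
Your high-level plan matches the paper's: reduce to $n=1$ by the suspension formalism, compute the transfer $\theta^!$ explicitly on a $K_1$-representative of $\sigma_A[x]$, diagonalize by elementary operations, recognize the two blocks as images under $\sigma_B^\pm$, and convert the second block to the $\alpha\inv$-form via the scaling isomorphism $\beta_u$. However, there is a genuine computational error in the middle step that the choice of representative is masking.

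You propose to transfer the $1\times 1$ representative $1-\widehat{\rho_2\rho_1}$ on $P_1[G]$ and you expect the resulting $2\times 2$ matrix over $R[\ol G]$ to have off-diagonal blocks that elementary operations then kill. This is not what happens. Since $t=t_1t_2\in\ol G$ and $\rho_2\rho_1:P_1\to\fB_1\fB_2P_1\cong tR[F]\otimes_{R[F]}P_1\subset P_1[\ol G]$ is $R[F]$-linear, the extension $\widehat{\rho_2\rho_1}$ on $P_1[G]=P_1[\ol G]\oplus(t_1P_1)[\ol G]$ \emph{preserves} the two $\ol G$-coset summands; one checks that $a\otimes p_1\mapsto a\cdot\rho_2\rho_1(p_1)$ sends $R[\ol G]\otimes P_1$ to itself and $R[\ol G]t_1\otimes P_1$ to itself. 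The transferred matrix is therefore already block-diagonal, and there are no off-diagonal entries for elementary operations to remove. The heuristic ``$t$ sends the second summand into the first'' (and the premise ``$t_1\cdot t_1=1$'') is false here: $t_1^2$ lies in $F$ but is not $1$ in $G$, and more importantly left multiplication by $t\in\ol G$ does not swap cosets of $\ol G$. The coset-swapping is exactly the behavior of the off-diagonal entries $t_1\rho_1$ and $t_2\rho_2$ (which involve elements \emph{not} in $\ol G$), and your $1\times 1$ representative has already absorbed these into a single $\ol G$-element before you transfer.

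With the $1\times 1$ form, the second diagonal block lives on $(t_1P_1)[\ol G]$ and is the $c_1$-conjugate of the first block; to interpret it as $\psi'^+\sigma_B'^+ j'_*[x]$ on $P_2[\ol G]$ you would indeed need the change-of-coordinates device (your $T$-style natural transformation, or the fact that $i'_*j'_*\simeq\mathrm{id}$), which only holds after passage to $\wt{\Nil}$ and makes the comparison with the bottom row of the diagram more delicate. The paper's proof sidesteps all of this by using the $2\times 2$ Waldhausen representative $f=\left(\begin{smallmatrix}1&t_2\rho_2\\t_1\rho_1&1\end{smallmatrix}\right)$ on $P_1[G]\oplus P_2[G]$. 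Its transfer is a $4\times 4$ matrix which, after the reordering $P_1[\ol G]\oplus t_1P_2[\ol G]\oplus P_2[\ol G]\oplus t_1P_1[\ol G]$, decomposes into two $2\times 2$ blocks \emph{of the same shape as $f$}; here the off-diagonal entries genuinely mix cosets (because $t_1,t_2\notin\ol G$), so elementary operations apply literally and produce $1-\widehat{\rho_2\rho_1}$ on $P_1[\ol G]$ and $1-\widehat{\rho_1\rho_2}$ on $P_2[\ol G]$ directly — no auxiliary identification of twisted modules is needed before invoking the scaling relations of Statements \ref{scalingG1}--\ref{scalingG3}. You should redo the block computation with the $2\times 2$ representative and track the coset decomposition of $P_1[G]\oplus P_2[G]$, rather than of $P_1[G]$ alone.
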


\begin{proof}
Using the suspension isomorphisms of \fullref{Sec_Lower}, we may
assume $n=1$. Let $(P_1,P_2,\rho_1,\rho_2)$ be an object in
$\NIL(R[F];\fB_1,\fB_2)$. Define an $R[G]$-module automorphism
\[
f~:=~\begin{pmatrix} 1 & t_2 \rho_2 \\ t_1 \rho_1 & 1
\end{pmatrix}~: P_1[G] \oplus P_2[G] \longra P_1[G] \oplus P_2[G] ~.
\]
By \fullref{Waldhausen}, we have $[f] =
\sigma_A[P_1,P_2,\rho_1,\rho_2] \in K_1(R[G])$. Note the transfer is
\[
\theta^!(f)~=~\begin{pmatrix} 1 & t_2 \rho_2 & 0  & 0 \\
t_1 \rho_1 & 1 & 0 & 0 \\
0 & 0 & 1 & t_1 \rho_1 \\
0 & 0 & t_2 \rho_2 & 1 \end{pmatrix}
\]
as an $R[\ol{G}]$-module automorphism of $P_1[\ol{G}]
\oplus t_1P_2[\ol{G}] \oplus P_2[\ol{G}] \oplus
t_1P_1[\ol{G}]$. Furthermore, elementary row and column
operations produce a diagonal representation:
\[
\begin{pmatrix} 1 & -t_2 \rho_2 & 0  & 0 \\
0 & 1 & 0 & 0 \\
0 & 0 & 1 & -t_1 \rho_1 \\
0 & 0 & 0 & 1 \end{pmatrix} \theta^!(f) \begin{pmatrix} 1 & 0 & 0  & 0 \\
-t_1 \rho_1 & 1 & 0 & 0 \\
0 & 0 & 1 & 0 \\
0 & 0 & -t_2 \rho_2 & 1 \end{pmatrix}~=~
\begin{pmatrix} 1-t'\rho_2\rho_1 & 0 & 0  & 0 \\
0 & 1 & 0 & 0 \\
0 & 0 & 1-t\rho_1\rho_2 & 0 \\
0 & 0 & 0 & 1 \end{pmatrix}~.
\]
So $\theta^![f] = [1-t'\rho_2\rho_1] + [1 - t\rho_1\rho_2]$. Thus we
obtain a commutative diagram
$$\begin{array}{l}
\xymatrix@C+10pt@R+10pt{
\wt{\Nil}_{0}(R[F];\fB_1,\fB_2)
\ar[d]_-{\begin{pmatrix} j_* \\ j'_* \end{pmatrix}}
~~\ar@{>->}[r]^-{\di{\sigma_A}} & K_1(R[G]) \ar[d]^-{\di{\theta^!}} \\
\wt{\Nil}_{0}(R[F],\alpha)\oplus
\wt{\Nil}_{0}(R[F],\alpha')
~~\ar@{>->}[r]^-{\di{\begin{pmatrix}\psi^+ \sigma^+_B & \psi'^+ \sigma_B'^+\end{pmatrix}}} & K_1(R[\ol{G}])}
\end{array}$$
Finally, by \fullref{scalingG3} and \fullref{scalingG1},
note
\[
\psi'^+ \circ \sigma_B'^+ \circ \beta_u^+ = \psi'^+ \circ \beta_u^+ \circ \sigma_B^-
 = \beta_u \circ \psi^- \circ \sigma_B^- ~.
\proved\]
\end{proof}

\subsection{Waldhausen Nil}\label{Second}

Examples of bimodules originate from group rings of
amalgamated product of groups.

\begin{dfn}\label{Defn_almostnormal}
A subgroup $H$ of a group $G$ is \textbf{almost-normal} if $|H : H
\cap x H x\inv| < \infty$ for every $x \in G$. In other words, $H$ is commensurate with all its conjugates. Equivalently, $H$ is
an almost-normal subgroup of $G$ if every $(H,H)$-double coset $HxH$
is both a union of finitely many left cosets $gH$ and a
union of finitely many right cosets $Hg$.
\end{dfn}

\begin{rem} Almost-normal subgroups arise in the Shimura theory of
automorphic functions, with $(G,H)$ called a Hecke pair.  Here are two
sufficient conditions for a subgroup $H \subset G$ to be almost-normal:
if $H$ is a finite-index subgroup of $G$, or if $H$ is a normal
subgroup of $G$.  Examples of almost-normal subgroups are
given in \cite[page~9]{Krieg}.
\end{rem}

Here is our reduction for a certain class of group rings,
specializing the General Algebraic Semi-splitting of \fullref{maink}.

\begin{cor}\label{Cor_GroupNil}
Let $R$ be a ring. Let $G = G_1 *_F G_2$ be an injective amalgamated
product of groups over a subgroup $F$ of $G_1$ and $G_2$.
Suppose $F$ is an almost-normal subgroup of $G_2$. Then, for all $n
\in \Z$, there is an isomorphism of abelian groups:
\[
j_*:\, \wt {\Nil}_n(R[F];R[G_1-F], R[G_2-F]) \longra \wt \Nil_{n}(R[F]; R[G_1-F]
\otimes_{R[F]} R[G_2-F]).
\]
\end{cor}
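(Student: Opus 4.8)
The strategy is to deduce this directly from the Nil--Nil Theorem (\fullref{maink}) by verifying its filtered-colimit hypothesis for the right-hand bimodule, where the almost-normality of $F$ in $G_2$ is exactly what is needed. Put $S = R[F]$, $\fB_1 = R[G_1 - F]$, $\fB_2 = R[G_2 - F]$, regarded as $S$-bimodules. Since \fullref{maink} imposes no condition on $\fB_1$, the only thing to check is that $\fB_2$ is a filtered colimit of $S$-bimodules each of which is a finitely generated projective left $S$-module; then \fullref{maink} supplies the isomorphism $\wt\Nil_n(R[F];\fB_1,\fB_2)\cong\wt\Nil_n(R[F];\fB_1\otimes_{R[F]}\fB_2)$ for all $n\in\Z$, and, by its construction there, this isomorphism is the map $j_*$ in the statement.

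\textbf{Decomposition along double cosets.} First I would decompose $\fB_2$ over the $(F,F)$-double cosets in $G_2 - F$. As an $R$-module, $R[G_2-F]$ has $R$-basis $G_2 - F$, and this set is partitioned into the double cosets $FxF$. Writing $D := F\backslash(G_2-F)/F$ for the set of these double cosets, we obtain $\fB_2 = \bigoplus_{d\in D} R[d]$. Each summand $R[FxF]$ is an $S$-sub-bimodule of $\fB_2$, because $FxF$ is stable under left and right multiplication by $F$. Grouping into double cosets (rather than single cosets) is forced precisely because single cosets are not bi-invariant.

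\textbf{Each summand is finitely generated free over $S$.} The key point is that $R[FxF]$ is a finitely generated free \emph{left} $S$-module. Indeed, $F$ acts freely on $FxF$ by left multiplication, and its orbits are exactly the right cosets $Fy\subseteq FxF$; hence $R[FxF] = \bigoplus_{Fy\subseteq FxF} R[Fy]$ as left $S$-modules, with each $R[Fy]\cong R[F]=S$ free of rank one via $f\mapsto fy$. Because $F$ is almost-normal in $G_2$, the double coset $FxF$ is a union of only finitely many right cosets $Fy$ (the right-coset half of the characterization in \fullref{Defn_almostnormal}), so this sum is finite and $R[FxF]$ is free of finite rank over $S$.

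\textbf{Passing to the colimit and conclusion.} Let $I$ be the directed poset of finite subsets of $D$, ordered by inclusion, and for $T\in I$ set $\fB_2^T := \bigoplus_{d\in T} R[d]\subseteq \fB_2$. Each $\fB_2^T$ is an $S$-sub-bimodule that is finitely generated free as a left $S$-module, and $\fB_2 = \colim_{T\in I}\fB_2^T$ is a filtered colimit. Thus the hypotheses of \fullref{maink} hold with $(R;\fB_1,\fB_2)$ replaced by $(R[F];R[G_1-F],R[G_2-F])$, and the corollary follows. I do not see a genuine obstacle here: the entire content beyond \fullref{maink} is the translation ``almost-normal $\Rightarrow$ filtered colimit of finitely generated projective left modules,'' and the only mild subtlety is the interplay noted above between needing a decomposition by \emph{sub-bimodules} and retaining finite generation as a left module, which is resolved exactly by the almost-normality hypothesis.
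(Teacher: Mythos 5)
Your proposal is correct and follows essentially the same route as the paper: decompose $R[G_2-F]$ into the non-trivial $(F,F)$-double cosets, observe that almost-normality makes each $R[FxF]$ a finitely generated free left $R[F]$-module (via the right-coset decomposition), take the filtered colimit over finite subsets of double cosets, and apply the Nil--Nil Theorem (\ref{maink}). You supply slightly more detail than the paper about why each $R[FxF]$ is free (the paper merely asserts it), but the decomposition, the filtered poset, and the appeal to \ref{maink} are identical.
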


\begin{proof}
Consider the set $J := (F \backslash G_2 \slash F) - F$ of
non-trivial double cosets. Let $\mathcal{I}$ be the poset of all
finite subsets of $J$, partially ordered by inclusion. Note, as
$R[F]$-bimodules:
\[
R[G_2-F] = \colim_{I \in \mathcal{I}} R[I] \quad\text{where}\quad
R[I] := \bigoplus_{FgF \in I} R[FgF].
\]
Since $F$ is an almost-normal subgroup of $G_2$, each
$R[F]$-bimodule $R[I]$ is a finitely generated  free (hence
projective) left $R[F]$-module. Observe that $\mathcal{I}$ is a filtered poset: if
$I, I' \in \mathcal{I}$ then $I \cup I' \in \mathcal{I}$. Therefore
we are done by \fullref{maink}.
\end{proof}

The case of $G=D_\infty = \Z_2 * \Z_2$ has a particularly simple
form.

\begin{cor}\label{dcor}
Let $R$ be a ring and $n \in \Z$.  There are natural isomorphisms:
\begin{enumerate}
\item $ \wt {\Nil}_n(R;R,R) \cong \wt
    \Nil_{n}(R) $
\item $K_n(R[D_\infty]) \cong (K_n(R[\Z_2]) \oplus K_n(R[\Z_2]))/K_n(R) ~\oplus~ \wt{\Nil}_{n-1}(R)
    $.
\end{enumerate}
\end{cor}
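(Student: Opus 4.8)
The plan is to derive \fullref{dcor} as a direct specialization of \fullref{Cor_GroupNil} and \fullref{maink}, taking $G = D_\infty = \Z_2 * \Z_2$ with $F$ the trivial group. First I would establish part (i). Here $F = \{1\}$, so $R[F] = R$, and the bimodules are $\fB_1 = R[G_1 - F] = R[\Z_2 - \{1\}]$ and $\fB_2 = R[G_2 - F] = R[\Z_2 - \{1\}]$; since each $G_k = \Z_2$ has exactly one nontrivial element, both $\fB_1$ and $\fB_2$ are free of rank one as $R$-bimodules, and in fact isomorphic to $R$ as $R$-bimodules (the nontrivial generator $t_k$ acting trivially on coefficients, so $t_k R \cong R$). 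Consequently $\fB_1 \otimes_R \fB_2 \cong R \otimes_R R = R$ as $R$-bimodules. Then \fullref{maink} (applied with the trivial colimit, $\fB_2$ itself being finitely generated free over $R$) yields the isomorphism $j_*:\wt{\Nil}_n(R; \fB_1, \fB_2) \xrightarrow{\cong} \wt{\Nil}_n(R; \fB_1\fB_2)$, and substituting the identifications $\fB_1 = \fB_2 = R$ and $\fB_1\fB_2 = R$ gives $\wt{\Nil}_n(R; R, R) \cong \wt{\Nil}_n(R; R) = \wt{\Nil}_n(R)$, which is part (i).

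For part (ii), I would invoke Waldhausen's decomposition as recalled in the Introduction and in \fullref{Waldhausen}. Since $\{1\}$ is certainly almost-normal (indeed finite-index) in $\Z_2$, all the hypotheses are met and the group ring $R[D_\infty] = R[\Z_2] *_R R[\Z_2]$ is an injective amalgamated free product with $R$-bimodules $R[G_k - F]$ free as right $R$-modules. Waldhausen's theorem then gives
\[
K_n(R[D_\infty]) ~=~ K_n(R \to R[\Z_2] \times R[\Z_2]) \oplus \wt{\Nil}_{n-1}(R; R[\Z_2 - \{1\}], R[\Z_2 - \{1\}]).
\]
The relative term $K_n(R \to R[\Z_2]\times R[\Z_2])$ unwinds, via the Mayer--Vietoris sequence of \fullref{Waldhausen}, to $(K_n(R[\Z_2]) \oplus K_n(R[\Z_2]))/K_n(R)$ — more precisely, one extracts this from the long exact sequence using that $R \to R[\Z_2]$ is split injective, so the connecting maps vanish and the sequence breaks into short exact pieces. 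Combining with part (i) to rewrite the $\wt{\Nil}$ term as $\wt{\Nil}_{n-1}(R)$ completes the proof.

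The naturality claim requires a remark: all the constructions — Waldhausen's decomposition, the splitting $R \to R[\Z_2]$, and the isomorphism $j_*$ of \fullref{maink} — are natural in the ring $R$ (functorial with respect to ring homomorphisms), so the displayed isomorphisms assemble into natural transformations of functors on rings. I do not anticipate a serious obstacle here; the only point needing a little care is being precise about what ``$K_n(R \to R[\Z_2]\times R[\Z_2])$'' denotes as a relative $K$-group and checking that the splitness of $R \hookrightarrow R[\Z_2]$ indeed collapses the relevant Mayer--Vietoris sequence to give the stated quotient — but this is standard and follows immediately from the exact sequence displayed in \fullref{Waldhausen}. The main conceptual content has already been done in \fullref{maink}; \fullref{dcor} is essentially bookkeeping.
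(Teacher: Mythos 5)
Your proof is correct and follows essentially the same approach as the paper's: part (i) via \fullref{maink} (the paper routes through \fullref{Cor_GroupNil} with $F=1$, $G_i=\Z_2$, which is just \fullref{maink} specialized; you apply \fullref{maink} directly after noting the filtered colimit is trivial — an inessential shortcut), and part (ii) via Waldhausen's Mayer--Vietoris sequence using the retraction $\Z_2 \to 1$ to split $K_n(R) \to K_n(R[\Z_2])\times K_n(R[\Z_2])$ and collapse the long exact sequence.
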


\begin{proof}
Part (i) follows from \fullref{Cor_GroupNil} with $F=1$ and
$G_i=\Z_2$. Then Part (ii) follows from Waldhausen's exact sequence
(\ref{Waldhausen}), where the group retraction $\Z_2 \to 1$
induces a splitting of the map $K_n(R) \to K_n(R[\Z_2]) \x
K_n(R[\Z_2])$.
\end{proof}

\begin{exm}\label{Exm_NonzeroWaldNil}
Consider the group $G = G_0 \times D_{\infty}$ where $G_0 = \Z_2
\times \Z_2 \times \Z$.  Since $G$ surjects onto the infinite
dihedral group, there is an amalgamated product
decomposition
\[
G ~=~ (G_0 \x \Z_2) *_{G_0} (G_0 \x \Z_2)
\]
with the corresponding index 2 subgroup
\[
\ol G ~=~ G_0 \x \Z.
\]
\fullref{dcor}(1) gives an isomorphism
\[
\wt{\Nil}_{-1}(\Z[G_0]; \Z[G_0], \Z[G_0]) ~\cong~ \wt{\Nil}_{-1}(\Z[G_0]).
\]
On the other hand, Bass showed that the latter group is an
infinitely generated abelian group of exponent a power of two
\cite[XII, 10.6]{Bass}. Hence, by Waldhausen's algebraic
$K$-theory decomposition result, $\Wh(G)$ is infinitely generated
due to $\Nil$ elements. Now construct a codimension 1, finite
CW-pair $(X,Y)$ with $\pi_1 X = G$ realizing the above amalgamated
product decomposition -- for example, let $Y \to Z$ be a map of
connected CW-complexes inducing the first factor inclusion $G_0
\to G_0 \times \Z_2$ on the fundamental group and let $X$ be the
double mapping cylinder of $Z \leftarrow Y \rightarrow Z$. Next
construct a homotopy equivalence $f :\, M \to X$ of finite
CW-complexes whose torsion $\tau(f) \in \Wh(G)$ is a non-zero
$\Nil$ element. Then $f$ is non-splittable along $Y$ by
Waldhausen~\cite{Waldhausen_1969} (see \fullref{W}). \textbf{This is
the first explicit example of a non-zero Waldhausen $\wt{\Nil}$ group
and a non-splittable homotopy equivalence in the two-sided case.}
\end{exm}

\subsection{Farrell--Jones Conjecture}\label{Third}

The Farrell--Jones Conjecture asserts the family of virtually
cyclic subgroups is a ``generating'' family for $K_n(R[G])$.  In
this section we apply our main theorem to show the Farrell--Jones Conjecture holds up to dimension one if
and only if the smaller family of finite-by-cyclic subgroups is a
generating family for $K_n(R[G])$ up to dimension one.

Let $\Or G$ be the orbit category of a group $G$; objects are
$G$-sets $G/H$ where $H$ is a subgroup of $G$ and morphisms are
$G$-maps. Davis--L\"uck~\cite{DL} defined a functor $\bK_R :\, \Or G
\to \Spectra$ with the key property $\pi_n\bK_R(G/H) = K_n(R[H])$.
The utility of such a functor is that it allows the definition of
an equivariant homology theory, indeed for a $G$-CW-complex $X$,
one defines
\[
H^G_n(X; \bK_R) ~:=~ \pi_n(\map_G(-,X)_+ \wedge_{\Or G} \bK_R(-))
\]
(see \cite[Sections 4,~7]{DL} for basic properties).  Note that
the ``coefficients'' of the homology theory are given by
$H^G_n(G/H; \bK_R) = K_n(R[H])$.

A \textbf{family} $\cF$ of subgroups of $G$ is a nonempty set of
subgroups closed under conjugation and taking subgroups.  For such
a family, $E_{\cF}G$ is the classifying space for $G$-actions with
isotropy in $\cF$.  It is characterized up to $G$-homotopy type as
a $G$-CW-complex so that $(E_{\cF}G)^H$ is contractible for
subgroups $H \in \cF$ and is empty for subgroups $H \not \in \cF$.
Four relevant families are $\fin \subset \fbc \subset \vc \subset
\all$, the families of finite subgroups, finite-by-cyclic,
virtually cyclic subgroups and all subgroups respectively.  Here
\begin{eqnarray*}
\fbc &:=& \fin \cup \{H < G \mid H \cong F \rtimes \Z \text{ with $F$  finite}\}\\
\vc &:=& \{H < G \mid \exists \text{ cyclic } C < H \text{ with finite index}\}.
\end{eqnarray*}
The Farrell--Jones Conjecture in $K$-theory for the group $G$
\cite{FJiso,DL} asserts an isomorphism:
\[
H^G_n(E_{\vc}G;\bK_R) \longra H^G_n(E_{\all}G;\bK_R) = K_n(R[G]).
\]

We now state a more general version, the Fibered Farrell--Jones
Conjecture.  Let $\varphi:\, \g \to G$ be a group homomorphism. If
$\cF$ is a family of subgroups of $G$, define the family of
subgroups
\[
\varphi^*\cF ~:=~ \{ H < \g \mid \varphi(H) \in \cF\} .
\]
The Fibered Farrell--Jones Conjecture in $K$-theory for the group
$G$ asserts, for every ring $R$ and homomorphism $\varphi: \g \to G$, that following induced map is an isomorphism:
\[
H^\g_n(E_{\varphi^*\vc(G)}\g;\bK_R) \longra H^\g_n(E_{\varphi^*\all(G)}\g;\bK_R) = K_n(R[\g]).
\]

The following theorem was proved for all $n$ in \cite{DQR} using controlled topology.  We give a proof below up to dimension one using only algebraic topology.

\begin{thm} \label{isom_conj}
Let $\varphi:\, \g \to G$ be an homomorphism of groups.  Let $R$ be any
ring. The inclusion-induced map
\[
H^\g_n(E_{\varphi^*\fbc(G)}\g;\bK_R) \longra H^\g_n(E_{\varphi^*\vc(G)}\g;\bK_R).
\]
is an isomorphism for all integers $n < 1$ and an epimorphism for $n =1$.
\end{thm}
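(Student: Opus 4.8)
The plan is to derive \fullref{isom_conj} from the Transitivity Principle together with the induction and transfer calculations of \fullref{first}(\ref{first:partII}) --- equivalently, \fullref{prop:induction} and \fullref{transfer} --- which in turn rest on the Nil--Nil \fullref{maink}. First I would apply the Transitivity Principle to the two families $\varphi^*\fbc(G) \subseteq \varphi^*\vc(G)$ of subgroups of $\g$: this reduces the statement to showing, for every subgroup $K \leqslant \g$ with $\varphi(K) \in \vc(G)$, that
\[
H^K_n\big(E_{\varphi^*\fbc(G)\cap K}K;\bK_R\big) \longra H^K_n\big(E_{\varphi^*\vc(G)\cap K}K;\bK_R\big)
\]
is an isomorphism for $n < 1$ and an epimorphism for $n = 1$. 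Since subgroups of virtually cyclic groups are virtually cyclic, $\varphi^*\vc(G)\cap K$ is the family of all subgroups of $K$, so the target is $K_n(R[K])$; and if $\varphi(K)$ is finite or of the form $F\rtimes\Z$ with $F$ finite --- i.e.\ $\varphi(K) \in \fbc$ --- then $K \in \varphi^*\fbc(G)$, the source family is likewise everything, and the map is the identity. So the only case to treat is $\varphi(K)$ virtually cyclic of infinite dihedral type, in which case the composite $\ol{p}\colon K \to \varphi(K) \to D_\infty$ is onto, $K$ is a group over $D_\infty$ with $K = K_1 *_H K_2$, $H = \ker\ol{p}$ and $\ol{K} = H\rtimes_\alpha\Z$ of index two, and a direct check from the definitions of $\fbc$ and $\vc$ shows that $\varphi^*\fbc(G)\cap K$ is the family $\cF_K$ generated by the three index-two subgroups $K_1$, $K_2$, $\ol{K}$.

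To compute $H^K_*(E_{\cF_K}K;\bK_R)$ I would use the standard fact that the join of models for two families of subgroups of $K$ is a model for the union of the families (and their product a model for the intersection). Here the family generated by $K_1,K_2$ is realised by $\mathbb{R}$ with $K$ acting through $\ol p$ and the standard $D_\infty$-action by isometries (the Bass--Serre line of the amalgam); the family generated by $\ol{K}$ is realised by $E(C_2)$ with $K$ acting through $K \to K/\ol{K} = C_2$; and their intersection, the family generated by $H$, is realised by $E(K/H) = E(D_\infty)$ with $K$ acting through $\ol p$. This produces a homotopy $K$-pushout
\[
\begin{CD}
E(D_\infty) @>>> \mathbb{R}\\
@VVV @VVV\\
E(C_2) @>>> E_{\cF_K}K
\end{CD}
\]
and hence a Mayer--Vietoris sequence
\[
\cdots \longra H^K_n(E(D_\infty);\bK_R) \longra H^K_n(\mathbb{R};\bK_R)\oplus H^K_n(E(C_2);\bK_R) \longra H^K_n(E_{\cF_K}K;\bK_R) \longra \cdots .
\]

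Now I would identify the three families of terms. By the usual Mayer--Vietoris for an amalgam, $H^K_*(\mathbb{R};\bK_R) = K_*(R[H] \to R[K_1] \times R[K_2])$, and $H^K_*(E(D_\infty);\bK_R)$ is assembled from $K_*(R[H])$ by the Mayer--Vietoris for $D_\infty = \Z_2 * \Z_2$; neither carries a $\wt{\Nil}$-summand. The remaining term $H^K_*(E(C_2);\bK_R)$ is the $C_2$-equivariant homology of $EC_2$ with coefficients in $\bK_{R[\ol K]}$ twisted by the outer action of $C_2 = G/\ol K$, which on the $\wt{\Nil}$-part $\wt{\Nil}_{*-1}(R[H],\alpha)\oplus\wt{\Nil}_{*-1}(R[H],\alpha\inv)$ of $K_*(R[\ol K])$ (see \eqref{eqn:Ktwist}) is the interchange of the two summands induced by conjugation by $t_1$, i.e.\ the automorphism $c_1$ of \fullref{prop:induction}. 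Being a free, hence cohomologically trivial, $C_2$-module, it contributes exactly its coinvariants, a single copy of $\wt{\Nil}_{*-1}(R[H],\alpha)$. Thus the $\wt{\Nil}$-content of $H^K_n(E_{\cF_K}K;\bK_R)$ is one copy of $\wt{\Nil}_{n-1}(R[H],\alpha)$, whereas by \eqref{eqn:Kamalg} the $\wt{\Nil}$-content of $K_n(R[K]) = H^K_n(\mathrm{pt};\bK_R)$ is $\wt{\Nil}_{n-1}(R[H];R[K_1-H],R[K_2-H])$; on these summands the assembly map is, under the above identifications and \fullref{Waldhausen}, the $\wt{\Nil}$-component of $\theta_!$ passed to $C_2$-coinvariants, which by \fullref{prop:induction} --- the component isomorphism together with the $C_2$-equivariance --- is an isomorphism for all $n \leqslant 1$. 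The complementary, non-$\wt{\Nil}$ summands match by naturality of the Waldhausen sequences (a comparison insensitive to $\fbc$ versus $\vc$ and valid in every degree). Chasing the Mayer--Vietoris sequence with these inputs via the five-lemma then gives the isomorphism for $n < 1$; at $n = 1$ the chase would require \fullref{first}(\ref{first:partII}) in degree $2$, which is not available here, so only an epimorphism is obtained --- consistent with the full result of \cite{DQR}.

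The step I expect to be the main obstacle is this last identification: verifying that the map induced on $\wt{\Nil}$-summands by the geometric assembly $H^K_*(E(C_2);\bK_R) \to K_*(R[K])$ really is the algebraic map built from $\theta_!$ (and, for injectivity, from the transfer $\theta^!$), and that the $C_2$-action arising from the homotopy-pushout model coincides with the action by $c_1$ analysed in \fullref{prop:induction} and \fullref{transfer}. This is precisely where one needs the full strength of \fullref{first}(\ref{first:partII}) --- not merely the abstract isomorphism $\wt{\Nil}_*(R[H],\alpha) \cong \wt{\Nil}_*(R[H];R[K_1-H],R[K_2-H])$ of \fullref{maink}, but its compatibility with induction and transfer --- and it is also why the theorem is obtained here only in the range $n \leqslant 1$, as an isomorphism for $n < 1$ and an epimorphism at $n = 1$.
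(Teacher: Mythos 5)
Your overall strategy is the same as the paper's: reduce via the Transitivity Principle to subgroups $K\leq\g$ with $\varphi(K)$ of infinite dihedral type, identify the pulled-back family $\varphi^*\fbc(G)|K$ with $\fac\,K\cup\sub\,\ol K$, and then feed the Nil--Nil isomorphism together with its induction/transfer/$C_2$-equivariance (\fullref{prop:induction}) into an equivariant homology computation.  Where you genuinely deviate is in how you execute the middle step: the paper invokes Lemmas~3.1, 4.1, 4.4, 4.6 and Remark~3.21 of \cite{DQR} as a black box to identify $H^K_n(E_{\fac\cup\sub\ol K}K,E_{\fac}K;\bK_R)$ with the $C_2$-coinvariants of the Farrell--Hsiang $\wt\Nil$-summand of $K_n(R[\ol K])$, whereas you propose to reconstruct this directly from a join/homotopy-pushout model for $E_{\fac\cup\sub\ol K}K$ and an equivariant Mayer--Vietoris sequence.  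Your observation that $\mathbb{R}\times EC_2$ (with the respective $K$-actions) is a model for $E_{\sub H}K$ and that the join is a $K$-homotopy pushout is correct, and this is a more self-contained, de-black-boxed way to obtain the same coinvariants identification.

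The genuine gap is the final comparison.  You have a Mayer--Vietoris sequence computing the source $H^K_n(E_{\cF_K}K;\bK_R)$, and you want to conclude that the assembly map into $K_n(R[K])$ is an isomorphism ``via the five-lemma,'' but no second long exact sequence is produced against which to chase; the remark that ``the complementary, non-$\wt\Nil$ summands match by naturality of the Waldhausen sequences'' is an assertion, not an argument, and the Waldhausen exact sequence is not the long exact sequence of the assembly map (the $\wt\Nil$-term is spliced in).  The clean way to close this --- and what the paper actually does --- is to work with the relative groups and the triple $(E_{\all}K,E_{\cF_K}K,E_{\fac}K)$: one shows $H^K_n(E_{\all}K,E_{\cF_K}K;\bK_R)=0$ for $n\leq 1$ by identifying $H^K_n(E_{\cF_K}K,E_{\fac}K;\bK_R)$ (your coinvariants term, via \fullref{lem:DKR_induction}) isomorphically with $H^K_n(E_{\all}K,E_{\fac}K;\bK_R)$ (the Waldhausen $\wt\Nil$-term, via \fullref{DW}(2)); the isomorphism is precisely the map $\beta\circ\alpha$ made commutative by \fullref{lem:DKR_fbc}, and this is exactly where \fullref{prop:induction} enters.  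The split injectivity furnished by \fullref{DW}(1) is also essential and does not appear in your sketch.  The range ($n<1$ iso, $n=1$ epi) then drops out of the long exact sequence of the pair as you anticipated, but you need this relative formulation to actually make the argument.  Finally, the claim that $\varphi^*\fbc(G)|K = \cF_K$ is ``a direct check'' undersells it: one needs (as the paper records) that an epimorphism with finite kernel pulls back the $\fbc$ family and that an epimorphism onto an amalgam pulls back the $\fac$ family, both of which require short arguments.
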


Hence we propose a sharpening of the Farrell--Jones Conjecture in algebraic $K$-theory.

\begin{conj}
Let $G$ be a discrete group, and let $R$ be a ring. Let $n$ be an integer.

\begin{enumerate}
\item There is an isomorphism:
\[
H_n^G(E_{\fbc}G; \bK_R) \longra H_n^G(E_{\all}G; \bK_R) = K_n(R[G]).
\]
\item For any homomorphism $\varphi:\, \g \to G$ of groups, there
    is an isomorphism:
\[
H^\g_n(E_{\varphi^*\fbc(G)}\g;\bK_R) \longra H^\g_n(E_{\all}\g;\bK_R) = K_n(R[\g]).
\]
\end{enumerate}
\end{conj}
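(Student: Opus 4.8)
The plan is to deduce \fullref{isom_conj}, via the Transitivity Principle, from the Nil--Nil Theorem (\fullref{maink}) and \fullref{first}(ii), by reducing to virtually cyclic groups of infinite dihedral type, where it becomes a comparison of the two Waldhausen decompositions \eqref{eqn:Kamalg} and \eqref{eqn:Ktwist}. First I would apply the Transitivity Principle to $\g$ and the nested families $\varphi^*\fbc(G) \subseteq \varphi^*\vc(G)$: it suffices to prove that for every $V \leqslant \g$ with $\varphi(V)$ virtually cyclic, the assembly map
\[
H^V_n\bigl(E_{\cF}V;\bK_R\bigr) \longra H^V_n(\mathrm{pt};\bK_R) = K_n(R[V])
\]
is an isomorphism for $n<1$ and an epimorphism for $n=1$, where $\cF$ denotes the restriction of $\varphi^*\fbc(G)$ to $V$. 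If $\varphi(V)$ is finite or finite-by-cyclic then $V$ itself lies in $\varphi^*\fbc(G)$, so $E_{\cF}V$ is $V$-contractible and there is nothing to prove. The substantive case is $\varphi(V)$ infinite virtually cyclic of dihedral type; then $\varphi(V)$, hence $V$, surjects onto $D_\infty$ by some $p\colon V \to D_\infty$, one checks $\cF = p^*\fbc(D_\infty)$, and one has the data $H = \ker p \subseteq \ol V = H \rtimes_\alpha \Z \subseteq V = V_1 *_H V_2$ of the Introduction --- with $H$ no longer finite but only a finite extension of $\ker(\varphi|_V)$, which is exactly why the filtered-colimit hypothesis of \fullref{maink} is needed.

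Next I would compare the three $V$-spaces $T := E_{p^*\fin(D_\infty)}V \subseteq E_{p^*\fbc(D_\infty)}V \subseteq E_{\all}V = \mathrm{pt}$, where $T$ is the Bass--Serre tree of $V = V_1*_H V_2$, a line on which $V$ acts through $D_\infty$. Pulling back along $p$ a model for $E_{\fbc}D_\infty$ of the form $E_{\fin}D_\infty \cup_{ED_\infty} E_{\mathcal C}D_\infty$, where $\mathcal C$ is the family of subgroups of the translation subgroup $\langle t\rangle$, exhibits $E_{p^*\fbc(D_\infty)}V$ as a homotopy pushout built from $T$ and $E_{\mathcal C'}V$ with $\mathcal C' = p^*\mathcal C = \{W\leqslant\ol V\}$; the point is that every subgroup of $V$ with infinite cyclic image in $D_\infty$ lies inside $\ol V$. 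The resulting Mayer--Vietoris sequence in $H^V_*(-;\bK_R)$, together with the coefficient formula $H^V_*(V/K) = K_*(R[K])$, gives two things: (a) the assembly $H^V_*(T) \to K_*(R[V])$ is Waldhausen's map of \eqref{eqn:Kamalg}, so its cofiber is $\wt\Nil_{*-1}(R[H];R[V_1-H],R[V_2-H])$; and (b) $H^V_*(E_{\mathcal C'}V)$ is the homotopy $C_2$-orbit spectrum of $K_*(R[\ol V])$ for $C_2 = V/\ol V$, computed by a spectral sequence $H_p(C_2;K_q(R[\ol V])) \Rightarrow H^V_{p+q}(E_{\mathcal C'}V)$; feeding in the twisted Bass--Heller--Swan decomposition \eqref{eqn:Ktwist} of $K_*(R[\ol V])$ and the fact (\fullref{prop:induction}) that $C_2$ interchanges the two twisted-Nil summands $\wt\Nil_{*-1}(R[H],\alpha)$ and $\wt\Nil_{*-1}(R[H],\alpha\inv)$, one identifies the cofiber of $H^V_*(T) \to H^V_*(E_{p^*\fbc(D_\infty)}V)$ with the single summand $\wt\Nil_{*-1}(R[H],\alpha)$.

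Then the octahedral axiom applied to $T \subseteq E_{p^*\fbc(D_\infty)}V \subseteq \mathrm{pt}$ produces a long exact sequence relating these two Nil groups to the cofiber $C$ of the map $H^V_*(E_{p^*\fbc(D_\infty)}V) \to K_*(R[V])$ we wish to annihilate, in which the connecting homomorphism $\wt\Nil_{*-1}(R[H],\alpha) \to \wt\Nil_{*-1}(R[H];R[V_1-H],R[V_2-H])$ is, by construction, the restriction to the $\wt\Nil$-summands of the induction and transfer maps $\theta_!,\theta^!$ for $\ol V \hookrightarrow V$. By \fullref{lem:induction} and \fullref{transfer} this connecting map coincides with the Nil--Nil isomorphism of \fullref{maink} in every degree where \fullref{first}(ii) applies; a diagram chase then shows $C$ vanishes in the appropriate range, yielding the isomorphism for $n<1$ and the epimorphism for $n=1$, and the Transitivity Principle carries this local statement back to $\g$. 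A fibered version of the reduction (replacing $E_{\all}$ by $E_{\varphi^*\all(G)}$) is handled identically.

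The hard part will be steps two and three. Getting the $C_2$-equivariant homotopy type of $E_{p^*\fbc(D_\infty)}V$ right is delicate precisely because $\ol V$ is normal of index $2$ (so the naive Lück--Weiermann bookkeeping degenerates) and because $H$ may be infinite; one really must arrange that the two twisted-Nil summands of $K_*(R[\ol V])$ collapse to one on passage to homotopy $C_2$-orbits. The true crux, however, is identifying the octahedral connecting homomorphism with the $\wt\Nil$-components of $\theta_!$ and $\theta^!$ --- not merely with \emph{some} Nil--Nil isomorphism --- and this is exactly what the $C_2$-equivariance carefully built into \fullref{prop:induction} and \fullref{transfer} is designed to supply. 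It is also the source of the degree restriction, since \fullref{first}(ii) and the lemmas it rests on are established only for $n\leqslant 1$.
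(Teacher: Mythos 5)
The statement you have been asked to prove is not a theorem; it is labeled as a \textbf{Conjecture} in the paper, and the text immediately preceding it reads ``Hence we propose a sharpening of the Farrell--Jones Conjecture in algebraic $K$-theory.'' It asserts that the assembly map from the $\fbc$-family all the way to $E_{\all}$ (i.e.\ to $K_n(R[G])$) is an isomorphism. Given \fullref{isom_conj}, this is \emph{equivalent} to the full Farrell--Jones Conjecture, which remains open for arbitrary $G$ and $R$. No argument of the kind you sketch --- Transitivity Principle, reduction to the dihedral case, Nil--Nil comparison --- can prove it, because the Transitivity Principle applied to $\fbc\subset\all$ would require the conjecture to already hold for all subgroups $H\notin\fbc$, which is exactly what one is trying to establish. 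The paper does not and cannot supply a proof.

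What your proposal actually addresses is \fullref{isom_conj}, the reduction statement $H^\g_n(E_{\varphi^*\fbc(G)}\g;\bK_R)\to H^\g_n(E_{\varphi^*\vc(G)}\g;\bK_R)$, where the target family is $\vc$ rather than $\all$. That is provable precisely because, after Transitivity, the only groups $H\in\varphi^*\vc-\varphi^*\fbc$ one must handle surject onto $D_\infty$ with finite kernel, and the relative term collapses to a comparison of the two Waldhausen Nil summands via \fullref{maink}, \fullref{prop:induction}, and \fullref{transfer}. Your outline for that theorem is in the right spirit and roughly parallel to the paper's: both reduce via Transitivity to the dihedral case and both identify the relative homology through $C_2$-coinvariants acting on the twisted Nil summands. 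Where you differ is in bookkeeping: you propose to build $E_{p^*\fbc(D_\infty)}V$ as a homotopy pushout of the Bass--Serre line $T$ against $E_{\mathcal C'}V$ and run the octahedral axiom, whereas the paper instead quotes \fullref{DW} and \fullref{lem:DKR_induction} (imported from \cite{DQR}) to produce the isomorphism $\bigl(H_n^{\ol{H}}(E_{\all}\ol{H},E_{\ol{\fac}}\ol{H};\bK_R)\bigr)_{C_2}\to H_n^H(E_{\fac\,H\cup\sub\,\ol{H}}H,E_{\fac}H;\bK_R)$ directly. Your route would still need a careful verification that the pushout you describe is a correct model for $E_{p^*\fbc(D_\infty)}V$ and that the connecting map you obtain is the one governed by $\theta_!,\theta^!$; the paper's Lemmas~\ref{lem:DKR_fbc}--\ref{lem:DKR_induction} are doing exactly that work. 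But the essential gap in what you have written is that it proves the wrong statement: it proves the reduction theorem, not the Conjecture.
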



The proof of \fullref{isom_conj} will require three
auxiliary results, some of which we quote from other sources.  The
first is a variant of Theorem A.10 of Farrell--Jones~\cite{FJiso}, whose proof is identical to the proof of Theorem A.10.

\begin{tp}\label{TransitivityPrinciple}
Let $\cF \subset \cG$ be families of subgroups of a group $\g$.
Let $\bE :\, \Or  \g \to \Spectra$ be a functor.   Let $N \in \Z \cup \{\infty\}$.   If for all $H \in \cG - \cF$, the assembly map
\[
H_n^H(E_{\cF | H}H; \bE) \longra H_n^H(E_\all H; \bE)
\]
is an isomorphism for $n < N$ and an epimorphism if $n = N$, then the map
\[
H_n^\g(E_\cF \g;\bE) \longra H_n^\g(E_\all H;\bE)
\]
is an isomorphism for $n < N$ and an epimorphism if $n = N$.
\end{tp}

Of course, we apply this principle to the families $\fbc \subset \vc$. The second auxiliary result is a well-known lemma (see \cite[Theorem~5.12]{ScottWall}), but we offer an alternative proof.

\begin{lem}\label{lem_vctypes}
Let $G$ be a virtually cyclic group.  Then either
\begin{enumerate}

\item $G$ is finite.

\item $G$ maps onto $\Z$; hence $G = F
    \rtimes_{\alpha} \Z$ with $F$ finite.

\item $G$ maps onto $D_{\infty}$; hence $G =
    G_1 *_F G_2$ with $| G_i : F| = 2$ and $F$ finite.

\end{enumerate}
\end{lem}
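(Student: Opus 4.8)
The plan is to exploit the standard fact that a virtually cyclic group $G$ has a finite normal subgroup $F$ with $G/F$ either trivial, $\Z$, or $D_\infty$, and then unwind each quotient. First I would reduce to producing such an $F$. Since $G$ is virtually cyclic, it contains a cyclic subgroup $C$ of finite index; passing to the intersection of the (finitely many) conjugates of $C$, we may assume $C$ is normal and infinite cyclic (if $C$ were finite then $G$ itself is finite and we are in case (i), so assume $C \cong \Z$). The centralizer $C_G(C)$ then has index at most $2$ in $G$ (as $\Aut(\Z)=\Z_2$), so after possibly replacing $C$ we obtain a normal infinite cyclic subgroup with $G/C$ finite. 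Now let $F$ be the preimage in $G$ of the torsion of the (virtually-$\Z$) group $G$; more cleanly, take $F$ to be the maximal finite normal subgroup, which exists because the finite subgroups of $G$ containing no power of $C$ are bounded in order. Then $Q := G/F$ is a \emph{torsion-free} virtually cyclic group, hence infinite cyclic or else has a presentation forcing it to be $D_\infty$.

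Second I would classify the torsion-free virtually cyclic group $Q$. It has an infinite cyclic subgroup $\ol{C}$ of finite index, which we may take normal; then $Q/\ol{C}$ is a finite group acting faithfully on $\ol{C}\cong\Z$ (faithfully because $Q$ is torsion-free, so no nontrivial element can act trivially on a finite-index $\Z$ without itself having finite order). Since $\Aut(\Z)=\Z_2$, either $Q/\ol{C}$ is trivial, giving $Q\cong\Z$, or $Q/\ol{C}\cong\Z_2$ acting by $-1$, and a short computation (a preimage $s$ of the generator satisfies $s\,c\,s\inv = c\inv$ and $s^2 \in \ol{C}$ with $s^2$ fixed by conjugation, hence $s^2 = 1$ after adjusting $s$ by an element of $\ol{C}$ when $s^2$ is an even power — here torsion-freeness rules out the odd case) gives $Q\cong\Z_2 *_{\{1\}} \Z_2 = D_\infty$. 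This handles the three alternatives for $Q$.

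Third I would lift each case back to $G$ through the extension $1 \to F \to G \to Q \to 1$. If $Q = \{1\}$ then $G=F$ is finite, case (i). If $Q \cong \Z$, the extension splits (free quotient), so $G = F \rtimes_\alpha \Z$, case (ii). If $Q\cong D_\infty = \Z_2 * \Z_2$, pull back the two index-$2$ subgroups: let $G_i$ be the preimage of the $i$-th copy of $\Z_2$ in $Q$, so $F \subset G_i$ with $|G_i:F|=2$, and since $D_\infty = \Z_2 *_{\{1\}} \Z_2$ the pullback is the amalgam $G = G_1 *_F G_2$, case (iii); the surjection $G \to D_\infty$ is the composite $G \to Q \cong D_\infty$.

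The main obstacle is the book-keeping in the second step — establishing that the finite quotient acts \emph{faithfully} and then extracting the precise amalgam/semidirect structure of $Q$ without circularity, since "torsion-free virtually cyclic $\Rightarrow$ $\Z$ or $D_\infty$'' is exactly the crux of the lemma. Everything else (existence of the maximal finite normal subgroup, splitting over $\Z$, pulling back subgroups of $D_\infty$) is routine group theory. An alternative, perhaps slicker, route avoids the maximal finite normal subgroup entirely: take a normal infinite cyclic $C \trianglelefteq G$ as above, set $F := \ker(G \to \Aut(C) \cong \Z_2)$ composed with nothing — rather, look at the action of $G$ on the two ends of $C$ and let $\ol{G} := $ the index-$\le 2$ subgroup fixing the ends; then $\ol{G}$ centralizes $C$ up to finite index and one shows $\ol{G} = F_0 \times \Z$-ish, i.e. $\ol{G}$ has a finite normal subgroup with quotient $\Z$; if $\ol{G}=G$ we are in case (ii), and if $[G:\ol{G}]=2$ the reflection gives the $D_\infty$ quotient and case (iii). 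I would present whichever of these is shorter; the first is more self-contained and is the one I would write up.
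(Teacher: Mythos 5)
Your route --- take the maximal finite normal subgroup $F$ and show $Q := G/F$ is $\Z$ or $D_\infty$ --- differs from the paper's, which passes to a normal infinite cyclic $C$, embeds it in a larger infinite cyclic $C'$ of index $|G/C|$, kills the extension class under $H^2(G/C;C) \to H^2(G/C;C')$ by the transfer/exponent argument, and so realizes $G$ as a finite-index subgroup of the split extension $C' \rtimes (G/C)$, which visibly surjects onto $\Z$ or $D_\infty$. Your plan is salvageable but has a genuine error at its hinge: the claim that $Q = G/F$ is \emph{torsion-free} is false. For $G = D_\infty$ the maximal finite normal subgroup is trivial, so $Q = D_\infty$, which has plenty of $2$-torsion; a torsion-free virtually cyclic group is $\Z$ or trivial, never $D_\infty$, so your sentence ``torsion-free $\ldots$ hence infinite cyclic or else $\ldots$ $D_\infty$'' is self-contradictory, and your own computation $s^2 = 1$ confirms this by producing an order-$2$ element of $Q$. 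The faithfulness claim is also wrong as stated: a generator of $\Z$ centralizes its finite-index subgroup $2\Z$ yet has infinite order.

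The property you actually get is that $Q$ has \emph{no nontrivial finite normal subgroup}: any finite normal subgroup of $Q$ pulls back to a finite normal subgroup of $G$ containing $F$, hence equals $F$ by maximality. Use this instead of torsion-freeness. The kernel $K$ of $Q \to \Aut(\ol{C}) \cong \Z_2$ is center-by-finite, so by Schur's theorem its torsion elements form a finite characteristic subgroup; that subgroup is normal in $Q$ and therefore trivial, so $K$ is torsion-free with $\ol{C}$ central of finite index and hence $K \cong \Z$ by the transfer. Replacing $\ol{C}$ by $K$, the $\Z_2$ quotient (if nontrivial) really does act faithfully, and your $s^2 = 1$ calculation then correctly yields $Q \cong D_\infty$. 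Step three of your proposal is fine as written.
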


\begin{proof}
Assume $G$ is an infinite virtually cyclic group.
The intersection of the conjugates of a finite index, infinite cyclic subgroup is a normal, finite index, infinite cyclic subgroup $C$.
Let $Q$ be the finite quotient group.  Embed $C$ as a subgroup of index $|Q|$  in an infinite cyclic group $C'$.
There exists a unique $\Z[Q]$-module structure on $C'$ such that $C$ is a $\Z[Q]$-submodule.
Observe that the image of the obstruction cocycle under the map $H^2(Q;C) \to H^2(Q;C')$ is trivial.
Hence $G$ embeds as a finite index subgroup of a semidirect product $G' = C' \rtimes Q$.
Note $G'$ maps epimorphically to $\Z$ (if $Q$ acts trivially) or to $D_{\infty}$ (if $Q$ acts non-trivially).
In either case, $G$ maps epimorphically to a subgroup of finite index in $D_{\infty}$, which must be either infinite cyclic or infinite dihedral.
\end{proof}

In order to see how the reduced $\Nil$-groups relate
to equivariant homology (and hence to the Farrell--Jones
Conjecture), we need \cite[Lemma~3.1]{DQR}, the third auxiliary result.

\begin{lem}[Davis--Quinn--Reich]\label{DW}
Let $G$ be a group of the form $G_1 *_F G_2$ with $|G_i : F | = 2$, and let $\fac$ be the smallest family
of subgroups of $G$ containing $G_1$ and $G_2$. Let $\ol{G}$ be a group of the form $F \rtimes_{\alpha} \Z$,
and let $\ol{\fac}$ be the smallest family of subgroups of
$\ol{G}$ containing $F$. Note that $F$ need not be finite.

\begin{enumerate}

\item  The following exact sequences are split, and hence short exact:
\begin{gather*}
H^{G}_n(E_{\fac}G;\bK_R) \xrightarrow{~f_A~} H^{G}_n(E_{\all}G;\bK_R) \xrightarrow{~\eta_A~} H^{G}_n(E_{\all}G,E_{\fac}G;\bK_R)\\
H^{\ol{G}}_n(E_{\ol{\fac}}\ol{G};\bK_R) \xrightarrow{~f_B~} H^{\ol{G}}_n(E_{\all}\ol{G};\bK_R) \xrightarrow{~\eta_B~} H^{\ol{G}}_n(E_{\all}\ol{G},E_{\ol{\fac}}\ol{G};\bK_R).
\end{gather*}
Here $f_A, \eta_A$ and $f_B, \eta_B$ are inclusion-induced maps.

\item The maps
\begin{gather*}
\eta_A \circ \sigma_A  : \wt{\Nil}_{n-1}(R[F];R[G_1-F],R[G_2-F]) ~\xrightarrow{\cong}~ H^{G}_n(E_{\all}G,E_{\fac}G;\bK_R) \\
\eta_B \circ \sigma_B :  \wt{\Nil}_{n-1}(R[F],\alpha) \oplus  \wt{\Nil}_{n-1}(R[F],\alpha\inv)~\xrightarrow{\cong}~ H^{\ol{G}}_n(E_{\all}\ol{G},E_{\ol{\fac}}\ol{G};\bK_R)
\end{gather*}
are isomorphisms where $\sigma_A$ and $\sigma_B$ are Waldhausen's split injections.
\end{enumerate}
\end{lem}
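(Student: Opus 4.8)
The plan is to compute each of the two equivariant homology groups directly from a one-dimensional model of the relevant classifying space, and then to match the resulting exact sequence with the algebraic $K$-theory decomposition \eqref{eqn:Kamalg} (respectively \eqref{eqn:Ktwist}). Since $E_{\fac}G$ and $E_{\ol\fac}\ol{G}$ will turn out to be $1$-dimensional, their $\bK_R$-homology is computed outright from a cellular Mayer--Vietoris chain complex, so no input from the Farrell--Jones conjecture is needed and there is no circularity.

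\emph{Case (A).}\quad The Bass--Serre tree $T$ of $G=G_1*_FG_2$ is a $G$-CW-complex of dimension one (with $G$ acting without inversions) having vertex set $G/G_1\sqcup G/G_2$ and edge set $G/F$; a subgroup of $G$ fixes a point of $T$ precisely when it is subconjugate to $G_1$ or to $G_2$, so $T$ is a model for $E_{\fac}G$. Expressing $T$ as the $G$-pushout of
\[
G/F\times D^1\;\longleftarrow\; G/F\times\partial D^1\;\longrightarrow\; G/G_1\sqcup G/G_2
\]
and applying $H^G_*(-;\bK_R)$, using $H^G_*(G/L;\bK_R)=K_*(R[L])$, yields a long exact Mayer--Vietoris sequence
\[
\cdots\to K_n(R[F])\to K_n(R[G_1])\oplus K_n(R[G_2])\to H^G_n(E_{\fac}G;\bK_R)\to K_{n-1}(R[F])\to\cdots
\]
whose first map is the difference of the inclusion-induced maps. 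The composite $E_{\fac}G\to E_{\all}G$ induces $f_A$ on the middle term, compatibly with the identity on the $K_*(R[F])$ and $K_*(R[G_i])$ terms and with the canonical maps $K_n(R[G_i])\to K_n(R[G])$. Comparing this sequence with the Mayer--Vietoris sequence computing the relative summand $K_*(R[F]\to R[G_1]\times R[G_2])$ of Waldhausen's decomposition \eqref{eqn:Kamalg}, the five lemma shows that $f_A$ followed by the projection $K_n(R[G])\to K_n(R[F]\to R[G_1]\times R[G_2])$ is an isomorphism in every degree. Hence $f_A$ is a split injection in all degrees, every connecting map of the pair sequence vanishes, part~(1) follows, and $H^G_n(E_{\all}G,E_{\fac}G;\bK_R)\cong\operatorname{coker}(f_A)$. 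Since $\sigma_A$ is by construction the split inclusion of the complementary summand $\wt{\Nil}_{n-1}(R[F];R[G_1-F],R[G_2-F])$ of $K_n(R[G])$, a direct computation of $\operatorname{coker}(f_A)$ identifies it with this $\wt{\Nil}$-group in such a way that $\eta_A\circ\sigma_A$ becomes the identity; in particular $\eta_A\circ\sigma_A$ is an isomorphism.

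\emph{Case (B).}\quad This is formally parallel, with $T$ replaced by the real line $\mathbb{R}$, on which $\ol{G}=F\rtimes_\alpha\Z$ acts through the projection $\ol{G}\to\Z$. A subgroup acts on $\mathbb{R}$ with a fixed point exactly when it lies in $F$, so $\mathbb{R}$ is a model for $E_{\ol\fac}\ol{G}$, with one orbit $\ol{G}/F$ of vertices and one orbit $\ol{G}/F$ of edges (the edge $[0,1]$ joining $0$ to $t\cdot 0$). The corresponding $\ol{G}$-pushout gives a Wang sequence
\[
\cdots\to K_n(R[F])\xrightarrow{~1-\alpha_*~}K_n(R[F])\to H^{\ol{G}}_n(E_{\ol\fac}\ol{G};\bK_R)\to K_{n-1}(R[F])\to\cdots,
\]
and the same comparison, now with the Bass--Heller--Swan--Farrell--Hsiang decomposition \eqref{eqn:Ktwist} and its relative summand $K_*(1-\alpha\colon R[F]\to R[F])$, shows that $f_B$ is a split injection in all degrees, that $\operatorname{coker}(f_B)\cong\wt{\Nil}_{n-1}(R[F],\alpha)\oplus\wt{\Nil}_{n-1}(R[F],\alpha\inv)$, and that $\eta_B\circ\sigma_B$ is an isomorphism.

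The step I expect to be the main obstacle is the comparison in each case: verifying on the nose that the pair sequence of $(E_{\all}G,E_{\fac}G)$, built topologically from the tree, is carried by the forget-control map onto Waldhausen's algebraically defined exact sequence --- in particular that the maps $K_n(R[G_i])\to K_n(R[G])$ and the connecting homomorphisms agree under the identifications, and that the resulting splitting of $f_A$ matches the algebraic splitting $K_n(R[G])\to\wt{\Nil}_{n-1}(R[F];R[G_1-F],R[G_2-F])$ of \cite{Waldhausen_Rings}. Since all the classifying spaces involved are $1$-dimensional there are no homotopy-theoretic difficulties; the content lies entirely in this bookkeeping between the topologically and algebraically defined exact sequences and their natural splittings.
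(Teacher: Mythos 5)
The paper does not prove this lemma: it is quoted verbatim as Lemma~3.1 of \cite{DQR}, with the paper noting only that ``the identification [in Part~(2)] follows from the last paragraph of the proof'' in that reference. So there is no in-paper argument to compare against.

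Your outline reconstructs what one expects the \cite{DQR} argument to look like, and the architecture is sound: the Bass--Serre tree of $G_1*_FG_2$ has vertex orbits $G/G_1\sqcup G/G_2$ and edge orbit $G/F$, its fixed sets $T^H$ are nonempty and contractible exactly when $H$ is subconjugate to $G_1$ or $G_2$, so $T\simeq E_{\fac}G$; likewise $\R$ with the $\ol{G}$-action through $\ol{G}\twoheadrightarrow\Z$ has every isotropy group equal to $F$ (since $F$ is normal, $\ol{\fac}=\sub F$), so $\R\simeq E_{\ol{\fac}}\ol{G}$. Both are $1$-dimensional, so the equivariant homology is computed outright from the pushout/Wang sequence without any appeal to the Farrell--Jones conjecture --- your observation that this avoids circularity is exactly the point. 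The bookkeeping at the end ($\mathrm{proj}_A\circ f_A$ an isomorphism $\Rightarrow$ $f_A$ split injective $\Rightarrow$ the long exact sequence of $(E_\all G,E_\fac G)$ breaks into split short exact sequences, with $\sigma_A$ composed with the quotient map hitting $\operatorname{coker}(f_A)$ isomorphically) is correct.

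However, you have named but not discharged the one step that carries real content: establishing a map of long exact sequences from the Mayer--Vietoris (resp.\ Wang) sequence of the $1$-dimensional model to Waldhausen's (resp.\ Bass--Heller--Swan/Farrell--Hsiang's) sequence, with the identity on the $K_*(R[F])$ and $K_*(R[G_i])$ columns and with \emph{all} squares commuting --- most delicately the square involving the connecting homomorphisms. The square over $K_n(R[G_1])\oplus K_n(R[G_2])$ commutes by naturality of assembly under induction, but the square over $K_{n-1}(R[F])$ requires matching the topologically defined boundary map of the equivariant pair/pushout with the algebraically defined connecting map in Waldhausen's sequence. This is precisely the content of Lemma~3.1 of \cite{DQR}, not a routine verification; labeling it ``bookkeeping'' undersells it. Still, as a proof proposal whose hard step is correctly isolated and whose remaining structure is right, this is a faithful reconstruction of the cited result rather than a flawed argument.
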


The statement of Lemma 3.1 of \cite{DQR} does not explicitly identify the isomorphisms in Part (2) above, but the identification follows from the last
 paragraph of the proof.

It is not difficult to compute
$H^{G}_n(E_{\fac}G;\bK_R)$ and $H^{\ol{G}}_n(E_{\ol{\fac}}\ol{G};\bK_R)$ in terms of a Wang sequence and a
Mayer--Vietoris sequence respectively. An example is in \fullref{Subsec_PSL2Z}.

Next, we further assume $\ol{G} \subset G$ with $|G:\ol{G}|=2$.  Then $C_2 = G/ \ol{G}$ acts on $K_n(R\ol{G}) = H^{\ol{G}}_n(E_{\all} \ol{G}; \bK_R)$ by conjugation.
By \cite[Remark~3.21]{DQR}, there is a $C_2$-action on $H_n^{\ol{G}}(E_\all\ol{G}, E_{\ol{\fac}}\ol{G};\bK_R)$ so that $\eta_B$ and $\theta_{!!}$ below are $C_2$-equivariant.

\begin{lem}\label{lem:DKR_fbc}  Let $n \leq 1$ be an integer.
There is a commutative diagram of $C_2$-equivariant homomorphisms:
\[\begin{diagram}
\node{\wt{\Nil}_{n-1}(R[F],\alpha)\oplus \wt{\Nil}_{n-1}(R[F],\alpha\inv)}
\arrow{s,b}{\begin{pmatrix} i_* & \tau_A\inv i'_* \beta_u^+ \end{pmatrix}}
\arrow{e,t}{\di{\eta_B \circ \sigma_B}}
\node{H_n^{\ol{G}}(E_\all\ol{G}, E_{\ol{\fac}}\ol{G}; \bK_R)}
\arrow{s,b}{\di{\theta_{!!}}}\\
\node{\wt{\Nil}_{n-1}(R[F];\fB_1,\fB_2)}
\arrow{e,t}{\di{\eta_A \circ \sigma_A}}
\node{H_n^G(E_\all G, E_\fac G; \bK_R)}
\end{diagram}\]
Here, the $C_2 =G/\ol{G}$-action on the upper left-hand corner is given in \fullref{prop:induction}, on the upper right it is given by \cite[Remark~3.21]{DQR}, and on each lower corner it is trivial.
\end{lem}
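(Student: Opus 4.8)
The plan is to obtain \fullref{lem:DKR_fbc} by pasting the commutative square of \fullref{prop:induction} onto the ladder of long exact sequences of the two pairs $(E_\all G, E_\fac G)$ and $(E_\all\ol{G}, E_{\ol{\fac}}\ol{G})$. First I would unwind what $\theta_{!!}$ is: the inclusion $\theta:\ol{G}\hookrightarrow G$ induces an induction functor on orbit categories, $\ol{G}/H\mapsto G/H$, and hence for every $\ol{G}$-CW-complex $X$ a natural map $H^{\ol{G}}_n(X;\bK_R)\to H^G_n(G\times_{\ol{G}}X;\bK_R)$; for $X=E_\all\ol{G}\simeq\mathrm{pt}$, so that $G\times_{\ol{G}}X=G/\ol{G}$, the composite with the collapse $G/\ol{G}\to G/G$ is the induction map $\theta_!:K_n(R[\ol{G}])\to K_n(R[G])$. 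Since $\theta$ carries each subgroup of $\ol{\fac}$ into $\fac$ (this uses $\ol{G}\cap G_i=F$ and $F\in\ol{\fac}$), the universal property of $E_\fac G$ yields a $G$-map $G\times_{\ol{G}}E_{\ol{\fac}}\ol{G}\to E_\fac G$, unique up to $G$-homotopy, hence a map of pairs $G\times_{\ol{G}}(E_\all\ol{G}, E_{\ol{\fac}}\ol{G})\to(E_\all G, E_\fac G)$; the induced map on relative homology is $\theta_{!!}$, agreeing with the map of \cite[Remark~3.21]{DQR}.

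The key step is that, by naturality of the long exact sequence of a pair, these fit into a commutative ladder whose absolute-term vertical maps are $\theta_!$ and whose relative-term vertical maps are $\theta_{!!}$; in particular the square
\[
\begin{CD}
H^{\ol{G}}_n(E_\all\ol{G};\bK_R) @>{\eta_B}>> H^{\ol{G}}_n(E_\all\ol{G},E_{\ol{\fac}}\ol{G};\bK_R)\\
@V{\theta_!}VV @VV{\theta_{!!}}V\\
H^G_n(E_\all G;\bK_R) @>{\eta_A}>> H^G_n(E_\all G,E_\fac G;\bK_R)
\end{CD}
\]
commutes. Under the identifications $K_n(R[\ol{G}])=H^{\ol{G}}_n(E_\all\ol{G};\bK_R)$ and $K_n(R[G])=H^G_n(E_\all G;\bK_R)$, \fullref{prop:induction} gives $\sigma_A\circ\SmMatrix{i_* & \tau_A\inv i'_*\beta_u^+}=\theta_!\circ\sigma_B$. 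Composing with $\eta_A$ and $\eta_B$ and splicing in the square above, I get
\[
(\eta_A\circ\sigma_A)\circ\SmMatrix{i_* & \tau_A\inv i'_*\beta_u^+}~=~\eta_A\circ\theta_!\circ\sigma_B~=~\theta_{!!}\circ(\eta_B\circ\sigma_B),
\]
which, recalling that $\eta_A\circ\sigma_A$ and $\eta_B\circ\sigma_B$ are the isomorphisms of \fullref{DW}(2), is precisely the commutativity asserted in \fullref{lem:DKR_fbc}.

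It then remains to check $C_2$-equivariance of the four edges. The left vertical map is $C_2$-equivariant by \fullref{prop:induction}, for the summand-interchanging action on $\wt{\Nil}_{n-1}(R[F],\alpha)\oplus\wt{\Nil}_{n-1}(R[F],\alpha\inv)$ and the trivial action on $\wt{\Nil}_{n-1}(R[F];\fB_1,\fB_2)$. The maps $\theta_{!!}$ and $\eta_B$ are $C_2$-equivariant by \cite[Remark~3.21]{DQR}, for the conjugation action on $H^{\ol{G}}_n(E_\all\ol{G};\bK_R)=K_n(R[\ol{G}])$ and the $C_2$-action on the relative term defined there. Finally, the $C_2$-action on the two $G$-equivariant groups is trivial --- inner, hence trivial, on $H^G_n(E_\all G;\bK_R)=K_n(R[G])$ as noted just before \fullref{prop:induction}, and therefore trivial on the relative term by naturality --- so $\eta_A\circ\sigma_A$ is $C_2$-equivariant. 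Hence the square is a commutative square of $C_2$-equivariant maps.

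The step I expect to require the most care is the bookkeeping: verifying that the $C_2$-action on $H^{\ol{G}}_n(E_\all\ol{G},E_{\ol{\fac}}\ol{G};\bK_R)$ from \cite[Remark~3.21]{DQR} matches the summand-swapping action of \fullref{prop:induction} transported along the isomorphism $\eta_B\circ\sigma_B$, and confirming that the construction of $\theta_{!!}$ used in \cite{DQR} is indeed compatible with $\theta_!$ via the two long exact sequences (equivalently, that it is the map of pairs above). These are routine once the families $\ol{\fac}$ and $\fac$ are correctly matched under $\theta$, but they are the points at which the argument must be nailed down.
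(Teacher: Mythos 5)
Your proof follows exactly the paper's one-line argument — combine \fullref{prop:induction} with the $C_2$-equivariance of $\eta_B$ (and $\theta_{!!}$) from \cite[Remark~3.21]{DQR} — just with more of the bookkeeping around $\theta_{!!}$ and the ladder of long exact sequences spelled out. The reasoning is correct and matches the paper's approach.
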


\begin{proof}
This follows from \fullref{prop:induction} and the $C_2$-equivariance of $\eta_B$.
\end{proof}

Recall that if $C_2 = \{1,T\}$ and if $M$ is a $\Z[C_2]$-module
then the \textbf{coinvariant group} $M_{C_2} = H_0(C_2 ;M)$ is the
quotient group of $M$ modulo the subgroup  $\{m - Tm \mid m \in
M\}$.

\begin{lem}\label{lem:DKR_induction}  Let $n \leq 1$ be an integer.
There is an induction-induced isomorphism:
\[
\left(H_n^{\ol{G}}(E_\all\ol{G}, E_{\ol{\fac}}\ol{G}; \bK_R)\right)_{C_2}
~\longrightarrow~
H_n^G(E_{\fac \, G \cup \sub\, \ol{G}} G, E_{\fac }G; \bK_R).
\]
\end{lem}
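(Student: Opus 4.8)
The plan is to compute $H^G_n(E_{\fac\,G\cup\sub\,\ol{G}}G,E_{\fac}G;\bK_R)$ by realizing $E_{\fac\,G\cup\sub\,\ol{G}}G$ as a join, running the resulting skeletal spectral sequence, and then collapsing it by observing that the $C_2=G/\ol{G}$--module appearing on its $E^2$--page is induced from the trivial subgroup; the edge homomorphism will then be the map induced by $\ol{G}\hookrightarrow G$ and will exhibit the target as the $C_2$--coinvariants of $H^{\ol{G}}_n(E_{\all}\ol{G},E_{\ol{\fac}}\ol{G};\bK_R)$.

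First I would fix models. Since $|G_i:F|=2$, the Bass--Serre tree $T$ of $G=G_1*_FG_2$ is a bi-infinite line; its vertex stabilizers are the conjugates of $G_1$ and $G_2$, so $T$ is a model for $E_{\fac}G$, and since $F$ is normal in each $G_i$ it acts trivially on $T$, so restricting the $G$--action to $\ol{G}=F\rtimes_\alpha\Z$ exhibits $T|_{\ol{G}}$ as a model for $E_{\ol{\fac}}\ol{G}$. Next, pull back a free contractible $C_2$--CW complex $EC_2$ (with one free cell in each dimension) along the quotient map $q\colon G\to C_2=G/\ol{G}$, obtaining a model $\mathbb{E}:=q^{*}EC_2$ for $E_{\sub\,\ol{G}}G$ on which $\ol{G}$ acts trivially. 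Then the join $T*\mathbb{E}$ is a model for $E_{\fac\,G\cup\sub\,\ol{G}}G$, because $(T*\mathbb{E})^{H}=T^{H}*\mathbb{E}^{H}$ is contractible precisely when $H\in\fac$ (so $T^{H}\simeq *$) or $H\in\sub\,\ol{G}$ (so $\mathbb{E}^{H}\simeq *$), and is empty otherwise. A join is the homotopy pushout of $T\leftarrow T\times\mathbb{E}\to\mathbb{E}$ and $H^{G}(-;\bK_R)$ is excisive, so the relative term is identified with $\pi_{*}$ of the cofibre of the projection-induced map $H^{G}(T\times\mathbb{E};\bK_R)\to H^{G}(\mathbb{E};\bK_R)$.

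Next I would filter $\mathbb{E}=EC_2$ by its skeleta, one free $G/\ol{G}$--cell in each dimension, and feed this filtration into the cofibre above. The shearing homeomorphism gives a $G$--homeomorphism $T\times(G/\ol{G})\cong G\times_{\ol{G}}(T|_{\ol{G}})$, so the $k$--th associated graded piece of the filtered cofibre is a $k$--fold suspension of the cofibre of $G\times_{\ol{G}}E_{\ol{\fac}}\ol{G}\to G\times_{\ol{G}}E_{\all}\ol{G}$; by the Davis--L\"uck induction isomorphism \cite{DL} its $\bK_R$--homology is $\mathcal{K}_{*}:=H^{\ol{G}}_{*}(E_{\all}\ol{G},E_{\ol{\fac}}\ol{G};\bK_R)$. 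The $d^{1}$--differential is the cellular boundary of $EC_2$, which is alternately $1-T$ and $1+T$ for the generator $T$ of $C_2=G/\ol{G}$ acting on $\mathcal{K}_{*}$ by conjugation (by any $t_1\in G\setminus\ol{G}$). Hence one obtains a spectral sequence with $E^{2}_{p,q}=H_{p}(C_2;\mathcal{K}_{q})$ (group homology), converging to $H^{G}_{p+q}(E_{\fac\,G\cup\sub\,\ol{G}}G,E_{\fac}G;\bK_R)$.

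The crux is to show $\mathcal{K}_{q}$ is an induced $\Z[C_2]$--module for $q\leqslant 1$. By \fullref{DW} the map $\eta_B\circ\sigma_B$ is an isomorphism $\wt{\Nil}_{q-1}(R[F],\alpha)\oplus\wt{\Nil}_{q-1}(R[F],\alpha\inv)\xrightarrow{\ \cong\ }\mathcal{K}_{q}$, and by \fullref{prop:induction} (together with the $C_2$--equivariance of $\eta_B$, cf.\ \cite[Remark~3.21]{DQR}) the conjugation action of $C_2$ on $\mathcal{K}_{q}$ corresponds under this isomorphism to the involution $\SmMatrix{0 & \eps_{*}\\ \eps_{*}\inv & 0}$ interchanging the two $\wt{\Nil}$--summands; any $\Z[C_2]$--module of this shape is isomorphic to $\Z[C_2]\otimes_{\Z}\wt{\Nil}_{q-1}(R[F],\alpha)$, so by Shapiro's lemma $H_{p}(C_2;\mathcal{K}_{q})$ vanishes for $p>0$ and equals $(\mathcal{K}_{q})_{C_2}$ for $p=0$. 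Thus the spectral sequence degenerates onto the column $p=0$, giving $H^{G}_{n}(E_{\fac\,G\cup\sub\,\ol{G}}G,E_{\fac}G;\bK_R)\cong(\mathcal{K}_{n})_{C_2}$ for $n\leqslant 1$; the isomorphism is the edge map, namely the inclusion of the $0$--skeleton $G/\ol{G}\hookrightarrow\mathbb{E}$ followed by the induction isomorphism $\mathcal{K}_{n}\cong H^{G}_{n}(G/\ol{G},T\times(G/\ol{G});\bK_R)$, which is exactly the map induced by $\ol{G}\hookrightarrow G$; it is surjective with kernel $(1-T)\mathcal{K}_{n}$, so it descends to the asserted induction-induced isomorphism on coinvariants. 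The main obstacle is this last step: one must check that the $C_2$--action produced by the $G/\ol{G}$--cells of $\mathbb{E}$ in the spectral sequence is literally the conjugation action of \fullref{prop:induction} and \cite[Remark~3.21]{DQR}, and that the join pushout, the shearing homeomorphism, the induction isomorphism and the skeletal filtration are all compatibly $C_2$--equivariant; it is precisely because the induced-module description of $\mathcal{K}_{q}$ via \fullref{prop:induction} is available only for $q\leqslant 1$ that the hypothesis $n\leqslant 1$ is imposed.
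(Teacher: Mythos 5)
Your proof takes essentially the same route as the paper's. Both arguments reduce the claim to the observation that the $\Z[C_2]$-module $\mathcal{K}_q=H^{\ol{G}}_q(E_\all\ol{G},E_{\ol{\fac}}\ol{G};\bK_R)$ is induced for $q\leqslant 1$ (via \fullref{DW}(2), \fullref{prop:induction} and \fullref{lem:DKR_fbc}), and then use the collapse of the resulting Atiyah--Hirzebruch (equivalently, skeletal-filtration) spectral sequence for $H^{C_2}_n(EC_2;-)$ to land on the coinvariants $(\mathcal{K}_n)_{C_2}$. The only genuine difference is one of packaging: the paper imports the identifications $H^{C_2}_n(EC_2;(\bK/\bK_\fac)(C_2))\cong H^G_n(E_{\fac G\cup\sub\ol{G}}G,E_\fac G;\bK)$ and the $\Z[C_2]$-module structure directly from \cite[Lemmas~4.1, 4.4, 4.6 and Remark~3.21]{DQR}, whereas you reprove them by building explicit models (the Bass--Serre line for $E_{\fac}G$, $q^*EC_2$ for $E_{\sub\ol{G}}G$, and their join) and applying the shearing homeomorphism plus the Davis--L\"uck induction isomorphism. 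Your version is more self-contained and makes the appearance of the conjugation action geometrically transparent, at the cost of having to verify the equivariance compatibilities you rightly flag at the end; the paper's version is shorter but opaque without \cite{DQR} in hand. Both correctly localize the restriction $n\leqslant 1$ to the availability of the induced-module description of $\mathcal{K}_q$.
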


\begin{proof}
Recall $G/\ol{G} = C_2$.  Since $\ol{\fac} = \fac \cap \ol{G}$, by \cite[Lemma~4.1(i)]{DQR} there is a identification of $\Z[C_2]$-modules
\[
H_n^{\ol{G}}(E_\all\ol{G}, E_{\ol{\fac}}\ol{G}; \bK)
~=~
\pi_n(\bK/\bK_\fac)(G/\ol{G}).
\]
The $C_2$-coinvariants can be interpreted as a $C_2$-homology group:
\[
\left(\pi_n(\bK/\bK_\fac)(C_2)\right)_{C_2}
~=~
H_0^{C_2}(EC_2; \pi_n(\bK/\bK_\fac)(C_2)).
\]
By \fullref{DW}(2) and \fullref{lem:DKR_fbc}, the coefficient $\Z[C_2]$-module is induced from a $\Z$-module.  By the Atiyah--Hirzebruch spectral sequence (which collapses at $E^2$), note
\[
H_0^{C_2}(EC_2; \pi_n(\bK/\bK_\fac)(C_2))
~=~
H_n^{C_2}(EC_2; (\bK/\bK_\fac)(C_2)).
\]
Therefore, by \cite[Lemma~4.6, Lemma 4.4, Lemma 4.1]{DQR}, we conclude:
\begin{eqnarray*}
H_n^{C_2}(EC_2; (\bK/\bK_\fac)(C_2))
&=&
H_n^G(E_{\sub\ol{G}}G; \bK/\bK_\fac)\\
&=&
H_n^G(E_{\fac \, G \cup \sub\, \ol{G}}G; \bK/\bK_\fac)\\
&=&
H_n^G(E_{\fac \, G \cup \sub\, \ol{G}}G, E_{\fac}G; \bK).
\end{eqnarray*}
\end{proof}

The identifications in the above proof are extracted from the proof of  \cite[Theorem~1.5]{DQR}.

\begin{proof}[Proof of \fullref{isom_conj}]
Let $\varphi :\, \g \to G$ be a homomorphism of groups.  Using the Transitivity Principle applied to the families
$\varphi^*\fbc \subset \varphi^*\vc$, it suffices to show that
\[
H^H_n(E_{\all} H, E_{\varphi^*\fbc | H}H; \bK_R) = 0
\]
for all $n \leq 1$ and for all $H \in \varphi^*\vc - \varphi^* \fbc$.  To identify the family $\varphi^*\fbc | H$ we will
use two facts, the proofs of which are left to the reader.

\begin{itemize}
\item If $q : A \to B$ is a group epimorphism with finite kernel, then $\fbc \, A = q^* \fbc \, B$.
(The key step is to show that an epimorphic image of a finite-by-cyclic group is finite-by-cyclic.)


\item  If $q : A \to B = G_1 *_F G_2$ is an group epimorphism, then $A = q^{-1}G_1 *_{q^{-1}F} q^{-1}G_2$ and
$\fac \, A = q^*\fac \, B$.
\end{itemize}

Let $\varphi|:\, H \to \varphi(H)$ denote the restriction of $\varphi$ to $H$.  By the definition of both sides,
\[
\varphi^*\fbc | H ~=~ \varphi|^*(\fbc \, \varphi(H)).
\]
Since $H \in \varphi^* \vc - \varphi^* \fbc$, we have $\varphi(H) \in \vc - \fbc$. So, by \fullref{lem_vctypes}, there is
an epimorphism $p:\, \varphi(H) \to D_{\infty} = \Z_2 *_1 \Z_2$ with finite kernel.  By the first fact above
\[
\varphi|^*(\fbc \, \varphi(H)) ~=~ \varphi|^*(p^*(\fbc \, D_\infty)).
\]
Next, write $\ol{H} := (p \circ \varphi|)^{-1} (\Z)$. Note
\begin{eqnarray*}
\varphi|^*(p^*(\fbc \, D_\infty)) &=& (p \circ \varphi|)^*(\fbc \, D_\infty) \\
&=& (p \circ \varphi|)^*(\fac \, D_\infty \cup \sub \, \Z)\\
&=& (p \circ \varphi|)^*(\fac \, D_\infty ) \cup  (p \circ \varphi|)^*(\sub \, \Z)\\
&=& \fac \, H \cup \sub \, \ol{H},
\end{eqnarray*}
where the last equality uses the second fact above.
Thus it suffices to prove, for any group $H$ mapping epimorphically to $D_\infty$ and for all $n \leq 1$, that
\[
H^H_n(E_{\all} H, E_{\fac \, H \cup \sub \, \ol{H}}; \bK_R) = 0
\]
where $\ol{H}$ is the inverse image of the maximal infinite cyclic subgroup of $D_\infty$.

Consider the following composite
\begin{multline*}
\left(H_n^{\ol{H}}(E_{\all}\ol{H}, E_{\ol{\fac}}\ol{H};\bK_R)\right)_{H/\ol{H}} ~\xrightarrow{~\alpha~}~ H^H_n( E_{\fac \, H \cup \sub \, \ol{H}}H,E_{\fac} H; \bK_R) \\
~\xrightarrow{~\beta~}~ H^H_n(E_{\all} H, E_\fac H; \bK_R).
\end{multline*}
The map $\alpha$ exists and is an isomorphism by \fullref{lem:DKR_induction}.
Apply $C_2$-covariants to the commutative diagram in the statement of \fullref{lem:DKR_fbc}.  In this diagram of $C_2$-coinvariants,
the top and bottom are isomorphisms  by \fullref{DW}(2)  and the left map is an isomorphism by \fullref{prop:induction} and \fullref{maink}.    Hence the right-hand map, which is $\beta \circ \alpha$, is an isomorphism  for all $n \leq 1$.  It follows that $\beta$ is an isomorphism  for all $n \leq 1$.
So, by the exact sequence of a triple, we obtain
\[
H_n^H(E_{\all}H, E_{\fac \, H \cup \sub\, \ol{H}}H; \bK_R) ~=~ 0
\]
for all $n \leq 1$ as desired.
\end{proof}

\subsection{$K$-theory of the modular group}\label{Subsec_PSL2Z}

Let $\g = \Z_2 * \Z_3 = PSL_2(\Z)$.  The following theorem follows
from applying our main theorem and  the recent proof \cite{BLR} of
the Farrell--Jones conjecture in $K$-theory for word hyperbolic
groups.

The Cayley graph for  $\Z_2 * \Z_3$ with respect to the generating
set given by the nonzero elements of $\Z_2$ and $\Z_3$ has the
quasi-isometry type of the usual Bass--Serre tree for the
amalgamated product (\fullref{Fig_Tree}).
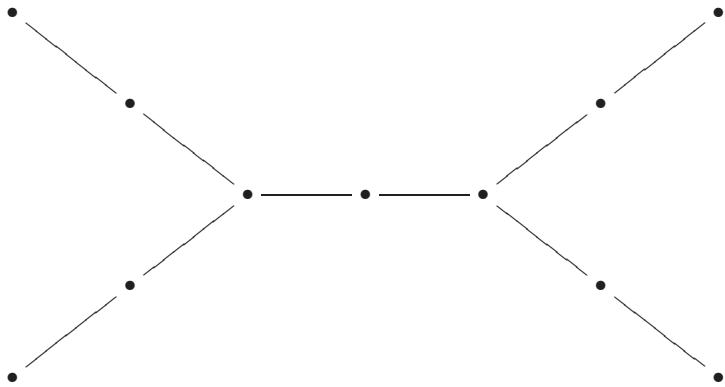
\begin{figure}[!ht]\begin{center}$\xymatrix@C+10pt{
\bullet \ar@{-}[dr] & & & &  & & \bullet \\
& \bullet \ar@{-}[dr] & & & & \bullet \ar@{-}[ur] & \\
& & \bullet \ar@{-}[r] &  \bullet \ar@{-}[r] & \bullet \ar@{-}[ur] \ar@{-}[dr]& &\\
& \bullet \ar@{-}[ur]  & & & & \bullet \ar@{-}[dr] &\\
\bullet \ar@{-}[ur] & & & & &  & \bullet }$\end{center}
\caption{Bass--Serre tree for $PSL_2(\Z)$}\label{Fig_Tree}
\end{figure}
This is an infinite tree with alternating vertices of valence two
and three. The group $\g$ acts on the tree, with the generator of
order two acting by reflection through an valence two vertex and the
generator of order three acting by rotation through an adjoining
vertex of valence three.

Any geodesic triangle in the Bass--Serre tree has the property
that the union of two sides is the union of all three sides.  It
follows that the Bass--Serre graph is $\delta$-hyperbolic for any
$\delta > 0$, the Cayley graph is $\delta$-hyperbolic for some
$\delta > 0$, and hence $\g$ is a hyperbolic group.

\begin{thm} \label{Jim}
For any ring $R$ and integer $n$,
\begin{multline*}
K_n(R[\g]) = ( K_n(R[\Z_2]) \oplus K_n(R[\Z_3]))/K_n(R) \\
\oplus \bigoplus_{\cM_C}  \wt {\Nil}_{n-1}(R) \oplus
\wt {\Nil}_{n-1}(R) \oplus \bigoplus_{\cM_D} \wt
{\Nil}_{n-1}(R)
\end{multline*}
where $\cM_C$ and $\cM_D$ are the set of conjugacy classes of
maximal infinite cyclic subgroups and maximal infinite dihedral
subgroups, respectively. Moreover, all virtually cyclic subgroups
of $\g$ are cyclic or infinite dihedral.
\end{thm}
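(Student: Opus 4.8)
The plan is to combine the validity of the Farrell--Jones Conjecture for the word hyperbolic group $\g$ with the Nil--Nil Theorem and a Lück--Weiermann-style decomposition of $E_\vc\g$ over its maximal virtually cyclic subgroups. Since $\g=PSL_2(\Z)$ is hyperbolic (as just shown), \cite{BLR} gives an isomorphism $H^\g_n(E_\vc\g;\bK_R)\xrightarrow{\ \cong\ }H^\g_n(E_\all\g;\bK_R)=K_n(R[\g])$ for all $n\in\Z$, so it suffices to compute the equivariant homology of $E_\vc\g$. (Equivalently, Waldhausen's theorem for $\g=\Z_2*_1\Z_3$ already splits off the summand $(K_n(R[\Z_2])\oplus K_n(R[\Z_3]))/K_n(R)$, and the content is the identification of the remaining Waldhausen $\wt\Nil$-group.)

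First I would classify the virtually cyclic subgroups of $\g$. By \fullref{lem_vctypes} an infinite virtually cyclic subgroup has the form $F\rtimes_\alpha\Z$ or $G_1*_FG_2$ with $F$ finite. But $\g$ acts on its Bass--Serre tree $T$ with finite stabilizers (cyclic of order $\le 3$), so a nontrivial finite subgroup $F$ fixes a single vertex of $T$; any subgroup normalizing $F$ then also fixes that vertex and is therefore finite. Hence $\g$ has no subgroup $F\rtimes_\alpha\Z$ with $F\ne 1$, and in any $G_1*_FG_2\le\g$ one must have $F=1$, so such a subgroup is $\Z_2*\Z_2=D_\infty$. This proves the last assertion: every virtually cyclic subgroup of $\g$ is cyclic (finite or infinite) or infinite dihedral. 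The same tree argument shows that every maximal virtually cyclic subgroup $V$ is self-normalizing: an element normalizing an infinite cyclic $V=\langle c\rangle$ either centralizes $c$ -- producing a $\Z^2$ (impossible in a hyperbolic group) or a strictly larger cyclic subgroup (contradicting maximality) -- or inverts $c$, in which case $V$ already lies in the infinite dihedral subgroup generated; a hyperbolic-group argument also gives that two maximal virtually cyclic subgroups with infinite intersection coincide.

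Next I would invoke the Lück--Weiermann decomposition of $E_\vc\g$ relative to $E_\fin\g$ (as carried out for such groups in \cite{DQR}): by the independence and self-normalizing properties just established, passing from $E_\fin\g$ to $E_\vc\g$ is effected by a single $\g$-pushout indexed by the set $\cM$ of conjugacy classes of maximal virtually cyclic subgroups, yielding a natural splitting
\[
H^\g_n(E_\vc\g;\bK_R)\ \cong\ H^\g_n(E_\fin\g;\bK_R)\ \oplus\ \bigoplus_{[V]\in\cM}H^{V}_n\big(E_\all V,\ E_{\fin|V}V;\bK_R\big).
\]
Here $E_\fin\g$ may be taken to be $T$, whose Mayer--Vietoris sequence, split by the retractions $\Z_2\to 1$ and $\Z_3\to 1$, identifies $H^\g_n(E_\fin\g;\bK_R)$ with $(K_n(R[\Z_2])\oplus K_n(R[\Z_3]))/K_n(R)$. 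It remains to evaluate the relative summand for each maximal $V$, which is where \fullref{DW}, \fullref{dcor} and \fullref{maink} enter. If $V$ is finite then $E_{\fin|V}V=E_\all V$ is a point and the summand vanishes. If $V\cong\Z$, then $E_{\fin|V}V=E\Z$ with $H^\Z_n(E\Z;\bK_R)=K_n(R)\oplus K_{n-1}(R)$, so by Bass--Heller--Swan the summand is $K_n(R[\Z])/(K_n(R)\oplus K_{n-1}(R))\cong\wt\Nil_{n-1}(R)\oplus\wt\Nil_{n-1}(R)$; these classes form $\cM_C$. If $V\cong D_\infty$, then $E_{\fin|V}V$ is the line, and \fullref{dcor}(ii) -- which itself rests on Waldhausen's theorem and the Nil--Nil Theorem \fullref{maink} -- identifies the summand with $K_n(R[D_\infty])/\big((K_n(R[\Z_2])\oplus K_n(R[\Z_2]))/K_n(R)\big)\cong\wt\Nil_{n-1}(R)$; these classes form $\cM_D$. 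Assembling the pieces gives the displayed formula.

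The step I expect to be the main obstacle is the Lück--Weiermann decomposition in the third paragraph: one must verify that $E_\vc\g$ is built from $E_\fin\g$ by a $\g$-pushout over $\cM$ and that the resulting Mayer--Vietoris sequence splits, which depends precisely on the independence of the maximal virtually cyclic subgroups and on $N_\g V=V$ -- both verified above from the action on $T$. One also has to check carefully that the two iso-types of maximal virtually cyclic subgroups match the sets $\cM_C$ (those $\cong\Z$, equivalently the self-normalizing maximal infinite cyclic subgroups) and $\cM_D$ (those $\cong D_\infty$). Once this decomposition is in place, the remaining computations are routine Mayer--Vietoris arguments together with direct applications of \fullref{dcor} and \fullref{maink}.
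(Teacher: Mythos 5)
Your proof is correct and follows the same overall architecture as the paper's: invoke \cite{BLR} for the hyperbolic group $\g$, split $K_n(R[\g])$ into the $E_\fin$-homology plus the relative term, decompose the relative term over conjugacy classes of maximal virtually cyclic subgroups, and evaluate the $\Z$ and $D_\infty$ contributions via Bass--Heller--Swan, Waldhausen, and \fullref{dcor}/\fullref{maink}. The one genuinely different ingredient is your classification of the virtually cyclic subgroups of $\g$. The paper applies the Kurosh subgroup theorem to see that every subgroup of $\Z_2*\Z_3$ is a free product of copies of $\Z_2$, $\Z_3$, $\Z$, and then uses a first-Betti-number bound to exclude $F_2$ and rule out any virtually cyclic subgroup other than a cyclic or infinite dihedral one. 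You instead argue directly from the action on the Bass--Serre tree $T$: since edge stabilizers are trivial, a nontrivial finite subgroup $F$ fixes a unique vertex, so its normalizer is finite; this kills both $F\rtimes_\alpha\Z$ with $F\neq 1$ and $G_1*_FG_2$ with $F\neq 1$, leaving exactly cyclic and $\Z_2*\Z_2$ subgroups. Your tree argument is shorter and more geometric, and it has the added advantage of producing, almost for free, the self-normalizing and independence properties of maximal virtually cyclic subgroups that the Lück--Weiermann pushout requires -- properties that the paper implicitly delegates to the general hyperbolic-group results of \cite{Lueck} and \cite{Juan-PinedaLeary}. The paper's Kurosh route, in exchange, gives a cleaner description of all subgroups of $\g$, which is handy if one later wants to parametrize $\cM_C$ and $\cM_D$ explicitly (as the subsequent Remark does). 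One small phrasing issue in your normalizer argument: when an element inverts a generator $c$ of a maximal infinite cyclic $V$, the correct conclusion is that $V$ was not a maximal \emph{virtually cyclic} subgroup (it sits inside an infinite dihedral one), so such $V$ simply do not appear in $\cM_C$; as you have it, ``$V$ already lies in the infinite dihedral subgroup'' reads as though $V$ remains in the indexing set, which could confuse a reader. With that caveat, the argument is sound.
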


\begin{proof}
By \fullref{DW}, the exact sequence of
$(E_\all\g,E_\fin\g)$ is short exact and split:
\[
H_n^{\g}(E_\fin\g; \bK_R) \to H_n^{\g}(E_\all\g; \bK_R) \to H_n^{\g}(E_\all\g, E_\fin\g; \bK_R).
\]
Then, by the Farrell--Jones Conjecture~\cite{BLR} for word
hyperbolic groups, we obtain
\begin{eqnarray*}
K_n(R[\g]) &=& H_n^{\g}(E_\fin\g; \bK_R) \oplus H_n^{\g}(E_\all\g, E_\fin\g; \bK_R)\\
\label{vc,fin}  &=& H_n^{\g}(E_\fin\g; \bK_R) \oplus H_n^{\g}(E_\vc\g, E_\fin\g; \bK_R).
\end{eqnarray*}

Observe $E_\fin\g$ is constructed as a pushout of $\g$-spaces
\[
\begin{CD}
\g \sqcup \g @>>> \g/\Z_2 \sqcup \g/\Z_3\\
@VVV @VVV\\
\g \times D^1 @>>> E_\fin\g.
\end{CD}
\]
Then $E_\fin\g$ is the Bass--Serre tree for $\g = \Z_2 * \Z_3$.
Note that $H^{\g}_*(\g/H; \bK_R) = K_*(R[H])$.  The pushout gives,
after canceling a $K_n(R)$ term, a split long exact sequence
\[
\cdots \to K_n(R) \to K_n(R[\Z_2]) \oplus K_n(R[\Z_3]) \to H_n^{\g}(E_{\fin}\g; \bK_R) \to K_{n-1}(R) \to \cdots.
\]
Hence
\[
H_n^{\g}(E_\fin\g; \bK_R) ~=~  ( K_n(R[\Z_2]) \oplus K_n(R[\Z_3]))/K_n(R).
\]

Next, for a word hyperbolic group $G$,
\[
H^G_n(E_{\vc}G,E_{\fin}G; \bK) \cong \bigoplus_{[V] \in \cM(G)} H^V_n(E_{\vc}V,E_{\fin}V; \bK)
\]
where $\cM(G)$ is the set of conjugacy classes of maximal
virtually cyclic subgroups of $G$ (see \cite[Theorem~8.11]{Lueck} and \cite{Juan-PinedaLeary}).  The geometric
interpretation of this result is that $E_{\vc}G$ is
obtained by coning off each geodesic in the tree $E_{\fin}G$; then
apply excision.

The Kurosh subgroup theorem implies that a subgroup of $\Z_2 *
\Z_3$ is a free product of $\Z_2$'s, $\Z_3$'s, and $\Z$'s.  Note
that $\Z_2 * \Z_3 = \langle a,b ~|~ a^2 = 1 = b^3 \rangle$,  $\Z_3 *
\Z_3 = \langle c,d ~|~ c^3 = 1 = d^3 \rangle$, and  $\Z_2 * \Z_2 *
\Z_2 = \langle e,f,g ~|~ e^2 = f^2 = g^2= 1 \rangle$ have free
subgroups of rank 2, for example $\langle ab,ab^2\rangle$, $\langle
cd, cd^2\rangle$, and $\langle ef, fg \rangle$.  On the other hand,
the free group $F_2$ rank 2  is not a virtually cyclic group since
its first Betti number $\beta_1(F_2) = \text{rank}~H_1(F_2) =
2$, while for a virtually cyclic group $V$, transferring to the
cyclic subgroup $C\subset V$ of finite index shows that
$\beta_1(V)$ is 0 or 1. Subgroups of virtually cyclic groups are
also virtually cyclic. Therefore all virtually cyclic subgroups of
$\g$ are cyclic or infinite dihedral.

By the fundamental theorem of $K$-theory and Waldhausen's Theorem
(\ref{DW}):
\begin{eqnarray*}
H_n^{\Z}(E_{\vc}\Z, E_{\fin}\Z; \bK_R) &=&  \wt {\Nil}_{n-1}(R) \oplus  \wt {\Nil}_{n-1}(R) \\
H_n^{D_{\infty}}(E_{\vc}D_{\infty}, E_{\fin}D_{\infty}; \bK_R) &=&   \wt \Nil_{n-1}(R;R,R)
\end{eqnarray*}

Finally, by \fullref{dcor}(1), we obtain exactly one type of
Nil-group:
\[
\wt{\Nil}_{n-1}(R;R,R) ~\cong~ \wt{\Nil}_{n-1}(R).
\proved\]
\end{proof}

\begin{rem}
The sets $\cM_C$ and $\cM_D$ are countably infinite.  This can be
shown by parameterizing these subsets either: combinatorially (using
that elements in $\g$ are words in $a,b,b^2$), geometrically
(maximal virtually cyclic subgroups correspond to stabilizers of
geodesics in the Bass--Serre tree $E_{\fin}\g$, where the geodesic
may or may not be invariant under an element of order 2), or number
theoretically (using solutions to Pell's equation and Gauss' theory
of binary quadratic forms \cite{S}).
\end{rem}

Let us give an overview and history of some related work. The
Farrell--Jones Conjecture and the classification of virtually cyclic
groups (\ref{lem_vctypes}) focused attention on the
algebraic $K$-theory of groups mapping to the infinite dihedral
group.  Several years ago  James Davis and Bogdan Vajiac  outlined a
unpublished proof of \fullref{first} when $n \leqslant 0$ using
controlled topology and hyperbolic geometry. Lafont and Ortiz~\cite{LafontOrtiz} proved that $\wt{\Nil}_n(\Z[F];\Z[V_1-
F],\Z[V_2- F])=0$ if and only if
$\wt{\Nil}_n(\Z[F],\alpha)=0$ for any virtually cyclic group
$V$ with an epimorphism $V \to D_{\infty}$ and $n=0,1$. More
recently, Lafont--Ortiz~\cite{LO2} have studied the more general
case of the $K$-theory $K_n(R[G_1 *_F G_2])$ of an injective
amalgam, where $F, G_1, G_2$ are finite groups.  Finally, we
mentioned the paper \cite{DQR}, which was written in parallel with
this one; it an alternate proof of \fullref{first}.
Also, \cite{DQR} provides several auxiliary results used in \fullref{Third} of
this paper. The Nil-Nil isomorphism of \fullref{first}
has been used in a geometrically
motivated computation of Lafont--Ortiz~\cite[Section~6.4]{LO_simplex}.


\section{Codimension 1 splitting and semi-splitting}\label{codim1}

We shall now give a topological interpretation of the
Nil-Nil \fullref{Thm_HigherDKR}, proving in \fullref{Thm_TopSemisplit}
that every homotopy equivalence of finite CW-complexes $f:M \to X=X_1\cup_YX_2$ with $X_1,X_2,Y$ connected and $\pi_1(Y) \to \pi_1(X)$ injective is ``semi-split'' along $Y \subset X$, assuming that $\pi_1(Y)$ is of finite index in $\pi_1(X_2)$.
Indeed, the proof of \fullref{Thm_HigherDKR} is motivated by the codimension 1 splitting obstruction theory of Waldhausen~\cite{Waldhausen_1969}, and the subsequent algebraic $K$-theory decomposition theorems of
Waldhausen~\cite{Waldhausen_1973,Waldhausen_Rings}.
The papers \cite{Waldhausen_1969,Waldhausen_1973}  developed both an algebraic splitting obstruction theory for chain complexes over injective generalized
free products, and a geometric codimension 1 splitting obstruction theory; the geometric splitting obstruction is the algebraic splitting obstruction of the cellular chain complex. There are
parallel theories for the separating type (A) (amalgamated free product) and
the non-separating type (B) (HNN extension). We first briefly outline the theory, mainly for type (A).

The cellular chain complex of the universal
cover $\wt{X}$ of a connected CW-complex $X$ is a based
free $\Z[\pi_1(X)]$-module chain complex $C(\wt{X})$ such that
$H_*(\wt{X})=H_*(C(\wt{X}))$.
The \textbf{kernel $\Z[\pi_1(X)]$-modules} of a map  $f:M \to X$
are defined by
\[
K_*(M)~:=~H_{*+1}(\wt{f}:\wt{M} \to \wt{X})
\]
with $\wt{M}:=f^*\wt{X}$ the pullback cover of $M$ and
$\wt{f}:\wt{M} \to \wt{X}$ a
$\pi_1(X)$-equivariant lift of $f$. For a cellular map $f$ of CW-complexes let
\[
\fK(M)~:=~\fC(\wt{f}:C(\wt{M}) \to C(\wt{X}))_{*+1}
\]
be the algebraic mapping cone of the induced $\Z[\pi_1(X)]$-module chain map $\wt{f}$,
with homology $\Z[\pi_1(X)]$-modules
$$H_*(\fK(M))~=~K_*(M)~=~H_{*+1}(\wt{f}:\wt{M} \to \wt{X})~.$$
For $n \geqslant 1$ the map $f:M \to X$ is $n$-connected
if and only if $f_*:\pi_1(M) \cong \pi_1(X)$ and $K_r(M)=0$ for $r < n$,
in which case the Hurewicz map is an isomorphism:
\[
\pi_{n+1}(f)~=~\pi_{n+1}(\wt{f}) \to K_n(M)~=~H_{n+1}(\wt{f})~.
\]
By the theorem of J\,H\,C~Whitehead,
$f:M \to X$ is a homotopy equivalence if and only if
$f_*:\pi_1(M) \cong \pi_1(X)$ and $K_*(M)=0$ (if and only if
$\fK(M)$ is chain contractible).

Decompose the boundary of the $(n+1)$-disk as a union of upper and
lower $n$-disks:
\[
\partial D^{n+1}~=~S^n~=~D^n_+ \cup_{S^{n-1}} D^n_-~.
\]
Given a CW-complex $M$ and a cellular
map $\phi:D^n_+ \to M$ define a new CW-complex
\[
M'~=~(M \cup_{\partial \phi} D^n_-)\cup_{\phi \cup 1}D^{n+1}
\]
by attaching an $n$-cell and an $(n+1)$-cell, with
\[
\partial \phi~=~\phi\vert~:~S^{n-1} \to M~,~
\phi \cup 1~:~S^n~=~D^n_+ \cup_{S^{n-1}} D^n_- \to M
\cup_{\partial \phi}D^n_-~.
\]
The inclusion $M \subset M'$ is a homotopy equivalence called an \textbf{elementary expansion}. The
cellular based free $\Z[\pi_1(M)]$-module chain complexes fit into
a short exact sequence
\[
0 \to C(\wt{M}) \to C(\wt{M}') \to C(\wt{M}',\wt{M}) \to 0
\]
with
\[
\xymatrix{C(\wt{M}',\wt{M})~:~\cdots \ar[r] & 0
\ar[r] & \Z[\pi_1(M)] \ar[r]^-{\di{1}} & \Z[\pi_1(M)] \ar[r] & 0 \ar[r] & \cdots}
\]
concentrated in dimensions $n,n+1$. For a commutative diagram of cellular maps
\[
\xymatrix{D^n_+ \ar[r]^-{\di{\phi}} \ar[d] & M \ar[d]^-{\di{f}} \\
D^{n+1} \ar[r]^-{\di{\delta\phi}} & X}
\]
$f$ extends to a cellular map
\[
f'~=~(f\cup \delta\phi\vert_{D^n_-}) \cup \delta\phi~:~M'~=~(M \cup_{\partial \phi} D^n_-)\cup_{\phi \cup 1}D^{n+1} \to X
\]
which is also called an \textbf{elementary expansion}, and
there is defined a short exact sequence
of based free $\Z[\pi_1(X)]$-module chain complexes
\[
0 \to \fK(M) \to \fK(M') \to C(\wt{M}',\wt{M}) \to 0~.
\]

Recall the \textbf{Whitehead group} of a group $G$ is defined by
\[
\Wh(G)~:=~K_1(\Z[G])/\{\pm g \mid g\in G\}~.
\]
Suppose the CW-complexes $M,M',X$ are finite. The \textbf{Whitehead torsion} of a homotopy
equivalence $f:M \to X$ is
\[
\tau(f)~=~\tau(\fK(M)) \in \Wh(\pi_1(X))~.
\]
Homotopy equivalences $f:M \to X$ and
$f':M' \to X$ are \textbf{simple-homotopic} if
\[
\tau(f)~=~\tau(f') \in \Wh(\pi_1(X))~.
\]
This is equivalent to being able to obtain $f'$ from $f$ by a
finite sequence of elementary expansions and subdivisions and
their formal inverses. For details, see Cohen's book~\cite{Cohen_book}.

A \textbf{2-sided codimension 1 pair} $(X,Y \subset X)$ is a pair of spaces
such that the inclusion $Y=Y \times \{0\}\subset X$ extends to an
open embedding $Y \times \R \subset X$.
We say that a homotopy equivalence $f:M \to X$  \textbf{splits along $Y \subset
X$} if  the restrictions $f\vert:N=f^{-1}(Y) \to Y$, $f\vert:M - N \to X - Y$ are also homotopy equivalences.

In dealing with maps $f:M \to X$ and 2-sided codimension 1 pairs $(X,Y)$
we shall assume that $f$ is cellular and that both $(X,Y)$ and
$(M,N=f^{-1}(Y))$ are a 2-sided codimension 1 CW-pair.

A  2-sided codimension 1 CW-pair $(X,Y)$ is \textbf{$\pi_1$-injective} if $X,Y$ are connected and $\pi_1(Y) \to \pi_1(X)$ is injective.  As usual, there are two cases, according as to whether $Y$ separates $X$ or not:
\begin{enumerate}
\item[(A)] The \textbf{separating} case: $X- Y$ is disconnected, so
\[
X~=~X_1\cup_Y X_2
\]
with $X_1,X_2$ connected. By the Seifert-van Kampen theorem $$\pi_1(X)~=~\pi_1(X_1)*_{\pi_1(Y)}\pi_1(X_2)$$
 is the amalgamated free product determined by the injections $i_k:\pi_1(Y) \to \pi_1(X_k)$ ($k=1,2$).
\item[(B)] The \textbf{non-separating} case: $X- Y$ is connected, so
\[
X~=~X_1/\{y \sim ty \mid y \in Y\}
\]
for a connected space $X_1$ (a deformation retract of $X-Y$)
which contains two disjoint copies $Y \sqcup tY \subset X_1$
of $Y$. By the Seifert-van Kampen theorem
$$\pi_1(X)~=~\pi_1(X_1)*_{i_1,i_2}\{t\}$$
is the HNN extension determined by the injections $i_1,i_2:\pi_1(Y) \to \pi_1(X_1)$, with
$i_1(y)t=ti_2(y)$ ($y \in \pi_1(Y)$).
\end{enumerate}

\begin{rem} Let
$\wt{X}$ be the universal cover of $X$, and let $\ol{X}:=\wt{X}/\pi_1(Y)$, so that
for both types (A) and (B),  $(\ol{X},Y)$ is a $\pi_1$-injective 2-sided codimension 1 pair of the separating type (A), with $\ol{X}=\ol{X}^-\cup_Y\ol{X}^+$ for connected subspaces $\ol{X}^-,\ol{X}^+\subset \ol{X}$ such that
$$\pi_1(\ol{X})~=~\pi_1(\ol{X}^-)~=~\pi_1(\ol{X}^+)~=~\pi_1(Y)~.$$
Moreover for type (B), when $i_1,i_2$ are isomorphisms, the HNN extension simplifies to:
\[\begin{CD}
1 @>>> \pi_1(Y) @>>> \pi_1(X)~=~\pi_1(Y)\rtimes_{\alpha} \Z  @>>> \Z @>>> 1
\end{CD}\]
with automorphism $\alpha=(i_1)^{-1}i_2$ of $\pi_1(Y)$, studied originally by Farrell and Hsiang \cite{FarrellHsiang}.
\end{rem}

\emph{From now on, we shall only consider the separating case (A) of $X=X_1 \cup_Y X_2$.} Write
\begin{gather*}
\pi_1(X)~=~G~,~\pi_1(X_1)~=~G_1~,~\pi_1(X_2)~=~G_2~,~
\pi_1(Y)~=~H~,\\[1ex]
i_k~:~\Z[H] \to \Z[G_k]~=~\Z[H]\oplus \fB_k~,~\fB_k~=~\Z[G_k-H]
\end{gather*}
with $\fB_k$ free as both a right and a left $\Z[H]$-module, and
$$\Z[G]~=~\Z[G_1]*_{\Z[H]}\Z[G_2]~=~
\Z[H] \oplus \fB_1 \oplus \fB_2 \oplus \fB_1\fB_2  \oplus \fB_2\fB_1 \oplus \dots~.$$
Use the injections $i_k:H \to G_k$ to define covers
\[\ol{X}_1~=~\wt{X}_1/H \subset \ol{X}^-~,~
\ol{X}_2~=~\wt{X}_2/H \subset \ol{X}^+\]
such that $\ol{X}_1 \cap \ol{X}_2 = Y$ and
\begin{eqnarray*}
\wt{X} &=& \left(\bigcup\limits_{g_1 G_1 \in G/G_1}g_1\wt{X}_1\right) \cup
_{\left(\bigcup\limits_{h H \in G/H}h\wt{Y}\right)}
\left(\bigcup\limits_{g_2 G_2 \in G/G_2}g_2\wt{X}_2\right)\\
\ol{X} &=& \left(\bigcup\limits_{g_1 G_1 \in G/G_1}g_1\ol{X}_1\right) \cup
_{\left(\bigcup\limits_{h H \in G/H}hY\right)}
\left(\bigcup\limits_{g_2 G_2 \in G/G_2}g_2\ol{X}_2\right)
\end{eqnarray*}
with  $\wt{X}_k$ the universal cover of $X_k$, and $\wt{Y}$ the universal cover of $Y$.

Let $(f,g):(M,N) \to (X,Y)$ be a map of separating $\pi_1$-injective codimension 1 finite CW-pairs. This gives an exact sequence of based  free $\Z[H]$-module chain complexes
\begin{equation}\label{eqn:chainsplitting}\begin{CD}
0 @>>> \fK(N) @>>> \fK(\ol{M}) @>>> \fK(\ol{M}^-,N)\oplus \fK(\ol{M}^+,N) @>>> 0
\end{CD}\end{equation}
inducing a long exact sequence of homology modules
\[\xymatrix{
\cdots \ar[r] & K_r(N) \ar[r] & K_r(\ol{M}) \ar[r] & K_r(\ol{M}^+,N)\oplus K_r(\ol{M}^-,N) \ar[r] & K_{r-1}(N) \ar[r] & \cdots~.
}\]
Note that $f:M \to X$ is a homotopy equivalence if and
only if $f_*:\pi_1(M) \to \pi_1(X)$ is an isomorphism
and $\fK(\ol{M})$ is contractible. The map of pairs $(f,g):(M,N) \to (X,Y)$ is a split homotopy equivalence if and
only if any two of the chain complexes in \eqref{eqn:chainsplitting} are contractible, in which case the third chain complex is
also contractible.

Suppose $f:M \to X$ is a homotopy equivalence. Then
\[
K_*(\ol{M})~=~K_*(M)~=~0~,~ K_*(N)~=~K_{*+1}(\ol{M}^-,N) \oplus K_{*+1}(\ol{M}^+,N)~.
\]
We obtain an exact sequence of $\Z[H]$-module chain complexes
\begin{gather*}
0 \to \fK(\ol{M}_1,N) \to \fK(\ol{M}^-,N) \xrightarrow{~\di{\rho_1}~} \fK(\ol{M}^-,\ol{M}_1)~=~ \fB_1\otimes_{\Z[H]}\fK(\ol{M}^+,N) \to 0~\\
0 \to \fK(\ol{M}_2,N) \to \fK(\ol{M}^+,N) \xrightarrow{~\di{\rho_2}~} \fK(\ol{M}^+,\ol{M}_2)~=~ \fB_2\otimes_{\Z[H]}\fK(\ol{M}^-,N) \to 0~.
\end{gather*}
The pair $(\rho_1,\rho_2)$ of intertwined chain maps is \emph{chain homotopy nilpotent}, in the sense that the following chain map is a $\Z[G]$-module chain equivalence:
\begin{multline*}
\begin{pmatrix} 1 & \rho_2 \\ \rho_1 & 1 \end{pmatrix}~:~
\Z[G]\otimes_{\Z[H]}(\fK(\ol{M}^-,N)\oplus\fK(\ol{M}^+,N))\\[1ex]
\longrightarrow~ \Z[G]\otimes_{\Z[H]}(\fK(\ol{M}^-,N)\oplus\fK(\ol{M}^+,N))~.
\end{multline*}

\begin{dfn}
Let $x=(P_1,P_2,\rho_1,\rho_2)$ be an object of $\NIL(\Z[H];\fB_1,\fB_2)$.
\begin{enumerate}
\item
Let $x'=(P'_1,P'_2,\rho'_1,\rho'_2)$ be another object. We say $x$ and $x'$ are \textbf{equivalent} if
\[
[P_1] = [P'_1]~,~[P_2] = [P'_2] ~\in~ \wt{K}_0(\Z[H])~,~[x] = [x'] ~\in~ \wt{\Nil}_0(\Z[H];\fB_1,\fB_2)~,
\]
or equivalently
\begin{multline*}
[x']-[x] \in K_0(\Z)\oplus K_0(\Z) \subseteq \Nil_0(\Z[H];\fB_1,\fB_2)\\[1ex]
=~K_0(\Z[H])\oplus K_0(\Z[H]) \oplus \wt{\Nil}_0(\Z[H];\fB_1,\fB_2)
\end{multline*}
with $K_0(\Z) \oplus K_0(\Z)$ the subgroup generated by $(\Z[H],0,0)$ and $(0,\Z[H],0,0)$.
\item
Let $k=1$ or $2$. Let $y_k \in \mathrm{ker}(\rho_k)$ generate
a direct summand $\langle y_k \rangle \subseteq P_k$. Define an object $x'$ in $\NIL(\Z[H];\fB_1,\fB_2)$ by
$$x'~=~(P'_1,P'_2,\rho'_1,\rho'_2)~=~\begin{cases} (P_1/\langle y_1 \rangle,P_2,[\rho_1],[\rho_2])&{\rm if}~k=1 \\
(P_1,P_2/\langle y_2 \rangle,[\rho_1],[\rho_2])&{\rm if}~k=2
\end{cases}$$
with an exact sequence  in $\NIL(\Z[H];\fB_1,\fB_2)$
$$\begin{cases}
\xymatrix@C+10pt{0 \ar[r] & (\Z[H],0,0,0) \ar[r]^-{\di{(y_1,0)}} & x \ar[r] &x' \ar[r] &  0}\\[1ex]
\xymatrix@C+10pt{0 \ar[r] & (0,\Z[H],0,0) \ar[r]^-{\di{(0,y_2)}} & x \ar[r] &x' \ar[r] &  0}~.
\end{cases}$$
Thus $x'$ is equivalent to $x$, obtained by the \textbf{algebraic cell-exchange}
which \textbf{kills} $y_k \in P_1 \oplus P_2$.
\end{enumerate}
\end{dfn}

It can be shown that two objects $x$ and $x'$ in $\NIL(\Z[H];\fB_1,\fB_2)$ are equivalent if and only if $x'$ can be obtained from $x$ by a finite sequence of isomorphisms, algebraic cell-exchanges, and their formal inverses.

Geometric cell-exchanges (called \textbf{surgeries} in \cite{Waldhausen_1969}) determine algebraic cell-exchanges.
In the highly-connected case, algebraic and geometric cell-exchanges occur in tandem:

\begin{thm} [\cite{Waldhausen_1969}]\label{W}
Let $(f,g):(M,N) \to (X,Y)$ be a map of separating $\pi_1$-injective codimension 1 finite CW-pairs,
with $f:M \to X$ a homotopy equivalence. Write $X=X_1\cup_YX_2$ with induced amalgam $\pi_1(X)=G=G_1*_HG_2$ of fundamental groups.

\noindent\textbf{(i)}
Let $k=1,2$. Suppose for some $n \geqslant 0$ that we are given a map
\[
(\phi,\partial \phi)~:~(D^{n+1},S^n) ~\longrightarrow~ (M_k,N)
\]
and a null-homotopy of pairs
\[
(\theta,\partial \theta)~:~(f\vert_{M_k} \circ \phi, g \circ \partial \phi)~\simeq~(*,*)~:~(D^{n+1},S^n) ~\longrightarrow~	 (X_k,Y)~.
\]
Assume they represent an element in $\mathrm{ker}(\rho_k)$ (with $\epsilon=-$ if $k=1$; $\epsilon=+$ if $k=2$):
\begin{multline*}
y_k~=~[\phi,\theta] \in \mathrm{im}(K_{n+1}(\ol{M}_k,N) \to K_{n+1}(\ol{M}^{\epsilon},N))\\[1ex]
=~\mathrm{ker}(\rho_k:K_{n+1}(\ol{M}^{\epsilon},N) \to
\fB_k \otimes_{\Z[H]}K_{n+1}(\ol{M}^{-\epsilon},N)) \subseteq K_n(N)~.
\end{multline*}
The map $(f,g)$ extends to the map of codimension 1 pairs
\begin{multline*}
(f',g')~:=~((f \cup f\vert_{M_k}\circ \phi)\cup \theta, g \cup \partial \theta)~:\\[1ex]
(M',N')~:=~((M\cup_{\partial \phi}D^{n+1}) \cup_{\phi \cup 1} D^{n+2},N \cup_{\partial \phi}D^{n+1}) ~\longrightarrow~ (X,Y)
\end{multline*}
where the new $(n+2)$-cell has attaching map
\[
\phi \cup 1~:~\partial D^{n+2}~=~D^{n+1} \cup_{S^n}D^{n+1} ~\longrightarrow~ M \cup_{\partial \phi}D^{n+1}~.
\]
The homological effect on $(f,g)$ of this \textbf{geometric cell-exchange} is no change in
\[\begin{cases}
K_r(\ol{M}'^{\epsilon},N') ~=~ K_r(M^{\epsilon},N) & \text{for}~ r\neq n+1,n+2~,\\[1ex]
K_r(\ol{M}'^{-\epsilon},N') ~=~ K_r(M^{-\epsilon},N) & \text{for all}~ r \in \Z
\end{cases}\]
except there is a five-term exact sequence
\[\begin{CD}
0 @>>> K_{n+2}(\ol{M}^\epsilon,N) @>>>
K_{n+2}(\ol{M}'^\epsilon,N')
@>>> \Z[H]\\[1ex]
@>{\di{y_k}}>> K_{n+1}(\ol{M}^\epsilon,N)
@>>> K_{n+1}(\ol{M}'^\epsilon,N') @>>> 0~.
\end{CD}\]
The inclusion $h:M \subset M'$ is a simple homotopy equivalence with
$(f,g) \simeq (f'h,g'h\vert_N)$.

\noindent\textbf{(ii)}
Suppose for some $n \geqslant 2$ that $K_r(N)=0$ for all $r \neq n$. Then
\[
K_r(\ol{M}^-,N)~=~0~=~K_r(\ol{M}^+,N) \quad~\text{for all}~r \neq n+1~,
\]
and $K_n(N)$ is a stably finitely generated   free $\Z[H]$-module.
Moreover, we may define an object $x$ in $\NIL(\Z[H];\fB_1,\fB_2)$ by
\[
x ~:=~ (K_{n+1}(\ol{M}^-,N),K_{n+1}(\ol{M}^+,N),\rho_1,\rho_2)
\]
whose underlying modules satisfy
\begin{gather*}
[K_{n+1}(\ol{M}^-,N)]+[K_{n+1}(\ol{M}^+,N)]~=~[K_n(N)]~=~0 ~\in~ \wt{K}_0(\Z[H])~,\\[1ex]
[\Z[G_k]\otimes_{\Z[H]}K_{n+1}(\ol{M}^\epsilon,N)]~=~0 ~\in~ \wt{K}_0(\Z[G_k])\quad~(k=1,2)~.
\end{gather*}
If $(f',g'):(M',N') \to (X,Y)$ is obtained from $(f,g)$ by
a geometric cell-exchange killing an element $y_k \in K_{n+1}(\ol{M}^{\epsilon},N)$ $((k,\epsilon)=(1,-)$ or $(2,+))$ which generates a direct summand $\langle y_k \rangle \subseteq   K_{n+1}(\ol{M}^{\epsilon},N)$,
 then the corresponding object in $\NIL(\Z[H];\fB_1,\fB_2)$
\[
x'~:=~(K_{n+1}(\ol{M}'^-,N'),K_{n+1}(\ol{M}'^+,N'),\rho'_1,\rho'_2)
\]
is obtained from $x$ by an algebraic cell-exchange. Since $\pi_{n+1}(\ol{M}_k,N)=K_{n+1}(\ol{M}_k,N)$ by the relative Hurewicz theorem, there is a one-one correspondence between algebraic and geometric cell-exchanges killing elements $y_k$ generating direct summands $\langle y_k \rangle$.

\noindent\textbf{(iii)}
For any $n\geqslant 2$ it is possible to modify the
given $(f,g)$ by a finite sequence of geometric cell-exchanges  and
their formal inverses to obtain a pair (also denoted by $(f,g)$)
such that $K_r(N)=0$ for all $r \neq n$ as in {\rm (ii)}, and
hence a canonical equivalence class of  nilpotent objects
$x=(P_1,P_2,\rho_1,\rho_2)$ in $\NIL(\Z[H];\fB_1,\fB_2)$ such that
\[
[P_1]+[P_2]=0 \in \wt{K}_0(\Z[H])~,~
[\Z[G_k]\otimes_{\Z[H]}P_k]~=~0 \in \wt{K}_0(\Z[G_k])
\]
with $P_1:=K_{n+1}(\ol{M}^-,N)$, $P_2:=K_{n+1}(\ol{M}^+,N)$. Any $x'$ in the equivalence class of $x$ is realized by a map $(f',g'):(M',N')
\to (X,Y)$ with $f'$ simple-homotopic to $f$. The splitting obstruction of $f$ is the image of the Whitehead torsion
$\tau(f) \in \Wh(G)$, namely:
\begin{multline*}
\partial(\tau(f))~=~([P_1],[x])~=~([P_1],[P_1,P_2,\rho_1,\rho_2])\\[1ex]
~\in~ \mathrm{ker}(\wt{K}_0(\Z[H]) \to \wt{K}_0(\Z[G_1])\oplus \wt{K}_0(\Z[G_2])) \;\oplus\; \wt{\Nil}_0(\Z[H];\fB_1,\fB_2)~.
\end{multline*}
Thus $f$ is simply homotopic to a  split homotopy equivalence if and only if $\partial(\tau(f))=0$, if and only if $x$
is equivalent to $0$.

\noindent\textbf{(iv)}
The Whitehead group of $G=G_1*_HG_2$ fits into an exact sequence
\begin{multline*}\begin{CD}
\Wh(H) @>>> \Wh(G_1)\oplus \Wh(G_2) @>>> \Wh(G)\\[1ex]
@>{\partial}>> \wt{K}_0(\Z[H])\oplus \wt{\Nil}_0(\Z[H];\fB_1,\fB_2) @>>> \wt{K}_0(\Z[G_1])\oplus \wt{K}_0(\Z[G_2])~.
\end{CD}\end{multline*}
Furthermore, the homomorphism
\[
\partial ~:~ \Wh(G) ~\longrightarrow~ \wt{K}_0(\Z[H]) \oplus \wt{\Nil}_0(\Z[H];\fB_1,\fB_2)
~;~\tau(f) \longmapsto ([P_1],[P_1,P_2,\rho_1,\rho_2])
\]
satisfies that $\mathrm{proj}_2 \circ \partial: \Wh(G) \to \wt{\Nil}_0(\Z[H];\fB_1,\fB_2)$ is an epimorphism split by
\[
\iota~:~ \wt{\Nil}_0(\Z[H];\fB_1,\fB_2)  ~\longrightarrow~ \Wh(G)~;~
[P_1,P_2,\rho_1,\rho_2] ~\longmapsto~ \begin{bmatrix}
1 & \rho_2 \\ \rho_1 & 1\end{bmatrix}~.
\]
\end{thm}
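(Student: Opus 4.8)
The plan is to reconstruct Waldhausen's codimension~1 splitting obstruction theory, organized around the Mayer--Vietoris presentation of the cellular chain complex of the pullback cover $\ol{M}$ and the algebraic $K$-theory decomposition of $\Z[G]=\Z[G_1]*_{\Z[H]}\Z[G_2]$ quoted above. For part~(i) I would argue geometrically: attaching the $(n+1)$-cell to $M$ along $\partial\phi\colon S^n\to N$ yields $M\cup_{\partial\phi}D^{n+1}$ with $N'=N\cup_{\partial\phi}D^{n+1}$ as its codimension~1 subcomplex, and the subsequent $(n+2)$-cell, whose attaching sphere $\partial D^{n+2}=D^{n+1}\cup_{S^n}D^{n+1}$ is the old disk $\phi$ glued to the new cell, provides a cancelling pair, so $M\subset M'$ is an elementary expansion, hence a simple homotopy equivalence, and $(f,g)\simeq(f'h,g'h|_N)$ by the very construction of $f'$ out of $f$, $f|_{M_k}\circ\phi$, and the null-homotopy $\theta$. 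The homological bookkeeping is then a diagram chase: the new cells contribute a based free rank-one summand in degrees $n+1$ and $n+2$ to $\fK(\ol{M}^\epsilon,N)$ on the $k$-side only, with boundary map exactly $y_k\colon\Z[H]\to K_{n+1}(\ol{M}^\epsilon,N)$, so comparing the short exact sequences of chain complexes for $(M,N)$ and $(M',N')$ yields the stated five-term exact sequence and leaves all other relative kernel modules unchanged.

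For part~(ii), assume $K_r(N)=0$ for $r\neq n$ with $n\geqslant 2$. Feeding this into the Mayer--Vietoris long exact sequence and using $K_*(\ol{M})=K_*(M)=0$ forces $K_r(\ol{M}^\pm,N)=0$ for $r\neq n+1$ and $K_n(N)\cong K_{n+1}(\ol{M}^-,N)\oplus K_{n+1}(\ol{M}^+,N)$; since each $\fK(\ol{M}^\pm,N)$ is a finite based free $\Z[H]$-complex with homology concentrated in one degree, the relative kernels are stably finitely generated projective, with classes in $\wt{K}_0(\Z[H])$ satisfying the displayed identities, while the induced classes over $\Z[G_k]$ vanish because $\fK(\ol{M}_k,N)$ is similarly one-dimensional and, after the evident Mayer--Vietoris comparison, becomes a based free $\Z[G_k]$-complex with acyclic homology. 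The object $x$ is the quadruple of these modules equipped with the maps $\rho_1,\rho_2$ of the already-established chain-homotopy nilpotent pair. To match algebraic with geometric cell-exchanges I would invoke the relative Hurewicz theorem: $(\ol{M}_k,N)$ is $n$-connected with $n+1\geqslant 3$ and $\pi_1(N)\xrightarrow{\cong}\pi_1(\ol{M}_k)$, so $\pi_{n+1}(\ol{M}_k,N)\cong K_{n+1}(\ol{M}_k,N)=\ker(\rho_k)$, every $y_k\in\ker(\rho_k)$ spanning a direct summand is realized by a map of pairs $(\phi,\partial\phi)$, and the required null-homotopy $\theta$ exists since $f$ is a homotopy equivalence, so the image of $y_k$ in $\pi_{n+1}(X_k,Y)$ is zero; part~(i) then shows the geometric exchange realizes exactly the algebraic one.

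For part~(iii), I would first do handle-trading in the low dimensions---the codimension~1 analogue of making a normal map highly connected---modifying $(f,g)$ by geometric cell-exchanges and their formal inverses to reach the situation of~(ii) with $K_r(N)=0$ for $r\neq n$; a standard argument shows the resulting equivalence class of nilpotent objects in $\NIL(\Z[H];\fB_1,\fB_2)$ is independent of all choices, hence well-defined up to the equivalence relation generated by isomorphism and algebraic cell-exchange, and part~(ii) realizes every representative by a map simple-homotopic to $f$. Identifying $\partial$ with the boundary map in Waldhausen's decomposition $\Wh(G)\to\wt{K}_0(\Z[H])\oplus\wt{\Nil}_0(\Z[H];\fB_1,\fB_2)$ then gives the splitting obstruction $\partial(\tau(f))=([P_1],[x])$, which vanishes if and only if $x$ is equivalent to $0$ if and only if, by~(ii), $f$ is simply homotopic to a split homotopy equivalence. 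Part~(iv) is the Whitehead-group form of Waldhausen's amalgamated-product decomposition, already quoted above; the section $\iota$ is Waldhausen's split injection $\sigma_A$ followed by the projection $K_1(\Z[G])\to\Wh(G)$, sending $[P_1,P_2,\rho_1,\rho_2]$ to the class of $\SmMatrix{1&\rho_2\\\rho_1&1}$, and one checks directly that $\mathrm{proj}_2\circ\partial\circ\iota=\mathrm{id}$.

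The main obstacle I anticipate is part~(iii): carrying out the low-dimensional handle-trading carefully enough that one really can reach $K_r(N)=0$ for $r\neq n$ while keeping $f$ a homotopy equivalence and controlling the fundamental groups, and then proving that the nilpotent object so produced is genuinely well-defined up to the stated equivalence, independent of every geometric choice. Everything else reduces either to a diagram chase (parts~(i) and~(ii)), to an application of the relative Hurewicz theorem, or to a direct appeal to the already-quoted algebraic $K$-theory decompositions.
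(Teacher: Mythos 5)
The paper does not give its own proof of this theorem: it is quoted verbatim (with \fullref{W} carrying the citation to Waldhausen's 1969 paper) as the foundation of the geometric splitting-obstruction theory that \fullref{codim1} assembles. Your reconstruction follows the same route Waldhausen takes and that the surrounding text of the paper implicitly tracks: the elementary-expansion picture for part~(i), the Mayer--Vietoris bookkeeping together with the relative Hurewicz theorem for part~(ii), low-dimensional handle-trading plus well-definedness of the equivalence class for part~(iii), and the $K$-theoretic decomposition for part~(iv), with $\iota$ recognized as $\sigma_A$ reduced to $\Wh(G)$. So there is no divergence of method to report; you are in effect sketching the cited source rather than offering an alternative.

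Two points deserve a caveat. First, in part~(ii) you say the null-homotopy $\theta$ ``exists since $f$ is a homotopy equivalence, so the image of $y_k$ in $\pi_{n+1}(X_k,Y)$ is zero'' --- strictly, $\theta$ is part of the data packaged into the kernel class $y_k=[\phi,\theta]$ (as the paper's definition of $K_*$ via the mapping cone $\fK$ makes clear), so its existence is built into the relative Hurewicz identification $\pi_{n+1}(\ol M_k,N)\cong K_{n+1}(\ol M_k,N)$ rather than an extra consequence of $f$ being an equivalence; the phrasing should be tightened. Second, you correctly flag part~(iii) as the real work: the independence of the resulting nilpotent object from the cell-exchange choices, and the identification of $\partial(\tau(f))$ with $([P_1],[x])$ under Waldhausen's boundary map, is exactly the content of \cite{Waldhausen_1969} (see also \cite{Waldhausen_1973,Waldhausen_Rings}) and is not something a diagram chase alone delivers; a complete proof would need to reproduce that argument rather than invoke ``a standard argument.''
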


\begin{dfn}\label{semisplit} Let $(X,Y)$ be a separating $\pi_1$-injective codimension 1 finite CW-pair.
A homotopy equivalence $f:M \to X$ from a finite CW-complex $M$  is \textbf{semi-split along $Y \subset X$} if
$f$ is simple homotopic to a map (also denoted by $f$) such that  for the corresponding map of pairs
$(f,g):(M,N) \to (X,Y)$ the relative homology kernel $\Z[H]$-modules
\[K_*(\ol{M}_2,N)~=~H_{*+1}((\wt{M}_2,\wt{N})
\to (\wt{X}_2,\wt{Y}))\]
vanish, which is equivalent to the induced $\Z[H]$-module morphisms
\[
\rho_2~:~K_*(\ol{M}^+,N) \longra K_*(\ol{M}^+,\ol{M}_2)
= \Z[G_2-H]\otimes_{\Z[H]} K_*(\ol{M}^-,N) ~,
\]
being isomorphisms. Equivalently, $f$ is semi-split along $Y$ if there is a semi-split object $x=(P_1,P_2,\rho_1,\rho_2)$
in the canonical  equivalence class of \ref{W}, that is, with $\rho_2:P_2 \to \fB_1P_1$ a $\Z[H]$-module isomorphism.
\end{dfn}

In particular, a split homotopy equivalence $f$ of separating pairs is semi-split.

\begin{thm}\label{Thm_TopSemisplit}
Let $(X,Y)$ be a separating $\pi_1$-injective codimension 1 finite CW-pair, with
$X=X_1\cup_YX_2$.  Suppose  that $H=\pi_1(Y)$ is a finite-index subgroup of $G_2=\pi_1(X_2)$. Every homotopy equivalence $f:\, M \to X$ with $M$ a finite CW-complex
is simple-homotopic to a homotopy equivalence  which is semi-split along $Y$.
\end{thm}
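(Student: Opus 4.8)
The plan is to prove that the canonical equivalence class of nilpotent objects associated to $f$ by \fullref{W}(iii) always contains a semi-split object; by \fullref{semisplit} this is exactly what it means for $f$ to be semi-split along $Y$, and it is manifestly a simple-homotopy invariant of $f$, so it gives the theorem. First I would apply \fullref{W}(iii): after a simple homotopy followed by finitely many geometric cell-exchanges we may assume $K_r(N)=0$ for all $r\neq n$, for some $n\geqslant 2$, and obtain a representative
\[
x~=~(P_1,P_2,\rho_1,\rho_2)~\in~\NIL(\Z[H];\fB_1,\fB_2)~,\qquad \fB_k~=~\Z[G_k-H]~,
\]
of the canonical class, with $P_1=K_{n+1}(\ol{M}^-,N)$ and $P_2=K_{n+1}(\ol{M}^+,N)$. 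I record two facts from \fullref{W} that I will need: $[P_1]+[P_2]=0\in\wt{K}_0(\Z[H])$; and, since $f$ is a homotopy equivalence, $\partial(\tau(f))=([P_1],[x])$ lies in $\ker\big(\wt{K}_0(\Z[H])\to\wt{K}_0(\Z[G_1])\oplus\wt{K}_0(\Z[G_2])\big)\oplus\wt{\Nil}_0$, so in particular $[\Z[G_2]\otimes_{\Z[H]}P_1]=0\in\wt{K}_0(\Z[G_2])$.

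Next I would produce the candidate. Since $H$ has finite index, say $d$, in $G_2$, the bimodule $\fB_2=\Z[G_2-H]$ is finitely generated free of rank $d-1$ as a left $\Z[H]$-module, so \fullref{Thm_HigherDKR}(1) (equivalently \fullref{maink}) applies directly, with no colimit needed. Put
\[
x''~:=~i\big(j(x)\big)~=~(P_1,\;\fB_2P_1,\;\rho_2\circ\rho_1,\;1)~.
\]
This is a genuine object of $\NIL(\Z[H];\fB_1,\fB_2)$ because $\fB_2P_1$ is finitely generated projective over $\Z[H]$; it is semi-split since $\rho''_2=1\colon \fB_2P_1\to\fB_2P_1$ is an isomorphism; and $[x'']=i_*j_*[x]=[x]\in\wt{\Nil}_0(\Z[H];\fB_1,\fB_2)$, because $i_*$ and $j_*$ are mutually inverse on reduced $\Nil$-groups.

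It then remains to verify that $x''$ lies in the canonical equivalence class of $x$. Recall that two objects of $\NIL(\Z[H];\fB_1,\fB_2)$ are equivalent precisely when their classes agree in $\wt{K}_0(\Z[H])\times\wt{K}_0(\Z[H])$ and in $\wt{\Nil}_0(\Z[H];\fB_1,\fB_2)$; since the $\wt{\Nil}_0$-components of $x$ and $x''$ already agree and the first modules coincide, I only need $[\fB_2P_1]=[P_2]\in\wt{K}_0(\Z[H])$. This is where the finite-index hypothesis is used a second time, via transfer: the restriction homomorphism $\wt{K}_0(\Z[G_2])\to\wt{K}_0(\Z[H])$ is defined because $\Z[G_2]$ is free of rank $d$ as a left $\Z[H]$-module, and applying it to the relation $[\Z[G_2]\otimes_{\Z[H]}P_1]=0$ and using the $\Z[H]$-bimodule decomposition $\Z[G_2]=\Z[H]\oplus\fB_2$ gives
\[
0~=~\big[(\Z[G_2]\otimes_{\Z[H]}P_1)\big|_{\Z[H]}\big]~=~[P_1]+[\fB_2P_1]~\in~\wt{K}_0(\Z[H])~,
\]
whence $[\fB_2P_1]=-[P_1]=[P_2]$ by $[P_1]+[P_2]=0$. (One checks similarly that $[\Z[G_2]\otimes_{\Z[H]}\fB_2P_1]=(d-1)[\Z[G_2]\otimes_{\Z[H]}P_1]=0$ in $\wt{K}_0(\Z[G_2])$, consistent with $x''$ being realizable.) Hence $x''$ is a semi-split object in the canonical equivalence class of $f$, realized by \fullref{W}(iii) by a homotopy equivalence simple-homotopic to $f$; by \fullref{semisplit}, $f$ is semi-split along $Y$.

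I expect the one substantive point to be the $\wt{K}_0$-matching $[\fB_2P_1]=[P_2]$ in the third paragraph. The evident semi-split representative $i(j(x))$ of the $\wt{\Nil}$-class carries $\fB_2P_1$, rather than a module with the $K$-theory class of $P_2$, as its second module, so a priori it need not sit in the canonical equivalence class; it is precisely the assumption $|G_2:H|<\infty$ --- needed both to make the Nil-Nil theorem applicable and to supply the transfer map that forces $[\fB_2P_1]=[P_2]$ --- that repairs this. Everything else is bookkeeping on top of \fullref{maink} and Waldhausen's codimension $1$ splitting theory recalled in \fullref{W}.
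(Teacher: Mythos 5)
Your proof is correct and follows essentially the same route as the paper: form the semi-split object $x''=(P_1,\fB_2 P_1,\rho_2\circ\rho_1,1)=i(j(x))$, use the Nil-Nil Theorem to match the $\wt{\Nil}_0$-classes, and use the finite-index hypothesis twice --- once so that $\fB_2$ is finitely generated projective over $\Z[H]$ (making $i$ defined), and once via restriction along $\Z[H]\subset\Z[G_2]$ applied to $[\Z[G_2]\otimes_{\Z[H]}P_1]=0$ to force $[\fB_2 P_1]=[P_2]\in\wt K_0(\Z[H])$ --- exactly as in the paper. The only spot worth flagging is the parenthetical aside $[\Z[G_2]\otimes_{\Z[H]}\fB_2 P_1]=(d-1)[\Z[G_2]\otimes_{\Z[H]}P_1]$, which does hold here but only because $\Z[G_2]\otimes_{\Z[H]}P_1$ is stably free (so that $\Z[G_2]\otimes_{\Z[H]}\mathrm{res}$ multiplies the class by $d$); the more direct route to $[\Z[G_2]\otimes_{\Z[H]}\fB_2 P_1]=0$ is just to induce up the already-established identity $[\fB_2 P_1]=[P_2]$ and apply \fullref{W}(iii).
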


\begin{proof}
Let $x=(P_1,P_2,\rho_1,\rho_2)$ represent the canonical equivalence class of objects in $\NIL(\Z[H];\fB_1,\fB_2)$ associated to  $f$ in \fullref{W}(ii).
Since $H$ is of finite index in $G_2$, as in the proof of \fullref{Thm_HigherDKR}, we can define a semi-split object
\[
x'' ~:=~ (P_1,\fB_2 P_1,\rho_2\circ \rho_1,1)
\]
satisfying
\[
[x'']-[x]~=~[0,\fB_2 P_1,0,0]-[0,P_2,0,0] ~\in~ \Nil_0(\Z[H];\fB_1,\fB_2)~.
\]
By \fullref{W}(iii), the direct sum
\[
\fB_2 P_1 \oplus P_1~=~(\Z[G_2-H]\otimes_{\Z[H]}P_1) \oplus P_1~=~\Z[G_2] \otimes_{\Z[H]}P_1
\]
is a stably finitely generated free $\Z[G_2]$-module. Since $\Z[G_2]$ is
a finitely generated  free $\Z[H]$-module, $\fB_2 P_1 \oplus P_1$ is a stably finitely generated free $\Z[H]$-module.  So
\[
[\fB_2 P_1]-[P_2]~=~[\fB_2 P_1]+[P_1]~=~[\Z[G_2]\otimes_{\Z[H]}P_1]~=~0 \in \wt{K}_0(\Z[H])~.
\]
Therefore $x$ is equivalent to $x''$.
Thus, by \fullref{W}(iii), there is a homotopy equivalence $f''\,:M'' \to X$ simple-homotopic to $f$ realizing $x''$; note it is semi-split.
\end{proof}

\subsection*{Acknowledgements}\label{ackref}

We would like to thank the participants of the workshop \emph{Nil
Phenomena in Topology} (14--15 April 2007, Vanderbilt University),
where our interests intersected and motivated the development of
this paper.  We are grateful to the referee for making helpful comments and asking perceptive questions.  Moreover, Chuck Weibel helped us to formulate the
filtered colimit hypothesis in \fullref{maink}, and Dan Ramras
communicated the concept of almost-normal subgroup in \fullref{Defn_almostnormal} to the second-named author.

\bibliographystyle{gtart}
\bibliography{dkr_agt}
\end{document}